\documentclass[12pt]{amsart}
\usepackage{amssymb}
\usepackage{eucal}
\usepackage[total={6.25truein,9truein},centering]{geometry} % To accommodate both Letter and A4 paper
\usepackage{hyperref}

\setlength{\headsep}{.25truein}
\setlength{\footskip}{.25truein}

\newtheorem{theorem}[equation]{Theorem}
\newtheorem{lemma}[equation]{Lemma}
\newtheorem{proposition}[equation]{Proposition}
\newtheorem{corollary}[equation]{Corollary}

\newtheorem*{theorem:hypersum}{Theorem~\ref{T:hypersum}}
\newtheorem*{theorem:Anderson}{Theorem~\ref{T:Anderson}}
\newtheorem*{theorem:Thakur}{Theorem~\ref{T:Thakur}}

\theoremstyle{definition}

\newtheorem{example}[equation]{Example}
\newtheorem*{acknowledgments}{Acknowledgments}

\theoremstyle{remark}
\newtheorem{remark}[equation]{Remark}

\numberwithin{equation}{section}

\allowdisplaybreaks

\newcommand{\FF}{\mathbb{F}}
\newcommand{\ZZ}{\mathbb{Z}}

\newcommand{\KK}{\mathbb{K}}

\newcommand{\bs}{\mathbf{s}}

\newcommand{\bw}{\mathbf{w}}
\newcommand{\bx}{\mathbf{x}}

\newcommand{\by}{\mathbf{y}}
\newcommand{\bz}{\mathbf{z}}

\newcommand{\cE}{\mathcal{E}}
\newcommand{\cH}{\mathcal{H}}
\newcommand{\cI}{\mathcal{I}}

\newcommand{\cN}{\mathcal{N}}

\newcommand{\cP}{\mathcal{P}}
\newcommand{\cS}{\mathcal{S}}

\newcommand{\cW}{\mathcal{W}}

\DeclareMathOperator{\Mat}{Mat}

\DeclareMathOperator{\Id}{Id}

\DeclareMathOperator{\sz}{sz}

\DeclareMathOperator{\wt}{wt}

\newcommand{\oi}{\overline{\imath}}

\newcommand{\oK}{\mkern2.5mu\overline{\mkern-2.5mu K}}

\newcommand{\ox}{\overline{x}}
\newcommand{\oy}{\overline{y}}
\newcommand{\ozero}{\overline{0}}

\newcommand{\uell}{\underline{\ell}}
\newcommand{\ueta}{\underline{\eta}}

\newcommand{\sep}{\mathrm{sep}}

\newcommand{\pd}{\partial}

\newcommand{\power}[2]{{#1 [[ #2 ]]}}
\newcommand{\laurent}[2]{{#1 (( #2 ))}}
\newcommand{\brac}[2]{\genfrac{\{}{\}}{0pt}{}{#1}{#2}}

\newcommand{\assign}{\mathrel{\vcenter{\baselineskip0.5ex \lineskiplimit0pt
                     \hbox{\scriptsize.}\hbox{\scriptsize.}}}%
                     =}
\newcommand{\rightassign}{=\mathrel{\vcenter{\baselineskip0.5ex \lineskiplimit0pt
                     \hbox{\scriptsize.}\hbox{\scriptsize.}}}}

\begin{document}

\title[Hyperderivative power sums and Carlitz multiplication coefficients]{Hyperderivative power sums, Vandermonde matrices, and Carlitz multiplication coefficients}

%    Information for first author
\author{Matthew A. Papanikolas}
\address{Department of Mathematics, Texas A{\&}M University, College Station,
TX 77843, U.S.A.}
\email{papanikolas@tamu.edu}

%    Information for second author

%   General info
\thanks{This project was partially supported by NSF Grant DMS-1501362}

\subjclass[2010]{Primary 11G09; Secondary 05E05, 11M38, 11T55}

\date{October 27, 2020}

\begin{abstract}
We investigate interconnected aspects of hyperderivatives of polynomials over finite fields, $q$-th powers of polynomials, and specializations of Vandermonde matrices.  We construct formulas for Carlitz multiplication coefficients using hyperderivatives and symmetric polynomials, and we prove identities for hyperderivative power sums in terms of specializations of the inverse of the Vandermonde matrix.  As an application of these results we give a new proof of a theorem of Thakur on explicit formulas for Anderson's special polynomials for log-algebraicity on the Carlitz module.  Furthermore, by combining results of Pellarin and Perkins with these techniques, we obtain a new proof of Anderson's theorem in the general case.
\end{abstract}

\keywords{hyperderivatives, polynomial power sums, Carlitz module, symmetric polynomials, log-algebraicity}

\dedicatory{In honor and memory of David Goss}

\maketitle

\section{Introduction} \label{S:Intro}

Letting $A= \FF_q[\theta]$ be the polynomial ring in one variable over a finite field and $K = \FF_q(\theta)$ be its fraction field, it is natural to consider polynomial power sums in~$K$,
\begin{equation}
  S_i(k) = \sum_{a \in A_{i+}} a^k, \quad k \in \ZZ,
\end{equation}
where $A_{i+}$ denotes the finite set of monic polynomials in $A$ of degree~$i$.  Vanishing results for these series and special value formulas were obtained early on by Carlitz~\cite{Carlitz35}, \cite{Carlitz37}, and Lee~\cite{Lee43}, and they have been studied by many researchers in intervening years for both their intrinsic interest and their applications to values of Goss $L$-functions, multizeta values, and Anderson's log-algebraic identities (e.g., see~\cite{AnglesPellarin14}, \cite{Gekeler88a}, \cite{Goss78}, \cite{Thakur92}, \cite{Thakur}, \cite{Thakur09}).

As a variant on these types of problems the present paper considers hyperderivative power sums of the form
\begin{equation}
  H_i(j_1, \dots, j_\ell; k_1, \dots, k_{\ell}) = \sum_{a \in A_{i+}} \pd_{\theta}^{j_1}(a)^{k_1} \cdots \pd_{\theta}^{j_\ell}(a)^{k_{\ell}},
\end{equation}
for $j_1, \dots, j_{\ell} \geqslant 0$ and $k_1, \dots, k_{\ell} \in \ZZ$, where $\pd_{\theta}^j(a)^{k}$ is the $k$-th power of the $j$-th hyperderivative of $a$ with respect to~$\theta$ (see~\S\ref{S:Prelim} for the definitions).  Hyperderivatives have become increasingly important in research in function field arithmetic in recent years (e.g., see~\cite{BosserPellarin08}, \cite{BrownawellDenis00}, \cite{Conrad00}, \cite{Goss17}, \cite{Jeong00}, \cite{Jeong11}, \cite{Maurischat18}, \cite{PZeng17}).  These sums appear in log-algebraic identities for the Carlitz module (see~\S\ref{S:Thakur}--\ref{S:Anderson}), as well as for its tensor powers~\cite{PLogAlg}.

In order to investigate these sums and their applications, we first observe that there is substantial intertwining among $q$-th powers of polynomials, hyperderivatives, and symmetric polynomials.  For example, in \S\ref{S:Vandermonde} we show that for $a \in A$ of degree at most~$i$, if we write $a(t)$ for the polynomial obtained by replacing $\theta$ by another variable $t$, then
\begin{equation}
  \begin{pmatrix} a(t) \\ a(t)^q \\ \vdots \\ a(t)^{q^i} \end{pmatrix}
  = V_i \bigl( t-\theta, t^q - \theta, \dots, t^{q^i} -\theta \bigr)
  \begin{pmatrix} a \\ \pd_{\theta}^1(a) \\ \vdots \\ \pd_{\theta}^i(a) \end{pmatrix},
\end{equation}
where $V_i$ is the Vandermonde matrix in $i+1$ variables.  Using this identity, we show that for $0 \leqslant j \leqslant i$ and $k \geqslant 0$,
\begin{equation}
  \pd_{\theta}^j(a)^{q^k} = \sum_{\ell=0}^i \kappa_{ij\ell} \bigl( t - \theta^{q^k}, t^q - \theta^{q^k}, \dots,
  t^{q^i} - \theta^{q^k} \bigr) a(t)^{q^{\ell}},
\end{equation}
where $\kappa_{ij\ell}$ is the entry of $V_i^{-1}$ in row~$j$ and column~$\ell$ and is completely explicit (see~\eqref{E:kappadef}).  See Proposition~\ref{P:hyperkappa} for more details.  Somewhat surprisingly, as the left-hand side involves only~$\theta$, the expression on the right is independent of $t$, and this leads to the following result on certain hyperderivative power sums.

\begin{theorem:hypersum}
Let $i \geqslant 1$, and let $1 \leqslant s \leqslant q-1$.  Then for any $\mu_1, \dots, \mu_s \geqslant 0$ and any $0 \leqslant j_1, \dots, j_s \leqslant i$,
\begin{align*}
  \sum_{a \in A_{i+}} &\frac{\pd_{\theta}^{j_1}(a)^{q^{\mu_1}} \cdots \pd_{\theta}^{j_s}(a)^{q^{\mu_s}}}{a} \\
  & \hspace*{30pt} = \frac{1}{L_i} \prod_{r=1}^s (-1)^{i-j_r} e_{i,i-j_r} \bigl( \theta - \theta^{q^{\mu_r}}, \theta^q - \theta^{q^{\mu_r}}, \dots, \theta^{q^{i-1}} - \theta^{q^{\mu_r}} \bigr).
\end{align*}
\end{theorem:hypersum}

Here $L_i =(\theta - \theta^q)(\theta-\theta^{q^2}) \cdots (\theta-\theta^{q^i}) \in A$, and $e_{ij}$ denotes the elementary symmetric polynomial in $i$ variables of degree $j$.

As pointed out by the referee, one can apply the techniques of Pellarin and Perkins~\cite{PellarinPerkins20} to obtain formulas of the type in Theorem~\ref{T:hypersum}, without the restriction on~$s$.  We explore the implications of their results to log-algebraic identities in \S\ref{S:Anderson}.

To demonstrate the connections between hyperderivative power sums and Anderson's log-algebraicity results, we recall that the Carlitz module $C$ is the $A$-module structure placed on any $A$-algebra $R$ by setting
\[
  C_{\theta}(x) = \theta x+ x^q, \quad x \in R.
\]
As such for any $a \in A$, the Carlitz operation of $a$ on $R$ is given by a polynomial,
\[
  C_a(x) = \sum_{k=0}^{\deg a} \brac{a}{k} x^{q^k} \in A[x],
\]
where $\brac{a}{0} = a$ and $\brac{a}{\deg a} \in \FF_q$ is the leading coefficient of $a$.  Carlitz~\cite{Carlitz35} showed that
\begin{equation} \label{E:Carlitzbracintro}
  \brac{a}{k} = \sum_{j=0}^k \frac{a^{q^j}}{D_j L_{k-j}^{q^j}},
\end{equation}
where $L_{k}$ is defined in the previous paragraph, and $D_j = (\theta^{q^j}-\theta)(\theta^{q^j}-\theta^q) \cdots (\theta^{q^{j}} - \theta^{q^{j-1}})$.  However, by investigating the interplay of the definition of $\brac{a}{k}$ and hyperderivatives, we prove a new formula for $\brac{a}{k}$ in Proposition~\ref{P:newbrac},
\begin{equation} \label{E:newbracintro}
  \brac{a}{k} = \sum_{j=k}^{\deg a} \pd_{\theta}^j(a) \cdot h_{k,j-k} ( \theta^q-\theta, \theta^{q^2} -\theta, \dots,
  \theta^{q^k}-\theta),
\end{equation}
where $h_{kj}$ represents the complete homogeneous symmetric polynomial in $k$ variables of degree~$j$ (see \S\ref{S:Prelim} for details).

In~\cite{And96}, Anderson proved the following power series identity that he termed a log-algebraicity result.  For new variables $x$ and $z$ and for $\beta(x) \in A[x]$, we set
\begin{equation}
  \cP(\beta,z) \assign \exp_C \left( \sum_{a \in A_+} \frac{\beta(C_a(x))}{a} z^{q^{\deg a}} \right) \in \power{K[x]}{z},
\end{equation}
where $\exp_C(z) = \sum_{i \geqslant 0} z^{q^i}/D_i$ is the Carlitz exponential (see~\S\ref{S:Prelim}).

\begin{theorem:Anderson}[{Anderson~\cite[Thm.~3, Prop.~8]{And96}}]
The power series $\cP(\beta,z) \in \power{K[x]}{z}$ is in fact a polynomial.  Moreover,
\[
\cP(\beta,z) \in A[x,z].
\]
\end{theorem:Anderson}

Anderson then used these identities to show that special values at $s=1$ of Goss $L$-functions $L(\chi,s)$ for Dirichlet characters could be expressed in a direct way as $\oK$-linear combinations of Carlitz logarithms of values of special polynomials,
\[
  P_m(x,z) \assign \cP(x^m,z), \quad m \geqslant 0,
\]
at Carlitz torsion points.  In~\cite[\S 8.10]{Thakur}, Thakur discovered exact identities for Anderson's special polynomials, using Carlitz's formula~\eqref{E:Carlitzbracintro} and formulas for $S_i(k)$ (see Theorem~\ref{T:CarlitzLee}).  His result can be restated in terms of symmetric polynomials as follows.

\begin{theorem:Thakur}[{Thakur~\cite[\S 8.10]{Thakur}}] \
\begin{enumerate}
\item[(a)] Let $m = q^{\mu}$ for $\mu \geqslant 0$.  Then
\[
  P_{q^\mu}(x,z) = \sum_{d=0}^{\mu} (-1)^{\mu-d} \cdot C_{e_{\mu,\mu-d}(\theta, \dots, \theta^{q^{\mu-1}})} \bigl(
  C_{\theta^d}(x)\cdot z \bigr).
\]
\item[(b)] Suppose $m$ is of the form $m = q^{\mu_1} + \cdots + q^{\mu_s}$ for $\mu_j \geqslant 0$ and $1 \leqslant s \leqslant q-1$.  Then
\[
  P_m(x,z) = \sum_{d_1=0}^{\mu_1} \cdots \sum_{d_s=0}^{\mu_s} (-1)^{\sum \mu_j - \sum d_j} \cdot
  C_{\prod_{r=1}^s e_{\mu_r,\mu_r-d_r} (\theta, \dots, \theta^{q^{\mu_r-1}})}
  \Biggl( \Biggl( \prod_{r=1}^s C_{\theta^{d_r}}(x) \Biggr)\cdot z \Biggr).
\]
\end{enumerate}
\end{theorem:Thakur}

In \S\ref{S:Thakur} of the present paper we use~\eqref{E:newbracintro} on relating $\brac{a}{k}$ to hyperderivatives and Theorem~\ref{T:hypersum} on hyperderivative power sums to devise a new proof of Thakur's theorem.  In addition to these ingredients the proof relies heavily on several identities for symmetric polynomials.

The outline of the paper is as follows.  After laying out preliminaries on the Carlitz module, hyperderivatives, and symmetric polynomials in~\S\ref{S:Prelim}, we use these objects to construct formulas for Carlitz multiplication coefficients $\brac{a}{k}$ in~\S\ref{S:Brac}.  In \S\ref{S:Vandermonde} we investigate the connections between hyperderivatives and Vandermonde matrices as well as recall connections with a theorem of Voloch~\cite{Voloch98}.  In \S\ref{S:Sums} we apply the previous techniques to prove formulas for hyperderivative power sums, and then we bring all of these results together in \S\ref{T:Thakur} to give a new proof of Thakur's theorem.  Finally, in \S\ref{S:Anderson} we investigate formulas of Pellarin and Perkins and provide a new proof of Anderson's theorem.

\begin{acknowledgments}
It is with great pleasure that I dedicate this paper to my friend and mentor David Goss.  After discussing with him some years ago part of the material of what would eventually be this paper, David wrote a blog post~\cite{Goss13blog} that outlined the aspects he found most intriguing and included further insights in~\cite{Goss17}.  I would like to thank him for his immense contributions to function field arithmetic and for the interest and enthusiasm he took to my work throughout my career.

I would like to thank the referee, who made several invaluable suggestions that improved the scope of this paper.  In particular, the referee pointed out the connections between Proposition~\ref{P:newbrac} and previous work of Jeong~\cite{Jeong00}, and the referee apprised us of formulas of Pellarin and Perkins~\cite{PellarinPerkins20} that would facilitate using the techniques of this paper to obtain a new proof of Theorem~\ref{T:Anderson}.  I also thank O.~Gezmi\c{s} for helping compare the contents of~\cite{DemeslayPhD}, \cite{PellarinPerkins20}.
\end{acknowledgments}

\section{Preliminaries} \label{S:Prelim}

For $q$ a fixed power of a prime $p$, let $A = \FF_q[\theta]$ be a polynomial ring in one variable over the finite field with $q$ elements, and let $K = \FF_q(\theta)$ be its fraction field.  We take $K_{\infty} = \laurent{\FF_q}{1/\theta}$ for the completion of $K$ at its infinite place, and we take $\KK$ for the completion of an algebraic closure of $K_{\infty}$.  We denote the set of monic elements of $A$ by $A_+$, and for each $i \geqslant 0$, we set
\begin{equation}
  A_{i+} \assign \{ a \in A_+ \mid \deg a = i \}.
\end{equation}
For $i \geqslant 0$, we define the polynomials $[i] = \theta^{q^i} - \theta \in A$, we set $L_0=D_0=1$, and we set
\begin{equation}
D_i \assign [i] [i-1]^{q} \cdots [1]^{q^{i-1}}, \quad L_i \assign (-1)^i [i][i-1] \cdots [1], \quad i \geqslant 1.
\end{equation}

Letting $\tau : \KK \to \KK$ be the $q$-th power Frobenius endomorphism, the \emph{Carlitz module} is the Drinfeld module $C : A \to A[\tau]$ defined by
\[
  C_{\theta} = \theta + \tau.
\]
The ring $A[\tau]$ is the ring of twisted polynomials in $\tau$ with coefficients in $A$.  For $a \in A$ and $k \in \ZZ_+$, we define $\brac{a}{k} \in A$ by
\begin{equation} \label{E:bracdef}
  C_a = \sum_{k=0}^{\deg a} \brac{a}{k} \tau^k.
\end{equation}
As usual $C$ defines an $A$-module structure on any $A$-algebra by way of the commutative polynomials for $a \in A$,
\[
  C_a(x) = \sum_{k=0}^{\deg a} \brac{a}{k} x^{q^k} \in A[x].
\]
The Carlitz exponential and logarithm are defined by the infinite series,
\[
  \exp_C(z) = \sum_{i \geqslant 0} \frac{z^{q^i}}{D_i}, \quad
  \log_C(z) = \sum_{i \geqslant 0} \frac{z^{q^i}}{L_i}.
\]
They are mutual inverses of each other and for each $a \in A$, we have $\exp_C(az) = C_a(\exp_C(z))$.  For more information about the Carlitz module, and Drinfeld modules in general, the reader is directed to~\cite[Ch.~3--4]{Goss}, \cite[Ch.~2--3]{Thakur}.

For a field $F$ and a variable $\theta$ transcendental over $F$, the \emph{hyperdifferential operators with respect to $\theta$}, $\pd_\theta^{j}: F[\theta] \to F[\theta]$, $j \geqslant 0$, are defined $F$-linearly by setting $\pd_{\theta}^{j}(\theta^n) = \binom{n}{j} \theta^{n-j}$.  We note that $\binom{n}{j} = 0$ when $n < j$, and so these maps are well-defined.  Hyperderivatives then extend uniquely to operators $\pd_\theta^j : F(\theta)_v^{\sep} \to F(\theta)_v^{\sep}$ on the separable closure of the completion of $F(\theta)$ at a place $v$ (see~\cite[\S 4]{Conrad00}, \cite[\S 2]{Jeong11}).  Hyperderivatives satisfy several kinds of differentiation rules, such as the product rule,
\begin{equation}
  \pd_{\theta}^j (fg) = \sum_{k=0}^j \pd_{\theta}^k(f) \pd_{\theta}^{j-k}(f), \quad f, g \in F(\theta)_v^{\sep},
\end{equation}
and the composition rule,
\begin{equation}
  \pd_{\theta}^j(\pd_{\theta}^k(f)) = \pd_{\theta}^k (\pd_\theta^j(f)) = \binom{j+k}{j} \pd_{\theta}^{j+k}(f), \quad
  f \in F(\theta)_v^{\sep}.
\end{equation}
For various versions of the product rule, quotient rule, power rule, and chain rule, the reader is directed to~\cite[\S 2]{Jeong11}, \cite[\S 2.3]{PLogAlg}.

For a sequence of independent variables $\theta_1, \dots, \theta_m$, we can define compatible partial hyperderivatives,
\[
  \pd_{\theta_i}^j : F(\theta_1, \dots, \theta_m) \to F(\theta_1, \dots, \theta_m),
\]
in the natural way with the property that for $i \neq i'$ we have $\pd_{\theta_{i}}^{j} \circ \pd_{\theta_{i'}}^{j'} = \pd_{\theta_{i'}}^{j'} \circ \pd_{\theta_i}^j$ (see \cite[Ch.~2]{Okugawa}).  Mostly we will focus on the case of two variables, say $\theta$ and $t$, and for functions $f \in F(\theta,t)$, we say that $f$ is regular at $t=\theta$ if $f|_{t=\theta} \assign f(\theta,\theta)$ is well-defined in $F(\theta)$.  For $f$ regular at $t=\theta$, it follows from the quotient rule that $\pd_{t}^j(f)$ is also regular at $t=\theta$ for $j \geqslant 0$.  The following standard Taylor series lemma will be used throughout.

\begin{lemma} \label{L:Taylor}
For a field $F$, let $f \in F(\theta,t)$ be regular at $t=\theta$.  Then as an element of $\power{F(\theta)}{t-\theta}$,
\[
  f(\theta,t) = \sum_{j=0}^{\infty} \pd_t^j(f)\big|_{t=\theta} \cdot (t-\theta)^j.
\]
\end{lemma}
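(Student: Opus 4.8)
The plan is to prove the Taylor series lemma by first reducing to the case where $f$ is a polynomial in $t$, and then verifying the formula directly on monomials. Since $f \in F(\theta,t)$ is regular at $t=\theta$, we may write $f = g/h$ with $g, h \in F[\theta,t]$ and $h(\theta,\theta) \neq 0$ in $F(\theta)$. Working in the completion $\power{F(\theta)}{t-\theta}$, the denominator $h$ is a unit: writing $h$ as a polynomial in $(t-\theta)$ with coefficients in $F(\theta)$, its constant term $h(\theta,\theta)$ is nonzero, so $h$ is invertible in the power series ring. Hence $f = g \cdot h^{-1}$ makes sense as an element of $\power{F(\theta)}{t-\theta}$, and it suffices to show that the coefficient of $(t-\theta)^j$ in this expansion equals $\pd_t^j(f)\big|_{t=\theta}$ for every $j \geqslant 0$.

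Next I would establish the formula for $f \in F[\theta, t]$, indeed by $F[\theta]$-linearity it is enough to treat $f = t^n$. On one hand, expanding $t^n = ((t-\theta) + \theta)^n = \sum_{j=0}^n \binom{n}{j}\theta^{n-j}(t-\theta)^j$ by the binomial theorem gives the left-hand side coefficients. On the other hand, $\pd_t^j(t^n) = \binom{n}{j} t^{n-j}$ by the definition of the hyperderivative, so $\pd_t^j(t^n)\big|_{t=\theta} = \binom{n}{j}\theta^{n-j}$; the two agree. This settles the polynomial case.

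To pass from polynomials to the general regular $f = g/h$, I would use the quotient rule for hyperderivatives (cited from \cite[\S 2]{Jeong11}, \cite[\S 2.3]{PLogAlg}) together with the observation already noted in the text that $\pd_t^j(f)$ is again regular at $t=\theta$ when $f$ is. Concretely, define $T(f) := \sum_{j \geqslant 0} \bigl(\pd_t^j(f)\big|_{t=\theta}\bigr)(t-\theta)^j \in \power{F(\theta)}{t-\theta}$; the composition rule $\pd_t^k \pd_t^j = \binom{j+k}{j}\pd_t^{j+k}$ shows that $T$ is compatible with hyperderivation, i.e. the coefficients of $T(f)$ are themselves given by iterated hyperderivatives, and the product rule shows $T$ is a ring homomorphism $F(\theta)[t]_{(t-\theta\text{-regular})} \to \power{F(\theta)}{t-\theta}$ fixing $F(\theta)$ and sending $t \mapsto t$. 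Two such continuous $F(\theta)$-algebra homomorphisms that agree on $F[\theta,t]$ and respect the passage to inverses must agree on all regular $f$; since the literal inclusion $f \mapsto f$ (viewing $f$ as a power series) is another such homomorphism agreeing with $T$ on polynomials, we get $T(f) = f$ in $\power{F(\theta)}{t-\theta}$, which is the claim.

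The main obstacle is the bookkeeping in that last step: one must be careful that $T$ is well-defined (independent of the representation $f = g/h$), genuinely multiplicative on the subring of elements regular at $t=\theta$, and that "agreeing on polynomials plus commuting with inverses" really does force agreement everywhere. All of this is routine given the product and quotient rules for hyperderivatives, and amounts to the standard fact that hyperderivatives compute Taylor coefficients; there is no serious difficulty, only care with the formal setup.
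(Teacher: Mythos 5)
The paper states this lemma without proof, treating it as a standard Taylor-series fact, so there is no argument of record to compare against; your proposal supplies a correct and essentially complete proof. The two pillars are sound: the monomial computation $t^n = \sum_j \binom{n}{j}\theta^{n-j}(t-\theta)^j$ matched against $\pd_t^j(t^n)\big|_{t=\theta} = \binom{n}{j}\theta^{n-j}$ settles the polynomial case, and the product rule $\pd_t^j(fg) = \sum_{k=0}^{j}\pd_t^k(f)\,\pd_t^{j-k}(g)$ makes $T(f) := \sum_{j\geqslant 0}\bigl(\pd_t^j(f)\big|_{t=\theta}\bigr)(t-\theta)^j$ a ring homomorphism on the local ring of elements of $F(\theta)(t)$ regular at $t=\theta$, which must then agree with the tautological inclusion into $\power{F(\theta)}{t-\theta}$ because it does so on $F(\theta)[t]$ and sends admissible denominators to units (the universal property of localization). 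Two simplifications are worth making. First, the detour through the composition rule is unnecessary: multiplicativity of $T$ is the only structural input, and your final step is cleanest written as $T(g) = T(fh) = T(f)\,T(h)$ with $T(g)=g$ and $T(h)=h$ a unit of the power series ring, whence $T(f)=g/h=f$. Second, the worry about $T$ being well defined independently of the representation $f=g/h$ is moot, since $T$ is defined intrinsically on $f$ through $\pd_t^j(f)$, whose regularity at $t=\theta$ the paper has already observed follows from the quotient rule. With those trims the argument is exactly the standard proof one would expect the author to have in mind.
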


We further recall definitions of symmetric polynomials.  For independent variables $t$, $x_1, x_2, \ldots$ over $\ZZ$, the \emph{elementary symmetric polynomials} $e_{ij} \in \ZZ[x_1, \dots, x_i]$ are defined by
\begin{equation} \label{E:eijdef}
\sum_{j=0}^{i} e_{ij}(x_1, \dots, x_i) t^j = (1 + x_1 t)(1 + x_2 t) \cdots (1 + x_i t),
\end{equation}
and the \emph{complete homogeneous symmetric polynomials} $h_{ij} \in \ZZ[x_1, \dots, x_i]$ are defined by
\begin{equation} \label{E:hijdef}
\sum_{j=0}^{\infty} h_{ij}(x_1, \dots, x_i)t^j = \frac{1}{(1 - x_1 t)(1 - x_2 t) \cdots (1 - x_i t)}.
\end{equation}
It is readily apparent that $e_{i0}=h_{i0} = 1$ and $e_{i1}=h_{i1} = x_1 + \dots + x_i$ for all $i \geqslant 0$.  The polynomial $h_{ij}$ consists of the sum of all monomials in $x_1, \dots, x_i$ of total degree $j$, so for example,
\[
  h_{22} = x_1^2 + x_1 x_2 + x_2^2, \quad h_{23} = x_1^3 + x_1^2 x_2 + x_1 x_2^2 + x_2^3.
\]
By convention we extend $e_{ij}$ and $h_{ij}$ to all $j \in \ZZ$ by setting
\begin{gather*}
  \textup{($j < 0$ or $j > i$)} \quad \Rightarrow \quad e_{ij} = 0, \\
  j < 0 \quad \Rightarrow \quad h_{ij} = 0.
\end{gather*}
In this notation $e_{ij}$ (resp.~$h_{ij}$) represents the elementary symmetric polynomial (resp.~complete homogenous symmetric polynomial) of degree $j$ in $i$ variables.  For more detailed information on symmetric polynomials, see~\cite[Ch.~7]{Stanley}.

The polynomials $e_{ij}$ and $h_{ij}$ satisfy several standard recurrence relations through relations on their generating functions.  The first is the pair of recursions,
\begin{align} \label{E:eijrec1}
  e_{ij}(x_1, \dots, x_i) &= e_{i-1,j}(x_1, \dots, \widehat{x_{\ell}}, \dots, x_i) + x_{\ell} e_{i-1,j-1}
    (x_1, \dots, \widehat{x_{\ell}}, \dots, x_i) \\
  \label{E:hijrec1}
  h_{ij}(x_1, \dots, x_i) &= h_{i-1,j}(x_1, \dots, \widehat{x_{\ell}}, \dots, x_i) + x_\ell h_{i,j-1}
    (x_1, \dots, x_i),
\end{align}
where `$\widehat{x_{\ell}}$' indicates that the variable $x_{\ell}$ is omitted.  These imply for $i \geqslant 1$ and $j \geqslant 0$,
\begin{align} \label{E:eijspec0}
  e_{ij}(x_1, \dots, x_i)|_{x_{\ell}=0} = e_{i-1,j}(x_1, \dots, \widehat{x_{\ell}}, \dots, x_{i}), \\
  \label{E:hijspec0}
  h_{ij}(x_1, \dots, x_i)|_{x_{\ell}=0} = h_{i-1,j}(x_1, \dots, \widehat{x_{\ell}}, \dots, x_{i}).
\end{align}
A second type of recursive relation holds for $e_{ij}$ and $h_{ij}$ in tandem.  See also \cite[Pf.~of Thm.~3.2]{OrucAkmaz04} and~\cite[\S 7.6]{Stanley}.

\begin{proposition} \label{P:symmrec2}
For fixed $i \geqslant 1$ and $1 \leqslant k \leqslant i$,
\[
  \sum_{j=k}^i (-1)^{i-j} e_{i-1,i-j}(x_1, \dots, x_{i-1}) \cdot h_{k,j-k}(x_1, \dots, x_k) =
  \begin{cases}
  1 & \textup{if $k=i$,} \\
  0 & \textup{if $k < i$.}
  \end{cases}
\]
\end{proposition}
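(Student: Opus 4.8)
The plan is to prove the identity by recognizing the left-hand side as the coefficient of a single power of $t$ in a product of the two generating functions \eqref{E:eijdef} and \eqref{E:hijdef}, after which the statement falls out of an elementary cancellation.

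First I would reindex the sum by setting $m = i-j$, so that $j$ running from $k$ to $i$ corresponds to $m$ running from $0$ to $i-k$, and $j-k = (i-k)-m$. The left-hand side becomes
\[
  \sum_{m=0}^{i-k} (-1)^m e_{i-1,m}(x_1,\dots,x_{i-1})\, h_{k,(i-k)-m}(x_1,\dots,x_k),
\]
which, using the convention that $e_{i-1,m}=0$ for $m>i-1$ and $h_{k,n}=0$ for $n<0$, is exactly the coefficient of $t^{\,i-k}$ in the product
\[
  \Biggl( \sum_{m\geqslant 0} e_{i-1,m}(x_1,\dots,x_{i-1})\,(-t)^m \Biggr)
  \Biggl( \sum_{n\geqslant 0} h_{k,n}(x_1,\dots,x_k)\, t^n \Biggr).
\]
By \eqref{E:eijdef} with $t$ replaced by $-t$, the first factor equals $\prod_{\ell=1}^{i-1}(1 - x_\ell t)$, and by \eqref{E:hijdef} the second factor equals $\prod_{\ell=1}^{k} (1 - x_\ell t)^{-1}$.

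Next I would multiply these together. Since $k \leqslant i$, the first $k$ factors $\prod_{\ell=1}^{k}(1-x_\ell t)$ in the numerator cancel the denominator, leaving
\[
  \prod_{\ell=1}^{i-1}(1 - x_\ell t) \cdot \prod_{\ell=1}^{k}\frac{1}{1 - x_\ell t}
  = \begin{cases} \dfrac{1}{1 - x_i t} & \text{if } k=i, \\[2mm] \displaystyle\prod_{\ell=k+1}^{i-1}(1 - x_\ell t) & \text{if } k<i. \end{cases}
\]
If $k=i$, the coefficient of $t^{\,i-k}=t^0$ in $1/(1-x_i t)$ is $1$, giving the first case. If $k<i$, the product $\prod_{\ell=k+1}^{i-1}(1-x_\ell t)$ is a polynomial in $t$ of degree $i-1-k$, strictly less than $i-k$, so the coefficient of $t^{\,i-k}$ is $0$, giving the second case.

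I do not expect a serious obstacle here: the only points requiring care are the bookkeeping of the reindexing $m=i-j$, the sign bookkeeping from the substitution $t\mapsto -t$ in \eqref{E:eijdef}, and the observation that the surviving polynomial has degree one short of the coefficient being extracted when $k<i$. If one prefers to avoid generating functions, the same identity can be established by induction on $i$ using the recursions \eqref{E:eijrec1} and \eqref{E:hijrec1}, but the generating-function argument is cleaner and I would present that one.
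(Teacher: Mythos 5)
Your proof is correct and is essentially the same argument as the paper's: both multiply the generating functions \eqref{E:eijdef} (with $t \mapsto -t$) and \eqref{E:hijdef}, cancel the common factors $\prod_{\ell=1}^{k}(1-x_\ell t)$, and observe that the surviving polynomial has degree $i-k-1 < i-k$ when $k<i$, so the coefficient of $t^{i-k}$ vanishes. The only cosmetic difference is that you also read off the $k=i$ case from the generating function (coefficient of $t^0$ in $1/(1-x_it)$), whereas the paper checks that case directly from the definitions.
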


\begin{proof}
One easily checks the case $k=i$ from the definitions of $e_{ij}$ and $h_{ij}$.  Thus we can now assume that $k \leqslant i-1$.  From~\eqref{E:eijdef} and~\eqref{E:hijdef}, we find
\[
  \sum_{j=0}^{i-1} \sum_{\ell = 0}^{\infty} (-1)^j e_{i-1,j}(x_1, \dots, x_{i-1}) \cdot h_{k \ell}(x_1, \dots, x_k) t^{j+\ell} = (1 - x_{k+1} t) \cdots (1 - x_{i-1} t),
\]
and by reversing the order of the outer sum we have
\[
 \sum_{j=1}^{i} \sum_{\ell = 0}^{\infty} (-1)^{i-j} e_{i-1,i-j}(x_1, \dots, x_{i-1}) \cdot h_{k \ell}(x_1, \dots, x_k) t^{i-j+\ell} = (1 - x_{k+1} t) \cdots (1 - x_{i-1} t).
\]
The right-hand side has degree $i-k-1$ in $t$, and so the coefficient of $t^{i-k}$ on the right is~$0$.  On the other hand, $t^{i-k}$ appears in the left-hand side precisely when $\ell = j-k$, which implies
\[
  \sum_{j=1}^i (-1)^{i-j} e_{i-1,i-j}(x_1, \dots, x_{i-1}) \cdot h_{k,j-k}(x_1, \dots, x_k) = 0.
\]
Since again $h_{k,j-k} = 0$ when $j < k$, the result follows.
\end{proof}

\begin{remark} \label{R:EdHd}
One useful way of expressing Proposition~\ref{P:symmrec2} is the following.  For $d \geqslant 1$, define lower triangular $d \times d$ matrices with entries in $\ZZ[x_1, \dots, x_{d-1}]$,
\begin{align*}
\cE_{d} &= \begin{pmatrix}
1 & 0 & 0 & \cdots & 0\\
-e_{11} & 1 & 0 & \cdots & 0 \\
e_{22} & -e_{21} & 1 & \cdots & 0\\
\vdots & \vdots & \ddots & \ddots & \vdots  \\
(-1)^{d-1} e_{d-1,d-1} & (-1)^{d-2} e_{d-1,d-2} & \cdots & -e_{d-1,1} & 1
\end{pmatrix}, \\[10pt]
\cH_d &= \begin{pmatrix}
1 & 0 & 0 & \cdots & 0 \\
h_{11} & 1 & 0 & \cdots & 0 \\
h_{12} & h_{21} & 1 & \cdots & 0 \\
\vdots & \vdots & \ddots & \ddots & \vdots  \\
h_{1,d-1} & h_{2,d-2} & \cdots & h_{d-1,1} & 1
\end{pmatrix}.
\end{align*}
Then Proposition~\ref{P:symmrec2} is equivalent to the identity, for $i \leqslant d$,
\[
  \cE_d \cdot \cH_d = I_d,
\]
where $I_d$ is the $d\times d$ identity matrix.  The product $\cH_d \cdot \cE_d = I_d$ produces a companion formula for Proposition~\ref{P:symmrec2}, which we do not state here but arises in the proof of Proposition~\ref{P:bracthetam}.

These matrices also appear in calculations of the $LDU$ decomposition of the Vandermonde matrix in $x_1, \dots, x_i$ (see~\cite{OrucAkmaz04}, \cite{OrucPhillips00}).  In the next section we will use them to derive new formulas for the coefficients $\brac{a}{k}$ of the Carlitz multiplication polynomials~$C_a$ from~\eqref{E:bracdef}.
\end{remark}

\section{Carlitz multiplication coefficients} \label{S:Brac}
Let $M = \KK[t]$, where $t$ is a variable independent from $\theta \in \KK$.  Then we can define a left $\KK[\tau]$-module structure on $M$ by setting $\tau m \assign (t-\theta) m^{(1)}$ for $m \in M$.  As a left $\KK[t,\tau]$-module, $M$ then has the structure of an Anderson $t$-motive, in the sense of~\cite{And86}, which is isomorphic to the $t$-motive of the Carlitz module (see~\cite[\S 4.3]{BPrapid}, \cite[\S 5.8]{Goss}).  Now define polynomials $\mu_{k}(t) \in A[t]$ for $k \geqslant 0$ by setting $\mu_0=1$, and
\begin{equation} \label{E:mudef}
  \mu_k(t) \assign (t-\theta)(t-\theta^q) \cdots (t-\theta^{q^{k-1}}), \quad k \geqslant 1.
\end{equation}
As $\deg_t \mu_k = k$, we see that $\{ \mu_k \}$ forms a $\KK$-basis for $\KK[t]$.
Then the following proposition is due to Thakur, based on previous work of Drinfeld and Mumford~\cite{Mumford78}.

\begin{proposition}[{Thakur~\cite[\S 0.3.5]{Thakur93}}] \label{P:bracThakur}
For $a = a(\theta) \in A$, the expansion of $a(t) \in \KK[t]$ in terms of the basis $\{ \mu_k\}$ is given by
\[
  a(t) = \sum_{k=0}^{\deg a} \brac{a}{k} \mu_k(t).
\]
\end{proposition}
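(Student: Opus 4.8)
The plan is to exploit the $\KK[\tau]$-module structure on $M = \KK[t]$ directly. The key observation is that the map $a \mapsto a(t)$, sending $a(\theta) \in A$ to the polynomial obtained by substituting $t$ for $\theta$, is intertwined with the $\KK[\tau]$-action: applying $\tau$ to the element $1 \in M$ and iterating, one sees that $\tau^k \cdot 1 = (t-\theta)(t-\theta^q)\cdots(t-\theta^{q^{k-1}}) = \mu_k(t)$, since $\tau m = (t-\theta)m^{(1)}$ and the $q$-th power Frobenius $m \mapsto m^{(1)}$ raises the $\theta$'s appearing in the coefficients to the $q$-th power while fixing $t$. More precisely, I would first check by induction on $k$ that $\tau^k \cdot 1 = \mu_k(t)$ in $M$: the base case $k=0$ is the normalization $\mu_0 = 1$, and for the inductive step, $\tau^{k+1} \cdot 1 = \tau(\mu_k(t)) = (t - \theta)\mu_k(t)^{(1)} = (t-\theta)(t - \theta^q)\cdots(t - \theta^{q^k}) = \mu_{k+1}(t)$, where I use that $\mu_k(t)^{(1)}$ replaces each $\theta^{q^j}$ by $\theta^{q^{j+1}}$ and leaves $t$ alone.

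With that in hand, the proposition follows by applying $C_a$ to the element $1 \in M$. On one side, since $M$ is (isomorphic to) the $t$-motive of the Carlitz module, the $A$-action via $C$ is the restriction of the $\KK[\tau]$-action, so $C_a \cdot 1 = \left(\sum_{k=0}^{\deg a} \brac{a}{k}\tau^k\right)\cdot 1 = \sum_{k=0}^{\deg a}\brac{a}{k}\,\mu_k(t)$ by the formula just established and the fact that the scalars $\brac{a}{k} \in A$ act by multiplication. On the other side, I need that $C_a \cdot 1 = a(t)$; this is the heart of the matter and amounts to the statement that multiplication by $\theta \in A$ on the motive $M$, restricted to the submodule $\KK[t]$ (all of $M$), acts as multiplication by $t$. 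Indeed $C_\theta = \theta + \tau$ acts on $m \in M$ by $m \mapsto \theta m + (t-\theta)m^{(1)}$, and on $m = 1$ this gives $\theta + (t - \theta) = t$; more generally one checks $C_\theta$ acts on $\KK[t] \subseteq M$ as multiplication by $t$ (this is precisely the defining property of the Carlitz $t$-motive, recorded in the cited references \cite{BPrapid, Goss}), whence $C_{a(\theta)} \cdot 1 = a(t) \cdot 1 = a(t)$ for every $a \in A$.

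Equating the two expressions for $C_a \cdot 1$ gives $a(t) = \sum_{k=0}^{\deg a}\brac{a}{k}\mu_k(t)$, which is the claim; the uniqueness of the expansion (needed only if one wants to characterize the $\brac{a}{k}$ rather than merely exhibit the identity) is immediate since $\{\mu_k\}$ is a $\KK$-basis of $\KK[t]$ as noted just before the statement. The main obstacle is purely expository rather than mathematical: one must be careful about the direction of the twisting (whether $\tau$ acts by $(t-\theta)m^{(1)}$ on the left or involves $\theta^q$ versus $\theta$ in the first factor) so that the telescoping produces $\mu_k(t) = (t-\theta)(t-\theta^q)\cdots(t-\theta^{q^{k-1}})$ with exactly the exponents in the definition, and one must invoke the identification of $M$ with the Carlitz $t$-motive to know that $C_\theta$ restricts to multiplication by $t$. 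Once the conventions are pinned down, the argument is a one-line computation; alternatively, if one prefers to avoid motive language entirely, the identity $a(t) = \sum \brac{a}{k}\mu_k(t)$ can be proved by induction on $\deg a$ using the recursion $C_{\theta a} = C_\theta C_a = (\theta + \tau)C_a$ together with $\tau \mu_k(t) = \mu_{k+1}(t)$ in the module, but I expect the motive-theoretic phrasing above to be the cleanest.
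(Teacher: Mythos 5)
The paper does not actually prove this proposition --- it is imported verbatim from Thakur \cite[\S 0.3.5]{Thakur93} --- so your proposal supplies an argument where the paper gives only a citation. The core of your argument is correct and is essentially the standard one: the map $P(\tau) \mapsto P(\tau)\cdot 1$ identifies $\KK[\tau]$ with $M=\KK[t]$ sending $\tau^k \mapsto \mu_k(t)$ (your induction $\tau^{k+1}\cdot 1 = (t-\theta)\mu_k(t)^{(1)} = \mu_{k+1}(t)$ is right), and then $C_a\cdot 1 = \sum_k \brac{a}{k}\mu_k(t)$ on one side and $a(t)$ on the other.

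The one statement you should not let stand is that ``$C_\theta$ acts on $\KK[t]\subseteq M$ as multiplication by $t$.'' That is false on all of $\KK[t]$: for a scalar $c\in\KK\setminus\FF_q$ one gets $C_\theta\cdot c = \theta c + (t-\theta)c^q \neq tc$. The action $m\mapsto \theta m + (t-\theta)m^{(1)}$ agrees with multiplication by $t$ exactly on the Frobenius-fixed subring $\FF_q[t]$, where $m^{(1)}=m$; equivalently, left multiplication by $C_\theta$ in $\KK[\tau]$ agrees with right multiplication by $C_\theta$ (which is what corresponds to multiplication by $t$ under the motive identification) only on the commutative image of $A$. Fortunately your induction only ever applies $C_\theta$ to elements $a(t)\in\FF_q[t]$ (starting from $1$, then $t^n$, etc.), so the computation $C_\theta\cdot a(t) = \theta a(t) + (t-\theta)a(t) = t\,a(t)$ goes through, and your closing alternative --- induction on $\deg a$ via $C_{\theta a}=C_\theta C_a$ together with $\tau\cdot\mu_k = \mu_{k+1}$ --- is precisely the correct repair. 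With that qualification inserted, the proof is complete.
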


That is, the Carlitz multiplication coefficients of $C_a$ from~\eqref{E:bracdef} are also the coefficients of $a(t)$ in terms of the our $\KK$-basis on the $t$-motive $M$.  See \cite[\S 7.11]{Goss}, \cite[\S 3]{GreenP18}, \cite[\S 0.3]{Thakur93}, for additional discussion and details.

In this section we will use Proposition~\ref{P:bracThakur} to derive a new formula for $\brac{a}{k}$ in terms of hyperderivatives and symmetric polynomials.  We first observe from~\eqref{E:eijdef} that for $k \geqslant 1$,
\begin{align} \label{E:mukdec1}
  \mu_k(t) &= (t - \theta)(t - \theta - [1]) \cdots (t-\theta-[k-1]) \\
  &= \sum_{j=1}^k (-1)^{k-j} e_{k-1,k-j}([1], \dots, [k-1]) (t-\theta)^j. \notag
\end{align}
For $d \geqslant 0$, we take $\cN_d \in \Mat_{d+1} ( \ZZ[x_1, \dots, x_{d-1}])$ to be
\[
  \cN_d = \cN_d(x_1, \dots, x_{d-1}) = \begin{pmatrix} 1 & 0 \\ 0 & \cE_d \end{pmatrix},
\]
where $\cE_d$ is defined in Remark~\ref{R:EdHd}, and then~\eqref{E:mukdec1} implies
\begin{equation} \label{E:mukdec2}
\begin{pmatrix} \mu_0(t) \\ \mu_1(t) \\ \mu_2(t) \\ \vdots \\ \mu_d(t) \end{pmatrix}
= \cN_d([1], \dots, [d-1])
\begin{pmatrix} 1 \\ t-\theta \\ (t-\theta)^2 \\ \vdots \\ (t-\theta)^d \end{pmatrix}.
\end{equation}
Now let $a \in A$ have degree $d$ in $\theta$.  From Lemma~\ref{L:Taylor}, it follows that
\[
  a(t) = \sum_{j=0}^d \pd_{\theta}^j(a) (t-\theta)^j.
\]
Therefore, \eqref{E:mukdec2} implies that
\begin{align} \label{E:aNddec}
a(t) &= \bigl( \pd_{\theta}^0(a), \dots, \pd_{\theta}^d(a) \bigr)
  \begin{pmatrix} 1 \\ t-\theta \\ \vdots \\ (t-\theta)^d \end{pmatrix} \\
  &= \bigl( \pd_{\theta}^0(a), \dots, \pd_{\theta}^d(a) \bigr) \cN_d([1], \dots, [d-1])^{-1}
  \begin{pmatrix} \mu_0(t) \\ \mu_1(t) \\ \vdots \\ \mu_d(t) \end{pmatrix}. \notag
\end{align}
Now by Remark~\ref{R:EdHd}, we see that
\[
  \cN_d^{-1} = \begin{pmatrix} 1 & 0 \\ 0 & \cH_d \end{pmatrix},
\]
and by comparing entries of $\cH_d$ with Proposition~\ref{P:bracThakur} and~\eqref{E:aNddec}, we have proved the following proposition.

\begin{proposition} \label{P:newbrac}
Let $a \in A$ have degree $d \geqslant 0$.  Then for $0 \leqslant k \leqslant d$,
\[
  \brac{a}{k} = \sum_{j=k}^d \pd_{\theta}^j(a) \cdot h_{k,j-k} ([1], \dots, [k]).
\]
\end{proposition}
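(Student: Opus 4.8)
The plan is to read off the coefficients $\brac{a}{k}$ from Thakur's expansion $a(t) = \sum_k \brac{a}{k}\,\mu_k(t)$ (Proposition~\ref{P:bracThakur}) by passing through the intermediate basis $\{(t-\theta)^j\}_{0\le j\le d}$ of the space of polynomials in $t$ of degree at most $d$. On one side, $a(t)$ is a polynomial, hence regular at $t=\theta$, so Lemma~\ref{L:Taylor} gives $a(t) = \sum_{j=0}^d \pd_\theta^j(a)\,(t-\theta)^j$. On the other side I need the change-of-basis matrix from $\{\mu_k(t)\}$ to $\{(t-\theta)^j\}$; combining the two expressions and inverting that matrix should express each $\brac{a}{k}$ as an explicit linear combination of the $\pd_\theta^j(a)$.

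First I would expand $\mu_k(t)$ in powers of $t-\theta$. Writing each factor as $t-\theta^{q^i} = (t-\theta) - (\theta^{q^i}-\theta) = (t-\theta) - [i]$ and using $[0]=0$, one has $\mu_k(t) = (t-\theta)\prod_{i=1}^{k-1}\bigl((t-\theta) - [i]\bigr)$; expanding the product via the generating-function definition~\eqref{E:eijdef} of the $e_{ij}$ gives exactly~\eqref{E:mukdec1}, namely $\mu_k(t) = \sum_{j=1}^k (-1)^{k-j} e_{k-1,k-j}([1],\dots,[k-1])\,(t-\theta)^j$. Collecting these for $k=0,1,\dots,d$ produces the matrix identity~\eqref{E:mukdec2}, in which the lower-triangular matrix $\cN_d([1],\dots,[d-1])$ — block-diagonal with a $1$ and the matrix $\cE_d$ of Remark~\ref{R:EdHd} — sends the column vector of the $(t-\theta)^j$ to the column vector of the $\mu_k(t)$.

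Next I would feed this into the Taylor expansion. Regarding $a(t)$ as the row vector $\bigl(\pd_\theta^0(a),\dots,\pd_\theta^d(a)\bigr)$ paired against the column of the $(t-\theta)^j$, and then substituting $\cN_d^{-1}$ times the column of the $\mu_k(t)$ for that column as in~\eqref{E:aNddec}, I obtain $a(t)$ as a row vector times the column of the $\mu_k(t)$. By Remark~\ref{R:EdHd} — which records that $\cE_d\cH_d=I_d$, equivalently Proposition~\ref{P:symmrec2} — the inverse $\cN_d^{-1}$ is block-diagonal with a $1$ and the matrix $\cH_d$, whose entry in row $j$ and column $k$ equals $h_{k,j-k}([1],\dots,[k])$ (with the usual conventions making the off-triangular entries vanish). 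Therefore $a(t) = \sum_{k=0}^d\Bigl(\sum_{j=k}^d \pd_\theta^j(a)\,h_{k,j-k}([1],\dots,[k])\Bigr)\mu_k(t)$, and since $\{\mu_k(t)\}_{0\le k\le d}$ is a $\KK$-basis, comparing coefficients with Proposition~\ref{P:bracThakur} forces the inner sum to be $\brac{a}{k}$, which is the claim; the resulting expression lies in $A$ because each $\pd_\theta^j(a)$ and each $[i]$ does.

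The main obstacle I anticipate is purely the index bookkeeping in the matrix inversion: correctly accounting for the shift caused by the leading $1$ in $\cN_d$ (so that the $(j,k)$ entry of the $\cH_d$-block is $h_{k,j-k}$ with arguments $[1],\dots,[k]$, not $[1],\dots,[j]$), checking that the surviving summation range is exactly $k\le j\le d$ using $h_{k,j-k}=0$ for $j<k$ and $h_{k,0}=1$ on the diagonal, and making sure Proposition~\ref{P:symmrec2} is used in the form $\cE_d\cH_d=I_d$ rather than its transpose. Once the triangular structures are lined up, everything else is a direct substitution.
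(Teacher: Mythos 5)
Your proposal is correct and follows essentially the same route as the paper: Taylor-expanding $a(t)$ in powers of $t-\theta$ via Lemma~\ref{L:Taylor}, expanding the $\mu_k(t)$ via~\eqref{E:mukdec1}--\eqref{E:mukdec2}, inverting $\cN_d$ using $\cE_d\cH_d=I_d$ from Remark~\ref{R:EdHd}, and reading off the coefficients against Proposition~\ref{P:bracThakur}. The index bookkeeping you flag works out exactly as you describe, and the $k=0$ case (which the paper notes separately) is covered by the leading $1$ in the block structure together with the convention $h_{0,0}=1$.
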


\begin{remark}
The proof given above works only for $k \geqslant 1$, but the formula in the proposition is valid also when $k=0$.  In this case $\brac{a}{0} = a$, whereas the definition of $h_{ij}$ implies that $h_{0j} = 1$ if $j=0$ and that $h_{0j}=0$ if $j > 0$, and the formula holds.
\end{remark}

\begin{remark}
It is worth comparing the formula in Proposition~\ref{P:newbrac} with other formulas for $\brac{a}{k}$.  For example, there is the formula due to Carlitz~\cite[Thm.~2.1]{Carlitz35} that
\begin{equation} \label{E:EbracCarlitz}
  E_k(z) \assign \sum_{j=0}^k \frac{z^{q^j}}{D_j L_{k-j}^{q^j}} \in K[z] \quad \Rightarrow
  \quad \brac{a}{k} = E_k(a).
\end{equation}
By its definition as a coefficient of the polynomial $C_a \in A[\tau]$, we know that $\brac{a}{k} \in A$, but in contrast to Proposition~\ref{P:newbrac} this is not particularly clear from Carlitz's formula on its own.  Jeong~\cite{Jeong00} obtained additional formulas for $\brac{a}{k}$ through hyperdifferential identities involving $E_k(z)$.  See Proposition~\ref{P:Jeong} and the subsequent discussion for connections with Proposition~\ref{P:newbrac}.

In \cite[\S 8.10]{Thakur}, Thakur uses \eqref{E:EbracCarlitz} to give explicit formulas for special polynomials from Anderson's log-algebraicity theorem for the Carlitz module~\cite[Prop.~8]{And96}.  In \S\ref{S:Thakur}, we will reformulate Thakur's results (see Theorem~\ref{T:Thakur}) and use Proposition~\ref{P:newbrac} to design a new proof.
\end{remark}

We can also write $\brac{a}{k}$ in terms of symmetric polynomials in other ways that we will need.  The following proposition provides a different formula for $\brac{\theta^m}{k}$.

\begin{proposition} \label{P:bracthetam}
Let $m \geqslant 0$ and $0 \leqslant k \leqslant m$.  Then
\[
\brac{\theta^m}{k} = h_{k+1,m-k}\bigl( \theta, \theta^q, \dots, \theta^{q^k} \bigr).
\]
\end{proposition}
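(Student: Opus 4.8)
The plan is to mirror the derivation of Proposition~\ref{P:newbrac}, but to expand the basis polynomials $\mu_k(t)$ in powers of $t$ instead of in powers of $t-\theta$. By Proposition~\ref{P:bracThakur} applied to $a=\theta^m$, the quantities $\brac{\theta^m}{0},\dots,\brac{\theta^m}{m}$ are exactly the coordinates of $t^m$ with respect to the $\KK$-basis $\{\mu_k(t)\}$ of $\KK[t]$, so it is enough to write $t^m=\sum_{k=0}^m c_k\,\mu_k(t)$ explicitly and recognize $c_k=h_{k+1,m-k}(\theta,\theta^q,\dots,\theta^{q^k})$. Thus the whole argument reduces to inverting one triangular change of basis, just as in~\eqref{E:mukdec2}--\eqref{E:aNddec}.

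First I would expand, using~\eqref{E:eijdef},
\[
  \mu_k(t)=\prod_{\ell=0}^{k-1}\bigl(t-\theta^{q^\ell}\bigr)=\sum_{n=0}^k(-1)^{k-n}\,e_{k,k-n}\bigl(\theta,\theta^q,\dots,\theta^{q^{k-1}}\bigr)\,t^n,\qquad k\geq 0.
\]
Packaging this for $0\leq k\leq d$ with $d\geq m$ and matching the coefficients against the definition of $\cE_d$ in Remark~\ref{R:EdHd}, this reads, as the analogue of~\eqref{E:mukdec2},
\[
  \bigl(\mu_0(t),\dots,\mu_d(t)\bigr)^{\mathsf T}=\cE_{d+1}\bigl(\theta,\theta^q,\dots,\theta^{q^{d-1}}\bigr)\cdot\bigl(1,t,\dots,t^d\bigr)^{\mathsf T},
\]
where the index $i$ labels $\mu_i$ (resp.\ $t^i$) and the $r$-th variable is $\theta^{q^{r-1}}$; the shift from $\cE_d$ to $\cE_{d+1}$ occurs because $\mu_k$ carries $e_{k,\cdot}$ in $k$ variables while the $i$-th row of $\cE_d$ carries $e_{i-1,\cdot}$. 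Multiplying on the left by $\cH_{d+1}(\theta,\theta^q,\dots,\theta^{q^{d-1}})$ and applying the companion identity $\cH_d\cdot\cE_d=I_d$ recorded in Remark~\ref{R:EdHd} (the partner of Proposition~\ref{P:symmrec2}) gives
\[
  \bigl(1,t,\dots,t^d\bigr)^{\mathsf T}=\cH_{d+1}\bigl(\theta,\theta^q,\dots,\theta^{q^{d-1}}\bigr)\cdot\bigl(\mu_0(t),\dots,\mu_d(t)\bigr)^{\mathsf T}.
\]
Reading off the row indexed by $m$, and using that the $(m,k)$ entry of $\cH_{d+1}$ equals $h_{k+1,m-k}$ evaluated at its first $k+1$ variables $\theta,\theta^q,\dots,\theta^{q^k}$, we obtain $t^m=\sum_{k=0}^m h_{k+1,m-k}(\theta,\dots,\theta^{q^k})\,\mu_k(t)$; comparing with Proposition~\ref{P:bracThakur} and using that $\{\mu_k\}$ is a basis finishes the proof.

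The only real obstacle I anticipate is index bookkeeping: keeping straight the shift that produces $\cE_{d+1}$ (not $\cE_d$) and, correspondingly, seeing that it is $h_{k+1,m-k}$ rather than $h_{k,m-k}$ that emerges from $\cH_{d+1}$ --- exactly the same off-by-one care already exercised in the proof of Proposition~\ref{P:newbrac} --- together with a quick check of the boundary values $\brac{\theta^m}{0}=\theta^m=h_{1,m}(\theta)$ and $\brac{\theta^m}{m}=1=h_{m+1,0}$. As an independent verification, or as a fully self-contained alternative, one can instead expand $C_{\theta^m}=(\theta+\tau)^m$ directly in $\KK[\tau]$: the twisted monomials contributing to $\tau^k$ are precisely $\theta^{a_0}\tau\theta^{a_1}\tau\cdots\tau\theta^{a_k}$ with $a_0+\dots+a_k=m-k$ and all $a_j\geq 0$, and normalizing by $\tau c=c^q\tau$ turns each one into $\bigl(\prod_{j=0}^k(\theta^{q^j})^{a_j}\bigr)\tau^k$; summing over all such compositions of $m-k$ recovers $h_{k+1,m-k}(\theta,\theta^q,\dots,\theta^{q^k})$ straight from the monomial description of the complete homogeneous symmetric polynomials in~\eqref{E:hijdef}.
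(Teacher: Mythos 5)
Your main argument is correct and is essentially the paper's own proof, just organized as a matrix inversion: the paper verifies directly that $\sum_{k=0}^m h_{k+1,m-k}(\theta,\dots,\theta^{q^k})\,\mu_k(t)=t^m$ by expanding each $\mu_k(t)$ via \eqref{E:eijdef} and recognizing the inner sums as entries of $\cH_d\cdot\cE_d=I_d$, which is exactly the companion identity you invoke after left-multiplying by $\cH_{d+1}$; your index bookkeeping (the shift to $\cE_{d+1}$, $\cH_{d+1}$ and the resulting $h_{k+1,m-k}$ in the variables $\theta,\dots,\theta^{q^k}$) is right. The alternative you append is, however, genuinely different and worth highlighting: expanding $C_{\theta^m}=(\theta+\tau)^m$ in $\KK[\tau]$ and pushing each $\tau$ past the $\theta$'s via $\tau c=c^q\tau$ identifies the coefficient of $\tau^k$ as the sum of $\prod_{j=0}^k(\theta^{q^j})^{a_j}$ over compositions $a_0+\dots+a_k=m-k$, which is $h_{k+1,m-k}(\theta,\theta^q,\dots,\theta^{q^k})$ straight from the monomial description of $h_{ij}$. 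That route is completely elementary, bypasses Proposition~\ref{P:bracThakur} and the $t$-motive basis $\{\mu_k\}$ altogether, and makes the appearance of the complete homogeneous symmetric polynomial combinatorially transparent; what it does not give you is the uniform change-of-basis picture that the paper reuses elsewhere (e.g.\ in Proposition~\ref{P:newbrac}).
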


\begin{proof}
The essential argument is to expand $t^m$ in terms of the polynomials $\mu_k(t)$ and use Proposition~\ref{P:bracThakur}.  To do this we consider
\begin{align*}
  \sum_{k=0}^m h_{k+1,m-k} \bigl( \theta, \dots, \theta^{q^k} \bigr) \mu_k(t)
  &= \sum_{k=0}^m h_{k+1,m-k} \bigl( \theta, \dots, \theta^{q^k} \bigr) \bigl( t-\theta \bigr) \cdots \bigl( t-\theta^{q^{k-1}} \bigr) \\
  &= \begin{aligned}[t]
  \sum_{k=0}^m\, &h_{k+1,m-k} \bigl( \theta, \dots, \theta^{q^k} \bigr) \\
    &{} \times \sum_{j=0}^k (-1)^{k-j} e_{k,k-j} \bigl(\theta, \dots, \theta^{q^{k-1}}\bigr) t^j
  \end{aligned}
  \\
  &= \sum_{j=0}^m t^j \sum_{k=j}^m (-1)^{k-j} h_{k+1,m-k} \cdot e_{k,k-j}.
\end{align*}
Now the inner sum is the same as the entry in row $m+1$ and column $j+1$ in the matrix product $\cH_d \cdot \cE_d = I_d$ from Remark~\ref{R:EdHd} (with any $d > m$).  Thus the inner sum is $1$ if $j=m$ and $0$ otherwise, so
\[
  \sum_{k=0}^m h_{k+1,m-k} \bigl( \theta, \dots, \theta^{q^k} \bigr) \mu_k(t)
  = h_{m+1,0}\bigl( \theta, \dots, \theta^{q^m} \bigr) \cdot e_{m,0} \bigl( \theta, \dots, \theta^{q^{m-1}} \bigr) \cdot t^m = t^m,
\]
and the result follows from Proposition~\ref{P:bracThakur}.
\end{proof}

As was pointed out by the referee, Proposition~\ref{P:newbrac} is similar to work of Jeong~\cite[Cor.~1]{Jeong00}, and below we give another proof of Proposition~\ref{P:newbrac} that follows from a combination of Jeong's results and Proposition~\ref{P:bracthetam}.  The proof below requires also Lemma~\ref{L:ehdiff} from later in the paper, whose proof is independent of any of these considerations.  However, for the sake of exposition we have left Lemma~\ref{L:ehdiff} where it was in the original version of this paper, and we refer the reader to its proof in \S\ref{S:Thakur}.  Recalling $E_k(z)$ from~\eqref{E:EbracCarlitz}, Jeong proved the following.

\begin{proposition}[{Jeong~\cite[Cor.~1]{Jeong00}}] \label{P:Jeong}
For $j$, $k \geqslant 0$, set $B_{j,k} \assign \sum_{i=0}^{j} (-1)^{j-i} \pd_{\theta}^i(\theta^j) E_k(\theta^i)$.  Then for $u \in \power{\FF_q}{\theta}$ we have
\[
  E_k(u) = \sum_{j=0}^\infty B_{j,k} \pd_{\theta}^j(u).
\]
\end{proposition}

\begin{proof}[Second proof of Proposition~\ref{P:newbrac}]
Because $\brac{a}{k} = E_k(a)$ by~\eqref{E:EbracCarlitz} and since $h_{ij} = 0$ for $j < 0$, it suffices by Proposition~\ref{P:Jeong} to show that $B_{j,k} = h_{k,j-k}([1], \dots, [k])$ for $k \leqslant j \leqslant d$.  Combining the definition of $B_{j,k}$ with \eqref{E:EbracCarlitz}, we see
\[
  B_{j,k} = \sum_{i=k}^j (-1)^{j-i} \binom{j}{i} \theta^{j-i} \brac{\theta^i}{k}
  = \sum_{i=k}^j (-1)^{j-i} \binom{j}{i} \theta^{j-i} h_{k+1,i-k} \bigl( \theta, \theta^q, \dots, \theta^{q^k} \bigr),
\]
where the second equality follows from Proposition~\ref{P:bracthetam}.  First, taking $i \leftarrow i+k$ to reindex the sum and then using $d \leftarrow k+1$, $j \leftarrow i$, $k \leftarrow j-k$ in Lemma~\ref{L:ehdiff}(b), we find
\begin{align*}
  B_{j,k} &= \sum_{i=0}^{j-k} (-1)^{j-i-k} \binom{j}{i+k} \theta^{j-i-k} h_{k+1,i} \bigl( \theta, \theta^q, \dots, \theta^{q^k} \bigr) \\
  &= (-1)^{j-k} h_{k+1,j-k} \bigl(T-\theta, \dots, T-\theta^{q^k} \bigr)\big|_{T=\theta}.
\end{align*}
Finally from~\eqref{E:hijspec0} we see that $B_{j,k} = h_{k,j-k}([1], \dots, [k])$.
\end{proof}

\begin{remark}
It is possible to extend Propositions~\ref{P:newbrac} and~\ref{P:bracthetam} to multiplication coefficients of tensor powers of the Carlitz module, and the reader is directed to~\cite[Ch.~3]{PLogAlg} for further details.
\end{remark}

\section{Vandermonde matrices and a theorem of Voloch} \label{S:Vandermonde}

In~\cite{Voloch98}, Voloch proved the following proposition that relates $q$-th powers of power series over $\FF_q$ to their hyperderivatives.  Voloch's proof is straightforward, but below we give another proof that then lends itself well to generalizations for our purposes.

\begin{proposition}[{Voloch~\cite{Voloch98}}] \label{P:Voloch}
Let $g \in \power{\FF_q}{\theta}$.  Then for $k \geqslant 0$,
\[
  g^{q^k} = \sum_{j=0}^{\infty} \pd_{\theta}^j(g) \cdot [k]^j,
\]
where by convention we set $[0]^0 = 1$.
\end{proposition}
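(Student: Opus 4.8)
The plan is to reduce Voloch's identity to the Taylor expansion of Lemma~\ref{L:Taylor} together with the elementary fact that over $\FF_q$ the $q^k$-th power map is $\FF_q$-linear and sends $\theta$ to $\theta^{q^k}$. First I would introduce a second variable $t$ and observe that, for $g \in \power{\FF_q}{\theta}$, the series $g(t) \in \power{\FF_q}{t}$ obtained by replacing $\theta$ by $t$ is regular at $t = \theta$ (indeed $g(t)|_{t=\theta} = g(\theta)$), so Lemma~\ref{L:Taylor} applies and gives
\[
  g(t) = \sum_{j=0}^{\infty} \pd_t^j(g)\big|_{t=\theta} \cdot (t-\theta)^j = \sum_{j=0}^{\infty} \pd_{\theta}^j(g) \cdot (t-\theta)^j,
\]
where in the last step I use that $\pd_t^j(g(t))|_{t=\theta} = \pd_\theta^j(g)(\theta)$ since $g(t)$ is literally the same formal power series with $t$ in place of $\theta$.

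Next I would specialize this identity at $t = \theta^{q^k}$. The key point is that raising to the $q^k$-th power is a ring homomorphism on $\power{\FF_q}{\theta}$ (the coefficients lie in $\FF_q$, so they are fixed by $\tau^k$), hence $g(\theta)^{q^k} = g(\theta^{q^k})$: applying $\tau^k$ to $g$ has the effect of substituting $\theta \mapsto \theta^{q^k}$ into $g$. Substituting $t = \theta^{q^k}$ into the Taylor expansion above, and noting $t - \theta = \theta^{q^k} - \theta = [k]$ under this specialization, yields
\[
  g^{q^k} = g(\theta^{q^k}) = \sum_{j=0}^{\infty} \pd_{\theta}^j(g) \cdot [k]^j,
\]
which is exactly the claimed formula, with the convention $[0]^0 = 1$ covering the case $k = 0$ (where the right-hand side collapses to $\pd_\theta^0(g) = g$).

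The only genuinely delicate point is the convergence and legitimacy of the substitution $t \mapsto \theta^{q^k}$ in an identity of elements of $\power{F(\theta)}{t-\theta}$: I should check that the Taylor series, viewed in the $(t-\theta)$-adic topology, specializes correctly. Since $[k] = \theta^{q^k} - \theta$ has positive valuation at the place corresponding to $t - \theta$ — or, more concretely, since $\pd_\theta^j(g) \cdot [k]^j$ has $\theta$-adic valuation tending to infinity with $j$ (as $[k]$ is divisible by $\theta$... more carefully, $[k] = \theta^{q^k}-\theta$ is divisible by $\theta$, so each hyperderivative $\pd_\theta^j(g)$ is a power series in $\theta$ and $[k]^j$ contributes $\theta$-adic valuation at least $j$) — the sum converges in $\power{\FF_q}{\theta}$, and the substitution is valid. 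This is routine, and for an alternative, cleaner formulation one can simply say: both sides are elements of $\power{\FF_q}{\theta}$, and the displayed $(t-\theta)$-adic identity holds for the generic $t$, so evaluating the continuous substitution homomorphism $F(\theta)[[t-\theta]] \to F(\theta)$, $t \mapsto \theta^{q^k}$ (continuous because $\theta^{q^k} - \theta \neq 0$ lies in the maximal ideal after completing appropriately) gives the result. I would present the argument in the first, more concrete form to keep it self-contained.
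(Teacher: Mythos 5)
Your argument is correct and is essentially the paper's own proof: both rest on the Taylor expansion $g(t) = \sum_j \pd_\theta^j(g)(t-\theta)^j$ from Lemma~\ref{L:Taylor} together with the fact that the $q^k$-power map on $\power{\FF_q}{\theta}$ is the substitution $\theta \mapsto \theta^{q^k}$. The only cosmetic difference is that the paper first replaces $t$ by $t^{q^k}$ (writing $g(t)^{q^k} = g(t^{q^k})$) and then sets $t = \theta$, whereas you substitute $t = \theta^{q^k}$ directly and justify the $\theta$-adic convergence; the two are trivially equivalent.
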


\begin{proof}
By substituting $\theta \leftarrow t$ into $g$, the definition of hyperderivatives yields
\[
  g(t) = \sum_{j=0}^{\infty} \pd_{\theta}^j(g) (t-\theta)^j.
\]
Then for $k \geqslant 0$,
\[
  g(t)^{q^k} = g\bigl( t^{q^k} \bigr)  = \sum_{j=0}^{\infty} \pd_{\theta}^j(g) (t^{q^k} - \theta)^j.
\]
Upon substituting $t \leftarrow \theta$, we obtain the result.
\end{proof}

Fixing $i \geqslant 0$, for variables $x_0, \dots, x_i$ we define the $(i+1) \times (i+1)$ Vandermonde matrix,
\begin{equation}
  V_i(x_0, \dots, x_i) = \begin{pmatrix}
  1 & x_0 & \cdots & x_0^i \\
  1 & x_1 & \cdots & x_1^i \\
  \vdots & \vdots & & \vdots \\
  1 & x_i & \cdots & x_i^i
  \end{pmatrix}.
\end{equation}
For a polynomial $a \in A$ of degree at most $i$, as in the proof of Proposition~\ref{P:Voloch}, we have identities for $k \geqslant 0$,
\[
  a(t)^{q^k} = a\bigl( t^{q^k} \bigr) = \sum_{j=0}^i \pd_{\theta}^j(a) \bigl(t^{q^k} - \theta \bigr)^j.
\]
This yields a matrix identity
\begin{equation}
\begin{pmatrix} a(t) \\ a(t)^q \\ \vdots \\ a(t)^{q^i} \end{pmatrix}
= V_i \bigl( t-\theta, t^q - \theta, \dots, t^{q^i} -\theta \bigr)
\begin{pmatrix} a \\ \pd_{\theta}^1(a) \\ \vdots \\ \pd_{\theta}^i(a) \end{pmatrix}.
\end{equation}
Inverting the Vandermonde matrix, we have
\begin{equation}
\begin{pmatrix} a \\ \pd_{\theta}^1(a) \\ \vdots \\ \pd_{\theta}^i(a) \end{pmatrix}
= V_i \bigl( t-\theta, t^q - \theta, \dots, t^{q^i} -\theta \bigr)^{-1}
\begin{pmatrix} a(t) \\ a(t)^q \\ \vdots \\ a(t)^{q^i} \end{pmatrix}.
\end{equation}
Notably the left-hand side involves only the variable $\theta$, implying that the expression on the right-hand side is independent of $t$.  The inverse of $V_i$ can be obtained through various means (e.g., see \cite[p.~36]{Knuth}, \cite[Thm.~3.2]{OrucAkmaz04}), and setting
\[
  V_i(x_0, \dots, x_i)^{-1} = \bigl( \kappa_{ij\ell}(x_0, \dots, x_i) \bigr)_{0 \leqslant j \leqslant i,\, 0 \leqslant \ell \leqslant i},
\]
the entries $\kappa_{ij\ell}$ can  be expressed in terms of elementary symmetric functions,
\begin{equation} \label{E:kappadef}
\kappa_{ij\ell}(x_0, \dots, x_i) = \frac{(-1)^{i-j} e_{i,i-j}(x_0, \dots, \widehat{x_{\ell}}, \dots, x_i)}{\prod_{m \neq \ell} (x_{\ell} - x_m)}.
\end{equation}
The following proposition is then immediate.

\begin{proposition} \label{P:hyperkappa}
For $i \geqslant 0$, let $a \in A$ satisfy $\deg a \leqslant i$.
\begin{enumerate}
\item[(a)] For $0 \leqslant j \leqslant i$,
\[
  \pd_{\theta}^j(a) = \sum_{\ell=0}^{i} \kappa_{ij\ell} \bigl( t-\theta, t^q-\theta, \dots, t^{q^i} - \theta \bigr) a(t)^{q^{\ell}}.
\]
\item[(b)] More generally, for $0 \leqslant j \leqslant i$ and $k \geqslant 0$,
\[
  \pd_{\theta}^j(a)^{q^k} = \sum_{\ell=0}^{i} \kappa_{ij\ell} \bigl( t-\theta^{q^k}, t^q-\theta^{q^k}, \dots, t^{q^i} - \theta^{q^k} \bigr) a(t)^{q^{\ell}}.
\]
\end{enumerate}
\end{proposition}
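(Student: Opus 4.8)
The plan is to read off both parts directly from the Vandermonde matrix identity established just above the statement, together with the explicit formula~\eqref{E:kappadef} for the entries $\kappa_{ij\ell}$ of $V_i^{-1}$. For part~(a), I would start from the identity
\[
  \begin{pmatrix} a \\ \pd_{\theta}^1(a) \\ \vdots \\ \pd_{\theta}^i(a) \end{pmatrix}
  = V_i \bigl( t-\theta, t^q-\theta, \dots, t^{q^i} - \theta \bigr)^{-1}
  \begin{pmatrix} a(t) \\ a(t)^q \\ \vdots \\ a(t)^{q^i} \end{pmatrix},
\]
which is valid for any $a \in A$ with $\deg a \leqslant i$ because $a(t) = \sum_{j=0}^i \pd_\theta^j(a)(t-\theta)^j$ by Lemma~\ref{L:Taylor} (the series terminates at $j=i$ since $\deg_\theta a \leqslant i$) and hence $a(t)^{q^k} = a(t^{q^k}) = \sum_{j=0}^i \pd_\theta^j(a)(t^{q^k}-\theta)^j$ for each $k$, which is exactly the matrix equation $\bigl(a(t)^{q^k}\bigr)_k = V_i(t-\theta,\dots,t^{q^i}-\theta)\bigl(\pd_\theta^j(a)\bigr)_j$ before inversion. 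Extracting the $j$-th row of the inverted identity gives
\[
  \pd_\theta^j(a) = \sum_{\ell=0}^i \kappa_{ij\ell}\bigl(t-\theta, t^q-\theta, \dots, t^{q^i}-\theta\bigr)\, a(t)^{q^\ell},
\]
which is part~(a). The only point to note is that the Vandermonde matrix $V_i(t-\theta, t^q-\theta,\dots,t^{q^i}-\theta)$ is invertible over the relevant function field: its nodes $t^{q^m}-\theta$ are pairwise distinct as elements of $\FF_q(\theta,t)$, so the Vandermonde determinant $\prod_{m<m'}\bigl((t^{q^{m'}}-\theta)-(t^{q^m}-\theta)\bigr) = \prod_{m<m'}(t^{q^{m'}}-t^{q^m})$ is nonzero, and formula~\eqref{E:kappadef} makes sense with these substitutions.

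For part~(b), I would apply the Frobenius $\tau$ (the $q$-th power map) $k$ times to the identity of part~(a). Since $\tau$ is a ring homomorphism fixing $\FF_q$ and sending $\theta\mapsto\theta^q$, $t\mapsto t^q$, raising part~(a) to the $q^k$-th power yields
\[
  \pd_\theta^j(a)^{q^k} = \sum_{\ell=0}^i \kappa_{ij\ell}\bigl(t-\theta, t^q-\theta,\dots,t^{q^i}-\theta\bigr)^{q^k}\, a(t)^{q^{\ell+k}}.
\]
Because each $\kappa_{ij\ell}$ is a rational function with coefficients in $\FF_q$ (indeed $e_{i,i-j}$ has integer, hence $\FF_q$-rational, coefficients), applying the $q^k$-th power to $\kappa_{ij\ell}$ evaluated at $(t^{q^m}-\theta)_m$ simply replaces each argument $t^{q^m}-\theta$ by $\bigl(t^{q^m}-\theta\bigr)^{q^k} = t^{q^{m+k}}-\theta^{q^k}$; that is,
\[
  \kappa_{ij\ell}\bigl(t-\theta,\dots,t^{q^i}-\theta\bigr)^{q^k} = \kappa_{ij\ell}\bigl(t^{q^k}-\theta^{q^k}, t^{q^{k+1}}-\theta^{q^k},\dots,t^{q^{i+k}}-\theta^{q^k}\bigr).
\]
Now I would observe that part~(a), being an identity valid for all $t$ transcendental over $\FF_q(\theta)$ (the left side is independent of $t$), remains valid after the substitution $t\leftarrow t^{1/q^k}$, equivalently: the right-hand side of the displayed power identity, as a function of the single transcendental $s := t^{q^k}$, has exactly the form asserted in part~(b) with $t$ renamed. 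Concretely, replacing $t$ by $t^{1/q^k}$ throughout (legitimate since the final expression in~(b) only involves $t$ through $t, t^q, \dots, t^{q^i}$ matched against $\theta^{q^k}$) gives
\[
  \pd_\theta^j(a)^{q^k} = \sum_{\ell=0}^i \kappa_{ij\ell}\bigl(t-\theta^{q^k}, t^q-\theta^{q^k},\dots,t^{q^i}-\theta^{q^k}\bigr)\, a(t)^{q^\ell},
\]
using that $a\bigl(t^{1/q^k}\bigr)^{q^{\ell+k}} = a(t^{q^\ell}) = a(t)^{q^\ell}$. This is precisely part~(b).

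The computation is entirely routine once the bookkeeping of Frobenius twists is set up; the one place to be careful — and what I would treat as the main (mild) obstacle — is the substitution argument in part~(b). One must be clear that the identity in part~(a) is an equality of rational functions in the independent transcendental $t$, so that substituting $t\mapsto t^{q^k}$ (to pass from~(a) to the twisted form) and then $t\mapsto t^{1/q^k}$ (to return to a clean statement in $t$) are both legitimate over $\overline{\FF_q(\theta)}$, and that the arguments of $\kappa_{ij\ell}$ transform as claimed. An alternative that avoids the inverse substitution is to note that since the left-hand side of~(a) does not depend on $t$, the identity holds with $t$ replaced by any element transcendental over $\FF_q(\theta)$; applying $\tau^k$ and then renaming the transcendental $t^{q^k}$ back to $t$ gives~(b) directly. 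Either way, no new ideas beyond Proposition~\ref{P:Voloch}-style Frobenius manipulation and the explicit Vandermonde inverse are needed, so the proposition is indeed immediate from what precedes it.
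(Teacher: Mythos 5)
Your proof is correct and follows essentially the same route as the paper: part (a) is read off from inverting the Vandermonde identity exactly as in the text, and part (b) is a Frobenius twist of (a). The only difference is that for (b) the paper proceeds more directly --- since $\pd_{\theta}^j(a) \in \FF_q[\theta]$, its $q^k$-th power is obtained by the substitution $\theta \leftarrow \theta^{q^k}$ alone (leaving $t$ fixed), and making that same substitution on the right-hand side of (a) immediately produces the stated formula, so the detour through raising the whole identity to the $q^k$-th power and then undoing the twist on $t$ via $t \mapsto t^{1/q^k}$ is not needed; your version of that step is nevertheless legitimate for the reasons you give.
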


\begin{proof}
The proof of (a) has already been shown.  For (b), we apply a Frobenius twist.  That is, we need only observe that $\pd_{\theta}^j(a)^{q^k}$, as it is a polynomial in $\FF_q[\theta]$, is obtained from $\pd_{\theta}^j(a)$ by replacing $\theta \leftarrow \theta^{q^k}$.  Thus making the same replacement on the right-hand side of (a) will yield the desired result.
\end{proof}

\begin{remark}
Of note in part (b) of the proposition is that the degree in $t$ on the right-hand side is bounded independently from $k$.
\end{remark}

\begin{remark} \label{R:kappasubbed}
We see from \eqref{E:kappadef} that
\begin{multline}
\kappa_{ij\ell} \bigl( t-\theta^{q^k}, t^q-\theta^{q^k}, \dots, t^{q^i} - \theta^{q^k} \bigr) \\
= \frac{(-1)^{i-j} e_{i,i-j}\bigl( t-\theta^{q^k}, \dots, t^{q^{\ell-1}} - \theta^{q^k}, t^{q^{\ell+1}} - \theta^{q^k}, \dots, t^{q^i}-\theta^{q^k} \bigr)}
{ \bigl( t^{q^\ell} - t\bigr) \cdots \bigl( t^{q^\ell} - t^{q^{\ell-1}} \bigr) \bigl( t^{q^\ell} - t^{q^{\ell+1}} \bigr) \cdots \bigl( t^{q^\ell} - t^{q^i} \bigr)},
\end{multline}
which will be important for calculations in \S\ref{S:Sums}--\ref{S:Thakur}.
\end{remark}

\section{Hyperderivative power sums} \label{S:Sums}

Power sums of polynomials in $A$ are defined for $i \geqslant 0$ and $k \in \ZZ$ by setting
\begin{equation}
  S_i(k) = \sum_{a \in A_{i+}} a^k.
\end{equation}
Research on vanishing criteria and precise formulas for these sums goes back at least to Carlitz~\cite{Carlitz35}, \cite{Carlitz37}, and Lee~\cite{Lee43}, and it has been continued by several authors, \cite{AnglesPellarin14}, \cite{Gekeler88a}, \cite{Goss78}, \cite{Perkins14c}, \cite[Ch.~5]{Thakur}, \cite{Thakur09}.  More generally, one can ask for similar results about \emph{hyperderivative power sums} of the form
\[
  H_i(j_1, \dots, j_\ell; k_1, \dots, k_{\ell}) = \sum_{a \in A_{i+}} \pd_{\theta}^{j_1}(a)^{k_1} \cdots \pd_{\theta}^{j_\ell}(a)^{k_{\ell}},
\]
for $j_1, \dots, j_\ell \geqslant 0$ and $k_1, \dots, k_{\ell} \in \ZZ$.  We will explore some results for reasonably simple hyperderivative power sums in this section, but they are studied in more depth in~\cite[Ch.~5, 8--9]{PLogAlg}.

In this section the following results on $S_i(k)$ for $k \geqslant 0$ are fundamental.  For $k \geqslant 0$, if we write $k = \sum_{j=0}^s k_j q^j$ with $0 \leqslant k_j \leqslant q-1$, then we let
\[
  \sigma_q(k) \assign k_0 + \dots + k_s
\]
be the sum of its digits base $q$.

\begin{theorem}[{Carlitz~\cite[Thm.~9.5]{Carlitz35}, \cite[p.~497]{Carlitz37}; Lee~\cite[Lem.~7.1]{Lee43}; see also Gekeler~\cite[Cor.~2.12]{Gekeler88a} and Thakur~\cite[Thm.~5.1.2]{Thakur}}] \label{T:Sivanish}
Let $i \geqslant 0$.  For $k \geqslant 0$, if $\sigma_q(k) < i(q-1)$, then $S_i(k) = 0$.  Moreover, if $k< q^i -1$, then $S_i(k) = 0$.
\end{theorem}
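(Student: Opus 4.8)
The plan is to prove both vanishing statements by a generating-function argument in an auxiliary variable, exploiting the fact that summing a polynomial identity over all monic $a \in A_{i+}$ can be carried out coefficient-by-coefficient. The key observation is that a monic $a$ of degree $i$ can be written as $a = \theta^i + a_{i-1}\theta^{i-1} + \cdots + a_1\theta + a_0$ with the $a_j$ ranging independently over $\FF_q$, so that
\[
  S_i(k) = \sum_{a_0, \dots, a_{i-1} \in \FF_q} \bigl( \theta^i + a_{i-1}\theta^{i-1} + \cdots + a_0 \bigr)^k.
\]
First I would expand the $k$-th power multinomially; a given monomial $\theta^{ic_i} a_{i-1}^{c_{i-1}} \cdots a_0^{c_0}$ with $c_i + c_{i-1} + \cdots + c_0 = k$ survives the sum over $\FF_q^i$ only if every exponent $c_0, \dots, c_{i-1}$ is a positive multiple of $q-1$ (using $\sum_{a \in \FF_q} a^c = -1$ if $(q-1) \mid c$, $c>0$, and $0$ otherwise, together with $\sum_{a\in\FF_q} a^0 = q\cdot 1 = 0$ in characteristic $p$). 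Hence a nonzero contribution forces $c_{i-1} + \cdots + c_0 \geqslant i(q-1)$, whence $c_i = k - (c_{i-1}+\cdots+c_0) \leqslant k - i(q-1)$. If $k < i(q-1)$ this is already impossible and $S_i(k)=0$; but that is weaker than claimed, so the multiplicativity of the digit sum must be brought in.

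The refinement to the hypothesis $\sigma_q(k) < i(q-1)$ uses the Lucas-type behavior of multinomial coefficients mod $p$: the multinomial coefficient $\binom{k}{c_0, \dots, c_i}$ is nonzero in $\FF_p$ only if the base-$q$ digit additions $c_0 + c_1 + \cdots + c_i = k$ proceed with no carries (Kummer's theorem, equivalently Lucas's theorem applied digitwise). When there are no carries, $\sigma_q(k) = \sigma_q(c_0) + \cdots + \sigma_q(c_i) \geqslant \sigma_q(c_0) + \cdots + \sigma_q(c_{i-1})$. Now for each $j \leqslant i-1$ the requirement $(q-1) \mid c_j$ with $c_j > 0$ forces $\sigma_q(c_j) \geqslant q-1$ (any positive multiple of $q-1$ has digit sum a positive multiple of $q-1$, hence at least $q-1$). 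Combining, a surviving term needs $\sigma_q(k) \geqslant i(q-1)$, so under $\sigma_q(k) < i(q-1)$ every term vanishes and $S_i(k)=0$. For the second clause, $k < q^i - 1$ means $k$ has at most $i$ base-$q$ digits each at most $q-1$ but not all equal to $q-1$, so $\sigma_q(k) \leqslant i(q-1) - 1 < i(q-1)$ (with the edge case $k = q^i-1$ itself excluded), and the first clause applies — actually one should check $\sigma_q(k) < i(q-1)$ directly: if $k < q^i-1$ then either $k$ has fewer than $i$ nonzero digit-positions or some digit is $< q-1$, and in all such cases $\sigma_q(k) \leqslant i(q-1)-1$.

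The main obstacle I anticipate is getting the carry-free / digit-sum bookkeeping completely airtight, in particular the step "$(q-1)\mid c_j$, $c_j>0$ $\Rightarrow$ $\sigma_q(c_j)\geqslant q-1$" and its interaction with the no-carry condition; one must be careful that the no-carry hypothesis is what makes $\sigma_q$ additive over the $c_j$, and that the $c_j>0$ conclusion really is forced (the monomial with some $c_j = 0$ contributes $\sum_{a_j \in \FF_q} 1 = 0$ in characteristic $p$, so those terms drop too, which is what lets us demand $c_j > 0$ for \emph{all} $j \leqslant i-1$). An alternative, cleaner route avoiding multinomial coefficients entirely is to argue by descending induction on $i$: write $a = \theta a' + a_0$ or peel off the top coefficient, reducing $S_i(k)$ to an $\FF_q$-sum of $S_{i-1}$-type sums with shifted arguments; but the digit-sum hypothesis does not transform transparently under that recursion, so I expect the direct multinomial computation above to be the most efficient, and I would present that, citing Carlitz and Lee for the original arguments and Gekeler and Thakur for modern treatments as in the statement.
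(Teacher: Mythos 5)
The paper does not prove Theorem~\ref{T:Sivanish} at all --- it is quoted as a classical result of Carlitz, Lee, Gekeler, and Thakur --- so there is no in-paper argument to compare yours against; I will assess your proof on its own. Your route is the standard one from the cited literature: parametrize $A_{i+}$ by the $i$ free coefficients $a_0,\dots,a_{i-1}$, expand $a^k$ multinomially, and use $\sum_{c\in\FF_q}c^n=-1$ if $n>0$ and $(q-1)\mid n$, and $=0$ otherwise (including $n=0$, where the sum is $q=0$). The skeleton is sound, the forcing of $c_j>0$ and $(q-1)\mid c_j$ for $j\leqslant i-1$ is handled correctly, $\sigma_q(c_j)\geqslant q-1$ follows correctly from $\sigma_q(c_j)\equiv c_j\equiv 0\ (\mathrm{mod}\ q-1)$ with $\sigma_q(c_j)>0$, and the reduction of the second clause to the first via $k<q^i-1\Rightarrow\sigma_q(k)\leqslant i(q-1)-1$ is right. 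The one step you must tighten is the appeal to ``Lucas applied digitwise in base $q$'': Lucas/Kummer controls $\binom{k}{c_0,\dots,c_i}\bmod p$ by carries in base $p$, not base $q$, and $\sigma_p$ and $\sigma_q$ are not comparable in general (for $q=p^e$ one has $\sigma_q(q-1)=q-1$ while $\sigma_p(q-1)=e(p-1)$), so you cannot simply swap bases. The needed bridge is the easy observation that an addition with no base-$p$ carries also has no base-$q$ carries: grouping the base-$p$ digits $e$ at a time, if every base-$p$ column of the summands totals at most $p-1$ then every base-$q$ column totals at most $\sum_{l=0}^{e-1}(p-1)p^l=q-1$. (For more than two summands, note the base-$p$ carry deficits telescope over pairwise additions, so total deficit zero forces each pairwise addition to be carry-free.) Hence a nonzero multinomial coefficient mod $p$ gives $\sigma_q(k)=\sum_{j=0}^{i}\sigma_q(c_j)\geqslant\sum_{j=0}^{i-1}\sigma_q(c_j)\geqslant i(q-1)$, exactly as you claim. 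With that bridge made explicit the proof is complete.
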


\begin{theorem}[{Carlitz~\cite[p.~941]{Carlitz39}; Lee~\cite[Thm.~4.1]{Lee43}; see also Gekeler~\cite[Thm.~4.1, Rmk.~6.6]{Gekeler88a}}] \label{T:CarlitzLee}
Let $i \geqslant 0$.  Suppose $1 \leqslant s \leqslant q-1$ and $k = q^{\ell_1} + \cdots + q^{\ell_s} - 1 \geqslant 0$.
\begin{enumerate}
\item[(a)] If $\ell_r < i$ for some $r=1, \dots, s$, then $S_i(k)=0$.
\item[(b)] If $\ell_r \geqslant i$ for all $r = 1, \dots, s$, then
\[
  S_i(k) = \frac{1}{L_i} \prod_{r=1}^s \frac{D_{\ell_r}}{D_{\ell_r-i}^{q^i}}.
\]
\item[(c)] In particular,
\[
  S_i(q^{\ell+i}-1) = \frac{D_{\ell+i}}{L_i D_{\ell}^{q^i}}.
\]
\end{enumerate}
\end{theorem}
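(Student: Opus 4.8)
The plan is to handle the three parts in order of increasing difficulty: part~(a) will follow immediately from Theorem~\ref{T:Sivanish}, part~(c) is the case $s=1$ of part~(b), and part~(b) I would prove by a direct expansion. For part~(a), suppose $\ell_r<i$ for some $r$ and set $\ell_0=\min_r\ell_r\le i-1$. Since $s\le q-1$, forming $k+1 = q^{\ell_1}+\dots+q^{\ell_s}$ in base $q$ produces no carries, and a short computation with the base-$q$ expansion of $k=(k+1)-1$ gives $\sigma_q(k) = \ell_0(q-1)+(s-1)$; hence $\sigma_q(k)\le(i-1)(q-1)+(q-2)<i(q-1)$, and Theorem~\ref{T:Sivanish} yields $S_i(k)=0$.

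For part~(b) I would compute $S_i(k)$ head on. Parametrize $A_{i+}$ as $\{\theta^i+b\}$ with $b=\sum_{j=0}^{i-1}c_j\theta^j$ ranging over the $\FF_q$-span $V_i$ of $1,\theta,\dots,\theta^{i-1}$; expanding $(\theta^i+b)^k$ by the multinomial theorem and summing each $c_j$ over $\FF_q$ using $\sum_{c\in\FF_q}c^n=-1$ when $n\ge 1$ and $(q-1)\mid n$ and $0$ otherwise gives
\[
  S_i(k) = (-1)^i\sum\binom{k}{n_0,\dots,n_i}\,\theta^{\,n_1+2n_2+\dots+in_i},
\]
the sum over $(n_0,\dots,n_i)$ with $n_0,\dots,n_{i-1}$ positive multiples of $q-1$, with $n_i\ge 0$, and with $n_0+\dots+n_i=k$ (here $\binom{k}{n_0,\dots,n_i}$ is the multinomial coefficient). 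By Lucas's theorem this coefficient is nonzero in $\FF_q$ exactly when the base-$p$ addition $n_0+\dots+n_i$ carries nowhere. The hypothesis $\ell_r\ge i$ for all $r$ enters precisely here: it forces the base-$q$ digits of $k$ in positions $0,\dots,i-1$ to all equal $q-1$, equivalently the base-$p$ digits of $k$ in positions $0,\dots,fi-1$ to all equal $p-1$, where $q=p^f$. Under this constraint the no-carry condition cuts the sum down to an explicit finite family of tuples: writing $n_j=(q-1)m_j$ for $j<i$, the $m_j$ must add up in base $q$ without carry, so they encode a surjective-type assignment of the ``digit slots'' of $k+1$ among the indices $0,\dots,i-1$ (with $n_i$ absorbing the remainder). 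Substituting these tuples back and simplifying the resulting alternating sum of monomials --- using the identities $D_\ell=[\ell]\,D_{\ell-1}^{q}$ and $L_i=-[i]\,L_{i-1}$ together with a telescoping --- should collapse it to $\frac{1}{L_i}\prod_{r=1}^s\frac{D_{\ell_r}}{D_{\ell_r-i}^{q^i}}$.

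The hard part is this final combinatorial step: identifying the surviving tuples exactly (the carry bookkeeping is genuinely delicate when $f>1$, since the base-$q$ digit data of $k$ must first be translated to base $p$ before Lucas's theorem applies, and repeated exponents $\ell_r$ call for separate care) and then checking that the alternating sum of products of the $D$'s and $L$'s telescopes to precisely the single claimed term. If this becomes unwieldy, the fallback is a generating-function computation: from $\prod_{a\in A_{i+}}(X-a)=e_i(X-\theta^i)$, where $e_i(Y)=\prod_{\deg b<i}(Y-b)$ is an additive polynomial whose derivative $e_i'$ is a constant, one gets $\sum_{k\ge 0}S_i(k)X^{-k-1}=e_i'/e_i(X-\theta^i)$, so that part~(b) amounts to extracting the coefficient of $X^{-k-1}$ from this explicit rational function; together with the Frobenius identity $a^{q^\ell}=a(\theta^{q^\ell})$, which rewrites $S_i(k)=\sum_{a\in A_{i+}}a(\theta^{q^{\ell_1}})\cdots a(\theta^{q^{\ell_s}})/a$, and a partial-fraction expansion over $V_i$, this reduces the $s$-fold product to the same coefficient extraction.
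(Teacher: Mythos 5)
Your part (a) is complete and correct: since $s\leqslant q-1$ the sum $k+1=\sum_r q^{\ell_r}$ has no base-$q$ carries, subtracting $1$ fills positions $0,\dots,\ell_0-1$ with the digit $q-1$, and your count $\sigma_q(k)=\ell_0(q-1)+(s-1)\leqslant (i-1)(q-1)+(q-2)<i(q-1)$ checks out, so Theorem~\ref{T:Sivanish} applies. Deducing (c) from (b) with $s=1$ is also fine. The gap is part (b), which is the substance of the theorem and which you have not actually proved: the multinomial/Lucas computation ends with the phrase that the alternating sum ``should collapse'' to the claimed product, and the step you yourself flag as hard --- identifying exactly which tuples $(n_0,\dots,n_i)$ survive the no-carry condition (delicate when $q=p^f$ with $f>1$ or when some $\ell_r$ coincide) and verifying the telescoping to $\frac{1}{L_i}\prod_r D_{\ell_r}/D_{\ell_r-i}^{q^i}$ --- is precisely where all the content of the classical Carlitz--Lee argument sits. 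As written this is a plan, not a proof, and the generating-function fallback is likewise only gestured at.

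Your fallback does contain the right opening move, namely the Frobenius rewriting $S_i(k)=\sum_{a\in A_{i+}} a(\theta^{q^{\ell_1}})\cdots a(\theta^{q^{\ell_s}})/a$. The paper's route is to stop exactly there and invoke Proposition~\ref{P:AnglesPellarin} (the Angl\`{e}s--Pellarin/Simon identity $\sum_{a\in A_{i+}} a(t_1)\cdots a(t_s)/a=\frac{1}{L_i}\prod_{r=1}^{s}\prod_{\nu=0}^{i-1}\bigl(t_r-\theta^{q^\nu}\bigr)$), specialized at $t_r=\theta^{q^{\ell_r}}$. This yields (a) at once (the factor with $\nu=\ell_r$ vanishes when $\ell_r<i$) and (b) from the identity $[\ell_r-\nu]^{q^\nu}=\theta^{q^{\ell_r}}-\theta^{q^\nu}$, which gives $\prod_{\nu=0}^{i-1}\bigl(\theta^{q^{\ell_r}}-\theta^{q^\nu}\bigr)=D_{\ell_r}/D_{\ell_r-i}^{q^i}$ --- no carry bookkeeping at all. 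If you want a self-contained argument, prove that one polynomial identity in the $t_r$ (a short induction on $i$ using Simon's lemma) rather than fighting the multinomial expansion; what that buys is that a single clean identity replaces the entire combinatorial analysis you left open.
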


The second theorem can be proved especially cleanly by way of results of Angl\`{e}s and Pellarin~\cite{AnglesPellarin14}, who adapted techniques of Simon~\cite[Lem.~4]{AnglesPellarin14}.  In particular, Theorem~\ref{T:CarlitzLee} can be obtained by specializing the following result at $t_1 = \theta^{q^{\ell_1}}, \dots, t_s = \theta^{q^{\ell_s}}$.

\begin{proposition}[{Angl\`{e}s-Pellarin~\cite[Prop.~10]{AnglesPellarin14}}] \label{P:AnglesPellarin}
Let $i \geqslant 0$.  For $0 \leqslant s \leqslant q-1$,
\[
  \sum_{a \in A_{i+}} \frac{a(t_1) \cdots a(t_s)}{a} = \frac{1}{L_i} \prod_{r=1}^s \prod_{\nu=0}^{i-1}
  \bigl( t_r - \theta^{q^{\nu}} \bigr).
\]
\end{proposition}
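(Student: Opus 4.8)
The plan is to prove the identity by polynomial interpolation in the variables $t_1,\dots,t_s$, thereby reducing it to the classical power sum evaluations recalled just above (Theorems~\ref{T:Sivanish} and~\ref{T:CarlitzLee}).

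The first step is a degree bound. Both sides are polynomials in $t_1,\dots,t_s$ with coefficients in $K$ that have degree at most $i$ in each variable separately: on the left, $a(t_r)$ has $t_r$-degree $i$ since $a$ is monic of degree $i$, and dividing by $a = a(\theta)$ does not involve the $t_r$; on the right, each factor $\prod_{\nu=0}^{i-1}(t_r - \theta^{q^\nu})$ has $t_r$-degree $i$ while $1/L_i$ does not involve the $t_r$. Since a polynomial of degree $\leqslant i$ in each of finitely many variables is determined by its values on any product grid with $i+1$ distinct entries per coordinate, and since $\theta$ is transcendental so that $\theta, \theta^q, \dots, \theta^{q^i}$ are distinct, it suffices to prove equality after specializing each $t_r$ to an element of $\{\theta, \theta^q, \dots, \theta^{q^i}\}$.

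So fix $\nu_1, \dots, \nu_s \in \{0, 1, \dots, i\}$ and set $t_r = \theta^{q^{\nu_r}}$. On the left, since each $a \in \FF_q[\theta]$ we have $a(\theta^{q^{\nu_r}}) = a(\theta)^{q^{\nu_r}}$, so the summand becomes $a^{q^{\nu_1} + \cdots + q^{\nu_s} - 1}$ and the left-hand side is exactly $S_i\bigl(q^{\nu_1} + \cdots + q^{\nu_s} - 1\bigr)$. On the right, the factor $\prod_{\nu=0}^{i-1}(\theta^{q^{\nu_r}} - \theta^{q^\nu})$ vanishes whenever $\nu_r \leqslant i-1$ (take $\nu = \nu_r$) and equals $(\theta^{q^i}-\theta)(\theta^{q^i}-\theta^q)\cdots(\theta^{q^i}-\theta^{q^{i-1}}) = D_i$ when $\nu_r = i$; hence the right-hand side is $0$ unless $\nu_1 = \cdots = \nu_s = i$, in which case it equals $D_i^{\,s}/L_i$. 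These two evaluations agree by Theorem~\ref{T:CarlitzLee}: part~(a) (with $\ell_r = \nu_r$) gives $S_i(\sum_r q^{\nu_r} - 1) = 0$ whenever some $\nu_r < i$, and part~(b) gives $S_i(sq^i - 1) = \frac{1}{L_i}\prod_{r=1}^s D_i/D_0^{q^i} = D_i^{\,s}/L_i$ since $D_0 = 1$. Thus the two polynomials agree on the grid, hence everywhere. The remaining case $s = 0$ is the classical identity $S_i(-1) = \sum_{a \in A_{i+}} 1/a = 1/L_i$.

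There is essentially no obstruction in the interpolation packaging itself; the substance lies entirely in the classical power sum results, and the one mildly delicate point is the base-$q$ bookkeeping for $k = \sum_r q^{\nu_r} - 1$ (which here is hidden inside Theorem~\ref{T:CarlitzLee}, but which one can make explicit: using $s \leqslant q-1$ so that no carries occur in $\sum_r q^{\nu_r}$, one finds $\sigma_q(k) = (s-1) + (\min_r \nu_r)(q-1)$, so that $\sigma_q(k) < i(q-1)$ precisely when some $\nu_r < i$, matching Theorem~\ref{T:Sivanish}). One drawback of this route is that it reverses the logical order suggested in the text, deducing Proposition~\ref{P:AnglesPellarin} from Theorem~\ref{T:CarlitzLee} rather than the other way around; a self-contained alternative, presumably closer to Simon's and Angl\`es--Pellarin's argument, is to induct on $i$: writing each $a \in A_{i+}$ uniquely as $a = \theta b + c$ with $b \in A_{(i-1)+}$ and $c \in \FF_q$, the sum factors through an inner sum over $c$, and expanding $\prod_r(t_r b(t_r) + c)$ in powers of $c$ and applying $\sum_{c \in \FF_q} c^m = 0$ for $0 \leqslant m \leqslant q-2$ (this is exactly where $s \leqslant q-1$ enters) together with $\sum_{c \in \FF_q} 1/(x+c) = 1/(x - x^q)$ collapses the inner sum to the single term $\prod_r (t_r b(t_r) - \theta b(\theta))/(\theta b(\theta) - \theta^q b(\theta)^q)$. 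The main difficulty on that route is then algebraic rather than conceptual: one must massage this expression, using the factorizations $L_i = (\theta - \theta^{q^i})L_{i-1}$ and $\prod_{\nu=0}^{i-1}(t-\theta^{q^\nu}) = \bigl(\prod_{\nu=0}^{i-2}(t-\theta^{q^\nu})\bigr)(t - \theta^{q^{i-1}})$ and the relations among the $[j]$, $D_i$, $L_i$, into the form required by the inductive hypothesis.
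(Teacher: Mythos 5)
Your interpolation argument is correct: both sides are polynomials over $K$ of degree at most $i$ in each $t_r$, so it suffices to check equality on the grid $t_r=\theta^{q^{\nu_r}}$, $\nu_r\in\{0,\dots,i\}$; there the left side becomes $S_i(q^{\nu_1}+\cdots+q^{\nu_s}-1)$ and the right side becomes $0$ if some $\nu_r<i$ and $D_i^{\,s}/L_i$ if all $\nu_r=i$, and these match by Theorem~\ref{T:CarlitzLee} (your digit computation $\sigma_q(k)=(s-1)+\nu_{\min}(q-1)$, where $s\leqslant q-1$ guarantees no carries, is also right, and the $s=0$ case is $S_i(-1)=1/L_i$). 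Note, however, that the paper gives no proof of Proposition~\ref{P:AnglesPellarin} at all: it is quoted from Angl\`es--Pellarin, whose argument adapts Simon's trick---in spirit the $a=\theta b+c$ decomposition with $\sum_{c\in\FF_q}c^m=0$ and $\sum_{c\in\FF_q}1/(x+c)=1/(x-x^q)$ that you sketch as an alternative---and the paper explicitly runs the logic the other way, obtaining Theorem~\ref{T:CarlitzLee} by specializing Proposition~\ref{P:AnglesPellarin} at $t_r=\theta^{q^{\ell_r}}$. Your route therefore avoids circularity only because Theorem~\ref{T:CarlitzLee} has independent classical proofs (Carlitz, Lee, Gekeler, as cited), a point you correctly flag. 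The trade-off: your interpolation proof is short and makes transparent that the proposition is exactly the unique degree-$\leqslant i$ polynomial interpolation of the classical power-sum evaluations, but it must import those evaluations; the Simon/Angl\`es--Pellarin induction is self-contained and delivers Theorem~\ref{T:CarlitzLee} as a corollary, which is the dependence the paper actually exploits.
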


In this proposition the coefficient of the top degree term in $t_1, \dots, t_s$ is $S_i(-1) = \sum_{a \in A_{i+}} 1/a = 1/L_i$ (see~\cite[Eq.~(9.09)]{Carlitz35}).  The main result of this section is the following.

\begin{theorem} \label{T:hypersum}
Let $i \geqslant 1$, and let $1 \leqslant s \leqslant q-1$.  Then for any $\mu_1, \dots, \mu_s \geqslant 0$ and any $0 \leqslant j_1, \dots, j_s \leqslant i$,
\begin{align*}
  \sum_{a \in A_{i+}} &\frac{\pd_{\theta}^{j_1}(a)^{q^{\mu_1}} \cdots \pd_{\theta}^{j_s}(a)^{q^{\mu_s}}}{a} \\
  & \hspace*{30pt} = \frac{1}{L_i} \prod_{r=1}^s (-1)^{i-j_r} e_{i,i-j_r} \bigl( \theta - \theta^{q^{\mu_r}}, \theta^q - \theta^{q^{\mu_r}}, \dots, \theta^{q^{i-1}} - \theta^{q^{\mu_r}} \bigr).
\end{align*}
\end{theorem}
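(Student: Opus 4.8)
The plan is to convert the hyperderivative power sum into an ordinary power sum over $A_{i+}$, which is controlled by the Angl\`es--Pellarin formula (Proposition~\ref{P:AnglesPellarin}), and then to finish by a Lagrange interpolation computation involving the coefficients $\kappa_{ij\ell}$. Since every $a \in A_{i+}$ has degree exactly $i$, Proposition~\ref{P:hyperkappa}(b) applies with this value of $i$: for each $r$,
\[
  \pd_{\theta}^{j_r}(a)^{q^{\mu_r}} = \sum_{\ell_r=0}^{i} \kappa_{i j_r \ell_r}\bigl( t - \theta^{q^{\mu_r}}, t^q - \theta^{q^{\mu_r}}, \dots, t^{q^i} - \theta^{q^{\mu_r}} \bigr)\, a(t)^{q^{\ell_r}},
\]
where the right-hand side is independent of the auxiliary variable $t$. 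Substituting this into each of the $s$ factors, multiplying out, and interchanging the finite sums over $a$ and over $(\ell_1, \dots, \ell_s)$, the left-hand side of the theorem becomes
\[
  \sum_{\ell_1=0}^i \cdots \sum_{\ell_s=0}^i \Bigl( \prod_{r=1}^s \kappa_{i j_r \ell_r}\bigl( t - \theta^{q^{\mu_r}}, \dots, t^{q^i} - \theta^{q^{\mu_r}} \bigr) \Bigr) \sum_{a \in A_{i+}} \frac{a(t)^{q^{\ell_1}} \cdots a(t)^{q^{\ell_s}}}{a}.
\]
As $a$ has coefficients in $\FF_q$, we have $a(t)^{q^{\ell}} = a(t^{q^{\ell}})$, so the inner sum is $\sum_{a} a(t^{q^{\ell_1}}) \cdots a(t^{q^{\ell_s}})/a$, which Proposition~\ref{P:AnglesPellarin}, specialized at $t_r = t^{q^{\ell_r}}$ (legitimate, as that proposition is an identity of polynomials), evaluates to $\frac{1}{L_i} \prod_{r=1}^s \prod_{\nu=0}^{i-1}(t^{q^{\ell_r}} - \theta^{q^{\nu}})$.

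After pulling out $1/L_i$, the remaining $s$-fold sum factors over $r$ (its summand is a product of terms each depending on a single $\ell_r$), so the theorem reduces to the single-variable identity: for $0 \leqslant j \leqslant i$ and $\mu \geqslant 0$,
\[
  \sum_{\ell=0}^i \kappa_{i j \ell}\bigl( t - \theta^{q^{\mu}}, \dots, t^{q^i} - \theta^{q^{\mu}} \bigr) \prod_{\nu=0}^{i-1}\bigl( t^{q^{\ell}} - \theta^{q^{\nu}} \bigr) = (-1)^{i-j} e_{i,i-j}\bigl( \theta - \theta^{q^{\mu}}, \dots, \theta^{q^{i-1}} - \theta^{q^{\mu}} \bigr).
\]
In keeping with the $t$-independence noted above, the right-hand side no longer involves $t$. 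To establish this, set $x_m = t^{q^m}$ for $0 \leqslant m \leqslant i$ (distinct, since $t$ is transcendental) and $c = \theta^{q^{\mu}}$. Using the formula for $\kappa_{ij\ell}$ in Remark~\ref{R:kappasubbed} and the generating function $\sum_{k} e_{ik}\bigl( \{ x_m - c \}_{m \neq \ell} \bigr) u^k = \prod_{m \neq \ell}(1 + (x_m - c)u)$, the left-hand side equals $(-1)^{i-j}$ times the coefficient of $u^{i-j}$ in
\[
  E(u) := \sum_{\ell=0}^i \frac{\prod_{\nu=0}^{i-1}(x_\ell - \theta^{q^{\nu}})}{\prod_{m \neq \ell}(x_\ell - x_m)} \prod_{m \neq \ell}\bigl( 1 + (x_m - c) u \bigr).
\]
Writing $\prod_{m \neq \ell}(1 + (x_m - c)u) = \bigl( \prod_{m=0}^i (1 + (x_m - c)u) \bigr) \big/ (1 + (x_\ell - c)u)$, substituting $y = c - u^{-1}$ so that $1 + (x_m - c)u = -u(y - x_m)$, and invoking the partial-fraction identity
\[
  \sum_{\ell=0}^i \frac{g(x_\ell)}{(y - x_\ell)\prod_{m \neq \ell}(x_\ell - x_m)} = \frac{g(y)}{\prod_{m=0}^i(y - x_m)} \qquad (\deg g \leqslant i)
\]
with $g(x) = \prod_{\nu=0}^{i-1}(x - \theta^{q^{\nu}})$, the factors $\prod_{m=0}^i(y - x_m)$ cancel, and tracking the powers of $u$ gives $E(u) = \prod_{\nu=0}^{i-1}\bigl( 1 + (\theta^{q^{\nu}} - \theta^{q^{\mu}})u \bigr)$, whose coefficient of $u^{i-j}$ is $e_{i,i-j}(\theta - \theta^{q^{\mu}}, \dots, \theta^{q^{i-1}} - \theta^{q^{\mu}})$, as required.

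The substitution, the interchange of sums, and the reduction to Proposition~\ref{P:AnglesPellarin} are routine; the only step that needs genuine care is the collapse of $E(u)$ above, where one must keep track of the sign $(-1)^{i-j}$ and the compensating powers $u^{i}$ and $u^{-i}$ produced by the substitution $y = c - u^{-1}$ (the manipulation takes place in $\KK(t)(u)$, though $E(u)$ is in fact a polynomial in $u$ of degree at most $i$). It is cleanest to isolate this single-variable identity as a standalone lemma, stated for indeterminates $x_0, \dots, x_i$, proved once by the generating-function and partial-fraction argument above, and then specialized at $x_m = t^{q^m}$.
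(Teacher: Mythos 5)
Your proposal is correct, and its opening move coincides with the paper's: both expand each $\pd_{\theta}^{j_r}(a)^{q^{\mu_r}}$ via Proposition~\ref{P:hyperkappa}(b) and interchange the sum over $a$ with the sums over $\ell_1,\dots,\ell_s$. Where you diverge is in how the resulting $(i+1)^s$ terms are handled. The paper immediately specializes $t=\theta$, so the inner sum becomes the ordinary power sum $S_i(q^{\ell_1}+\cdots+q^{\ell_s}-1)$; Theorem~\ref{T:CarlitzLee}(a) then kills every term except $\ell_1=\cdots=\ell_s=i$, and that single surviving term is evaluated directly from Remark~\ref{R:kappasubbed}, whose denominator at $t=\theta$ is exactly $D_i$, cancelling the $D_i^s$ from $S_i(sq^i-1)=D_i^s/L_i$. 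You instead keep $t$ generic, recognize the inner sum as the specialization $t_r=t^{q^{\ell_r}}$ of Proposition~\ref{P:AnglesPellarin}, factor the $s$-fold sum, and collapse each single-variable sum over $\ell$ by the Lagrange partial-fraction identity after the substitution $y=c-u^{-1}$; I checked this computation and the bookkeeping of signs and powers of $u$ is right (note $g=\prod_{\nu=0}^{i-1}(x-\theta^{q^{\nu}})$ has degree $i$, which is exactly the bound the interpolation identity tolerates at $i+1$ nodes). The trade-off: the paper's route is shorter because the vanishing statement does all the work of discarding terms, but it leans on Theorem~\ref{T:CarlitzLee} as a black box; your route needs only the Angl\`es--Pellarin polynomial identity (from which the paper itself derives Theorem~\ref{T:CarlitzLee}) plus a self-contained symmetric-function lemma, and as a bonus it verifies explicitly, rather than by fiat, that the $t$-dependent expression produced by Proposition~\ref{P:hyperkappa} is independent of $t$ --- the phenomenon flagged in \S\ref{S:Vandermonde}. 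Your suggestion to isolate the collapse of $E(u)$ as a standalone lemma in indeterminates $x_0,\dots,x_i$ is sensible and would make the argument fully rigorous as written.
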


\begin{proof}
By Proposition~\ref{P:hyperkappa}, we see that
\[
  \sum_{a \in A_{i+}} \frac{\pd_{\theta}^{j_1}(a)^{q^{\mu_1}} \cdots \pd_{\theta}^{j_s}(a)^{q^{\mu_s}}}{a}
  = \sum_{a \in A_{i+}} \frac{1}{a}\cdot \prod_{r=1}^{s} \sum_{\ell_r = 0}^i \kappa_{i j_r \ell_r} \bigl( t - \theta^{q^{\mu_r}}, \dots, t^{q^i} - \theta^{q^{\mu_r}} \bigr)a(t)^{q^{\ell_r}}.
\]
If we substitute $t = \theta$, then we have
\begin{align*}
  \sum_{a \in A_{i+}} \frac{\pd_{\theta}^{j_1}(a)^{q^{\mu_1}} \cdots \pd_{\theta}^{j_s}(a)^{q^{\mu_s}}}{a}
  &= \sum_{a \in A_{i+}} \frac{1}{a}\cdot \prod_{r=1}^{s} \sum_{\ell_r = 0}^i \kappa_{ij_r \ell_r} \bigl( \theta - \theta^{q^{\mu_r}}, \dots, \theta^{q^i} - \theta^{q^{\mu_r}} \bigr)a^{q^{\ell_r}} \\
  &= \begin{aligned}[t]
  \sum_{\ell_1 = 0}^i \cdots \sum_{\ell_s=0}^i \Biggl( \prod_{r=1}^s \kappa_{ij_r \ell_r} &\bigl( \theta - \theta^{q^{\mu_r}}, \dots, \theta^{q^i} - \theta^{q^{\mu_r}} \bigr) \Biggr) \\
   &{}\times \sum_{a \in A_{i+}} a^{q^{\ell_1} + \dots + q^{\ell_s}-1}.
  \end{aligned}
\end{align*}
Thus for fixed $\ell_1, \dots, \ell_s$, each term is multiplied by $S_i(q^{\ell_1} + \dots + q^{\ell_s} - 1)$.  By Theorem~\ref{T:CarlitzLee}, this sum is $0$, unless $\ell_1 = \cdots = \ell_s = i$, in which case
\[
  S_i(q^{\ell_1} + \dots + q^{\ell_s} - 1) = S_i(sq^i - 1) = \frac{D_i^s}{L_i}.
\]
Therefore,
\begin{equation}  \label{E:hypersuminter}
  \sum_{a \in A_{i+}} \frac{\pd_{\theta}^{j_1}(a)^{q^{\mu_1}} \cdots \pd_{\theta}^{j_s}(a)^{q^{\mu_s}}}{a}
  = \frac{D_i^s}{L_i} \prod_{r=1}^s \kappa_{ij_r i} \bigl( \theta - \theta^{q^{\mu_r}}, \dots, \theta^{q^i} - \theta^{q^{\mu_r}} \bigr).
\end{equation}
Now for $0 \leqslant j \leqslant i$ and $\mu \geqslant 0$, we see from Remark~\ref{R:kappasubbed} that
\[
  \kappa_{iji} \bigl( \theta - \theta^{q^\mu}, \dots, \theta^{q^i} - \theta^{q^{\mu}} \bigr)
  = \frac{(-1)^{i-j} e_{i,i-j} \bigl( \theta - \theta^{q^{\mu}}, \dots, \theta^{q^{i-1}} - \theta^{q^{\mu}} \bigr)}
  {D_i},
\]
and the result follows by substituting into~\eqref{E:hypersuminter}.
\end{proof}

\begin{remark} \label{R:othertechniques}
As pointed out by the referee, Theorem~\ref{T:hypersum} can be obtained from Proposition~\ref{P:AnglesPellarin} by appropriately taking hyperderivatives with respect to the variables $t_1, \dots, t_s$ and specializing.  Furthermore, Pellarin and Perkins~\cite{PellarinPerkins20} have more general formulas for the sums in Proposition~\ref{P:AnglesPellarin}, which can be turned into formulas for the hyperderivative sums in Theorem~\ref{T:hypersum} that are valid for all $s \geqslant 1$.  We explore these ideas in~\S\ref{S:Anderson}.
\end{remark}

\begin{example}
We will see applications of Theorem~\ref{T:hypersum} to log-algebraicity calculations in \S\ref{S:Thakur}, but for the moment there are some interesting cases to observe.  For $s \leqslant q-1$, if we take $j_1 = \dots= j_s = j$, with $0 \leqslant j \leqslant i$, and $\mu_1 = \cdots = \mu_s = 0$, then we have
\begin{align*}
  \sum_{a \in A_{i+}} \frac{\pd_{\theta}^j(a)^s}{a} &= \frac{1}{L_i} \cdot (-1)^{s(i-j)} e_{i,i-j} \bigl( 0,\theta^q-\theta,\dots,
  \theta^{q^{i-1}}-\theta \bigr)^s \\
  &= \frac{(-1)^{s(i-j)}}{L_i} \cdot e_{i-1,i-j}([1],[2],\dots, [i-1])^s.
\end{align*}
\end{example}

\begin{example} \label{Ex:two}
Similarly, if we take $s=1$, $0 \leqslant j \leqslant i$, and $\mu\geqslant 0$, then
\[
  \sum_{a \in A_{i+}} \frac{\pd_{\theta}^j(a)^{q^{\mu}}}{a} = \frac{(-1)^{i-j}}{L_i} \cdot  e_{i,i-j} \bigl(
  \theta - \theta^{q^{\mu}}, \theta^q - \theta^{q^{\mu}}, \dots, \theta^{q^{i-1}} - \theta^{q^{\mu}} \bigr).
\]
\begin{enumerate}
\item[(a)] If $\mu \geqslant i$, then we have
\[
  \sum_{a \in A_{i+}} \frac{\pd_{\theta}^j(a)^{q^{\mu}}}{a} = \frac{(-1)^{i-j}}{L_i} \cdot e_{i,i-j} \bigl(
  -[\mu], -[\mu-1]^q, \dots, -[\mu-i+1]^{q^{i-1}} \bigr).
\]
\item[(b)] If $\mu < i$, then the simplification, using~\eqref{E:eijspec0}, is slightly different with
\begin{align*}
 \sum_{a \in A_{i+}} &\frac{\pd_{\theta}^j(a)^{q^{\mu}}}{a} \\
  &= \frac{(-1)^{i-j}}{L_i} \cdot e_{i,i-j} \bigl( -[\mu], -[\mu-1]^q, \dots, -[1]^{q^{\mu-1}},0,[1]^{q^{\mu}}, [2]^{q^{\mu}}, \dots, [i-\mu-1]^{q^{\mu}} \bigr) \notag \\
  &= \frac{(-1)^{i-j}}{L_i} \cdot e_{i-1,i-j} \bigl( -[\mu], -[\mu-1]^q, \dots, -[1]^{q^{\mu-1}},[1]^{q^{\mu}}, [2]^{q^{\mu}}, \dots, [i-\mu-1]^{q^{\mu}} \bigr). \notag
\end{align*}
\end{enumerate}
These cases will arise in the next section on log-algebraicity formulas.
\end{example}

\begin{remark}
More general hyperderivative power sum formulas can be used to analyze log-algebraicity formulas on tensor powers of the Carlitz module.  For more details on such formulas, see~\cite[Ch.~5, 9]{PLogAlg}.
\end{remark}

\section{Thakur's method for log-algebraicity on the Carlitz module} \label{S:Thakur}

In~\cite{And96}, Anderson developed the notion of log-algebraic power series identities for rank~$1$ Drinfeld modules based on previous special cases of Thakur~\cite{Thakur92}.  For the Carlitz module, Anderson's main theorem was the following.  We take $x$, $z$, and $\theta$ to be independent variables over $\FF_q$.

\begin{theorem}[{Anderson~\cite[Thm.~3, Prop.~8]{And96}}] \label{T:Anderson}
For $\beta(x) \in A[x]$, let
\begin{equation}
  \cP(\beta,z) \assign \exp_C \left( \sum_{a \in A_+} \frac{\beta(C_a(x))}{a} z^{q^{\deg a}} \right) \in \power{K[x]}{z}.
\end{equation}
Then in fact $\cP(\beta,z) \in A[x,z]$.
\end{theorem}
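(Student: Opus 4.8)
The strategy is to establish the sharper statement of Theorem~\ref{T:Thakur} — explicit closed forms for the special polynomials, manifestly with coefficients in $A$ — and to obtain Theorem~\ref{T:Anderson} from it after two soft reductions. Theorem~\ref{T:Thakur} in turn will be proved by unwinding $\exp_C$, reducing the resulting inner sums over monic polynomials to classical power sums, and evaluating those by the vanishing theorems (Theorem~\ref{T:Sivanish}, Theorem~\ref{T:CarlitzLee}) recast through hyperderivatives via Proposition~\ref{P:newbrac} and Theorem~\ref{T:hypersum}.

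First I would reduce to the case $\beta(x) = x^m$. Since $\exp_C$ and $\beta \mapsto \beta(C_a(x))$ are both $\FF_q$-linear, so is $\beta \mapsto \cP(\beta,z)$; and the functional equation $\exp_C(a_0 w) = C_{a_0}(\exp_C w)$ gives $\cP(a_0\beta,z) = C_{a_0}(\cP(\beta,z))$, where $C_{a_0} \in A[\tau]$ carries $A[x,z]$ into itself. Hence $\{\beta \in A[x] : \cP(\beta,z) \in A[x,z]\}$ is an $A$-submodule of $A[x]$, and it suffices to prove $P_m(x,z) := \cP(x^m,z) \in A[x,z]$ for every $m \geqslant 0$. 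Next, since $q^i$-th powers are additive in characteristic $p$, expanding $\exp_C(w) = \sum_i w^{q^i}/D_i$ collapses the double series to
\[
  P_m(x,z) = \sum_{n \geqslant 0} z^{q^n} \sum_{d=0}^{n} \frac{1}{D_{n-d}} \Biggl( \sum_{a \in A_{d+}} \frac{C_a(x)^m}{a} \Biggr)^{q^{n-d}},
\]
so the exponents of $z$ are all powers of $q$ and the whole problem reduces to controlling the inner sums over $A_{d+}$.

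Now I would expand $C_a(x)^m = \bigl( \sum_k \brac{a}{k} x^{q^k} \bigr)^m$ multinomially, producing an $\FF_q$-combination of terms $\bigl( \prod_k \brac{a}{k}^{e_k} \bigr) x^{\sum_k e_k q^k}$ with $\sum_k e_k = m$, and then replace each $\brac{a}{k}$ by Carlitz's formula~\eqref{E:bracCarlitz} (or, to keep everything visibly integral and to plug straight into the statements at hand, by Proposition~\ref{P:newbrac}). This rewrites $\sum_{a \in A_{d+}} C_a(x)^m/a$ as a $K$-linear combination of classical power sums $S_d(q^{a_1} + \cdots + q^{a_m} - 1)$ — equivalently, of the hyperderivative power sums of Theorem~\ref{T:hypersum}. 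Applying Theorem~\ref{T:Sivanish} ($S_d(k) = 0$ whenever $k < q^d - 1$) annihilates all but finitely many contributions, which forces $P_m$ to be a polynomial in $z$; and on the surviving terms the top bracket $\brac{a}{d}$ is the unit leading coefficient of $a$, which drops $\sum_k e_k$ by one and so sets up an induction on $m$. Feeding in the exact evaluations of Theorem~\ref{T:CarlitzLee} (valid for $1 \leqslant s \leqslant q-1$), equivalently the symmetric-polynomial forms from Theorem~\ref{T:hypersum} together with the identities of \S\ref{S:Prelim} (especially Proposition~\ref{P:symmrec2}), shows the surviving terms lie in $A[x]$, the denominators $D_{n-d}$ cancel, and the whole collapses to the closed forms of Theorem~\ref{T:Thakur} — whose right-hand sides are patently in $A[x,z]$, being a Carlitz polynomial $C_b$ with $b \in A$ applied to a polynomial in $x$ and $z$. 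This yields Theorem~\ref{T:Anderson} for every $\beta$ in the $A$-span of $\{x^m : \sigma_q(m) \leqslant q-1\}$.

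The main obstacle is the combinatorial bookkeeping in that last step: organizing the multinomial expansion, tracking exactly which power sums survive the vanishing, and identifying the surviving sum with the symmetric-polynomial expression — this is precisely where Proposition~\ref{P:newbrac}, Theorem~\ref{T:hypersum}, and Proposition~\ref{P:symmrec2} do the work, and it is the content of \S\ref{S:Thakur}. A secondary point is that this route only reaches $m$ of digit sum at most $q-1$; for arbitrary $m$ the power sums $S_d(k)$ with $\sigma_q(k) \geqslant d(q-1)$ no longer vanish and must be evaluated in full generality, which lies outside the clean statements recalled here and is in effect the substance of Anderson's original proof, so for the remaining $\beta$ one either invokes that directly or develops the additional power-sum identities separately.
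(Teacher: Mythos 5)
This statement is not proved in the paper at all: Theorem~\ref{T:Anderson} is imported from Anderson~\cite{And96} as background, and the paper's own work in \S\ref{S:Thakur} is the explicit evaluation of $P_m(x,z)$ for the restricted class $\sigma_q(m)\leqslant q-1$ (Theorem~\ref{T:Thakur}). Your plan therefore has a genuine gap exactly where you flag one, and the gap is not a removable technicality. The reduction to monomials is fine ($\cP$ is $A$-linear in $\beta$ via $\exp_C(a_0w)=C_{a_0}(\exp_C(w))$), but the monomials $x^m$ with $\sigma_q(m)\leqslant q-1$ generate only a proper $A$-submodule of $A[x]$ (distinct monomials are $A$-independent), so proving Theorem~\ref{T:Thakur} yields Anderson's theorem only on that submodule. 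For $\sigma_q(m)\geqslant q$ the inner sums involve $S_d(q^{\ell_1}+\cdots+q^{\ell_s}-1)$ with $s\geqslant q$, where Theorems~\ref{T:Sivanish} and~\ref{T:CarlitzLee} no longer apply cleanly; and the paper's remark following Theorem~\ref{T:Thakur} records that Thakur's extension of precisely this power-sum method to general $m$ obtains only $P_m\in K[x,z]$, the integrality being exactly what the method has not been made to deliver. So ``develop the additional power-sum identities separately'' is not a patch known to work, and falling back on Anderson's original indirect proof means your argument re-derives a special case of a theorem it must cite anyway.

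Within the covered range your outline is essentially the paper's proof of Theorem~\ref{T:Thakur}: expand $\brac{a}{k}$ via Proposition~\ref{P:newbrac}, evaluate the resulting hyperderivative power sums by Theorem~\ref{T:hypersum}, and contract with Proposition~\ref{P:symmrec2} and Lemma~\ref{L:ehdiff}. One step is misattributed, however. The vanishing theorems do not ``force $P_m$ to be a polynomial in $z$'': for every $i$ the coefficient $\lambda_i(m)=\sum_{a\in A_{i+}}C_a(x)^m/a$ is nonzero (for $m=q^\mu$ it equals $\sum_{d}(-1)^{\mu-d}e_{\mu,\mu-d}(\theta,\dots,\theta^{q^{\mu-1}})\,C_{\theta^d}(x)^{q^i}/L_i$ for all $i$), so infinitely many terms survive inside $\exp_C$. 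Their role is only to collapse each $\lambda_i(m)$ to the single term with $\ell_1=\cdots=\ell_s=i$. Polynomiality in $z$ arises at the very last step, from recognizing $\sum_i\lambda_i(m)z^{q^i}$ as a finite $A$-linear combination of $\log_C$ of polynomials in $x$ and $z$ and using $\exp_C(b\log_C(w))=C_b(w)$; your coefficientwise cancellation of the $D_{n-d}$ is a shadow of that identity but is not supplied by the vanishing results.
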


By using the Carlitz $A$-module operation, it is evident that $\cP(\beta,z)$ is $A$-linear in $\beta$, and so the values of $\cP(\beta,z)$ are completely determined by Anderson's \emph{special polynomials},
\[
  P_m(x,z) \assign \cP(x^m,z), \quad m \geqslant 0.
\]
Anderson~\cite[Prop.~8]{And96} derives several properties of special polynomials, including bounds for their degrees in $x$, $z$, and $\theta$.  However, Anderson's proof was indirect, and so except in certain cases he does not provide exact formulas for $P_m(x,z)$.  In~\cite[\S 8.10]{Thakur}, Thakur constructed a new method for proving Anderson's theorem using the power sum formulas for $S_i(k)$ (Theorem~\ref{T:CarlitzLee} and others) and Carlitz's formulas for $\brac{a}{k}$ in~\eqref{E:EbracCarlitz}.  One benefit of Thakur's method was that he derived exact formulas for $P_m(x,z)$ for a large class of values of $m$.  In this section we restate Thakur's results using symmetric polynomials and provide a proof using the techniques of sections~\S\ref{S:Brac}--\ref{S:Sums}.

\begin{theorem}[{Thakur~\cite[\S 8.10]{Thakur}}] \label{T:Thakur} \
\begin{enumerate}
\item[(a)] Let $m = q^{\mu}$ for $\mu \geqslant 0$.  Then
\[
  P_{q^\mu}(x,z) = \sum_{d=0}^{\mu} (-1)^{\mu-d} \cdot C_{e_{\mu,\mu-d}(\theta, \dots, \theta^{q^{\mu-1}})} \bigl(
  C_{\theta^d}(x)\cdot z \bigr).
\]
\item[(b)] Suppose $m$ is of the form $m = q^{\mu_1} + \cdots + q^{\mu_s}$ for $\mu_j \geqslant 0$ and $1 \leqslant s \leqslant q-1$.  Then
\[
  P_m(x,z) = \sum_{d_1=0}^{\mu_1} \cdots \sum_{d_s=0}^{\mu_s} (-1)^{\sum \mu_j - \sum d_j} \cdot
  C_{\prod_{r=1}^s e_{\mu_r,\mu_r-d_r} (\theta, \dots, \theta^{q^{\mu_r-1}})}
  \Biggl( \Biggl( \prod_{r=1}^s C_{\theta^{d_r}}(x) \Biggr)\cdot z \Biggr).
\]
\end{enumerate}
\end{theorem}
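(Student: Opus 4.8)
Since part~(a) is the special case $s=1$ (with $\mu_1=\mu$) of part~(b), it suffices to prove~(b). The first step is to remove the Carlitz exponential from the definition of $P_m=\cP(x^m,\cdot)$. Applying $\log_C$ to the asserted equality $P_m(x,z)=\cP(x^m,z)$ and using that $\exp_C$ and $\log_C$ are mutually inverse, $\FF_q$-linear and additive on $z\power{K[x]}{z}$, that $\log_C(C_\gamma(w))=\gamma\log_C(w)$ for $\gamma\in A$, and that $\log_C(w)=\sum_{n\geqslant 0}w^{q^n}/L_n$, the identity of part~(b) is equivalent, upon extracting the coefficient of $z^{q^i}$ from each side, to the family of identities
\[
  \sum_{a\in A_{i+}}\frac{C_a(x)^m}{a}
  = \frac{1}{L_i}\sum_{d_1=0}^{\mu_1}\cdots\sum_{d_s=0}^{\mu_s}(-1)^{\sum_r(\mu_r-d_r)}
  \left(\prod_{r=1}^s e_{\mu_r,\mu_r-d_r}\bigl(\theta,\dots,\theta^{q^{\mu_r-1}}\bigr)\right)
  \left(\prod_{r=1}^s C_{\theta^{d_r}}(x)\right)^{q^i}, \qquad i\geqslant 0.
\]

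To evaluate the left-hand side I would write $C_a(x)^m=\prod_{r=1}^s C_a(x)^{q^{\mu_r}}$, expand each factor as $\sum_{k_r=0}^i\brac{a}{k_r}^{q^{\mu_r}}x^{q^{k_r+\mu_r}}$, substitute Proposition~\ref{P:newbrac} for each $\brac{a}{k_r}$, and interchange $\sum_{a\in A_{i+}}$ with the resulting finite sums. Since $s\leqslant q-1$, Theorem~\ref{T:hypersum} evaluates the arising mixed hyperderivative power sum $\sum_{a\in A_{i+}}\prod_r\pd_\theta^{j_r}(a)^{q^{\mu_r}}/a$ as a product over~$r$ (for $i\geqslant 1$; the case $i=0$ is handled directly), and collecting terms exhibits the left-hand side as $\frac{1}{L_i}\prod_{r=1}^s\bigl(\sum_{k\geqslant 0}\Phi_{\mu_r,i}(k)\,x^{q^{k+\mu_r}}\bigr)$ with
\[
  \Phi_{\mu,i}(k) := \sum_{j=k}^i(-1)^{i-j}\,h_{k,j-k}\bigl([1],\dots,[k]\bigr)^{q^\mu}\,
  e_{i,i-j}\bigl(\theta-\theta^{q^\mu},\theta^q-\theta^{q^\mu},\dots,\theta^{q^{i-1}}-\theta^{q^\mu}\bigr).
\]
Similarly, expanding $\bigl(\prod_r C_{\theta^{d_r}}(x)\bigr)^{q^i}=\prod_r\sum_{l_r=0}^{d_r}\brac{\theta^{d_r}}{l_r}^{q^i}x^{q^{l_r+i}}$, substituting Proposition~\ref{P:bracthetam} into each coefficient, and re-summing over the $d_r$ first rewrites the right-hand side as $\frac{1}{L_i}\prod_{r=1}^s\bigl(\sum_{l\geqslant 0}\Psi_{\mu_r,i}(l)\,x^{q^{l+i}}\bigr)$ with
\[
  \Psi_{\mu,i}(l) := \sum_{d=l}^\mu(-1)^{\mu-d}\,e_{\mu,\mu-d}\bigl(\theta,\dots,\theta^{q^{\mu-1}}\bigr)\,
  h_{l+1,d-l}\bigl(\theta,\dots,\theta^{q^l}\bigr)^{q^i}.
\]
As both sides are now products over~$r$ of single-variable polynomials in~$x$, it suffices to prove, for each $\mu\geqslant 0$, that $\sum_k\Phi_{\mu,i}(k)x^{q^{k+\mu}}=\sum_l\Psi_{\mu,i}(l)x^{q^{l+i}}$; comparing coefficients of $x^{q^N}$ reduces this to the single identity $\Phi_{\mu,i}(k)=\Psi_{\mu,i}(k+\mu-i)$ for $0\leqslant k\leqslant i$ with $k+\mu\geqslant i$, together with the boundary vanishing $\Phi_{\mu,i}(k)=0$ whenever $k+\mu<i$ and $\Psi_{\mu,i}(l)=0$ whenever $l+i<\mu$.

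The two vanishing statements are quick. One has $\Phi_{\mu,i}(k)=L_i\sum_{a\in A_{i+}}\brac{a}{k}^{q^\mu}/a$, and inserting Carlitz's formula~\eqref{E:bracCarlitz} turns this into $\sum_{j=0}^k S_i(q^{j+\mu}-1)/(D_j^{q^\mu}L_{k-j}^{q^{j+\mu}})$, every term of which vanishes for $k+\mu<i$ by Theorem~\ref{T:Sivanish}. For $\Psi$, converting the defining sum to a generating function in an auxiliary variable~$T$ via~\eqref{E:eijdef}--\eqref{E:hijdef} shows that $\Psi_{\mu,i}(l)$ is the coefficient of $T^{\mu-l}$ in $\prod_{\nu=0}^{\mu-1}(1-\theta^{q^\nu}T)\big/\prod_{\nu=i}^{i+l}(1-\theta^{q^\nu}T)$; when $l+i<\mu$ the denominator divides the numerator, so this is a polynomial in~$T$ of degree $\mu-l-1<\mu-l$ and the coefficient is~$0$. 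The same bookkeeping recasts the remaining identity $\Phi_{\mu,i}(k)=\Psi_{\mu,i}(k+\mu-i)$, after the substitution $l=k+\mu-i$ (so that $\mu-l=i-k$), as the assertion
\[
  [T^{i-k}]\,\frac{\prod_{\nu=0}^{i-1}\bigl(1-(\theta^{q^\nu}-\theta^{q^\mu})T\bigr)}{\prod_{\nu=\mu+1}^{\mu+k}\bigl(1-(\theta^{q^\nu}-\theta^{q^\mu})T\bigr)}
  = [T^{i-k}]\,\frac{\prod_{\nu=0}^{\mu-1}(1-\theta^{q^\nu}T)}{\prod_{\nu=i}^{k+\mu}(1-\theta^{q^\nu}T)}.
\]

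I expect this last equality of Taylor coefficients to be the main obstacle, and it is here that the ``several identities for symmetric polynomials'' enter: the plan is to match the partial-fraction (equivalently, residue) expansions of the two rational functions --- whose poles sit at $T=1/(\theta^{q^\nu}-\theta^{q^\mu})$ and at $T=1/\theta^{q^\nu}$ respectively --- using the $LDU$-type relations $\cE_d\cH_d=I_d=\cH_d\cE_d$ of Remark~\ref{R:EdHd} and the recursions~\eqref{E:eijrec1}--\eqref{E:hijspec0} to translate between the variable sets $\{\theta^{q^\nu}-\theta^{q^\mu}\}$ and $\{\theta^{q^\nu}\}$. (The same circle of identities disposes of the $i=0$ case mentioned above, where the relevant statement is $\sum_d(-1)^{\mu-d}e_{\mu,\mu-d}(\theta,\dots,\theta^{q^{\mu-1}})\,C_{\theta^d}(x)=x^{q^\mu}$, a direct consequence of Proposition~\ref{P:symmrec2}.) Once the coefficient identity is established, multiplying the $s$ single-variable identities and retracing the $\log_C$ reduction yields parts~(a) and~(b).
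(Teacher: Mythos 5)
Your reduction is sound and, up to the final step, runs parallel to the paper's argument: you expand $C_a(x)^m$ via Proposition~\ref{P:newbrac}, invoke Theorem~\ref{T:hypersum} to factor the resulting sum over $r$, and thereby reduce part (b) to $s$ independent single-index identities --- exactly the structure the paper exploits by proving (a) first and observing that (b) factors into $s$ copies of that computation. Your definitions of $\Phi_{\mu,i}$ and $\Psi_{\mu,i}$, the reduction to $\Phi_{\mu,i}(k)=\Psi_{\mu,i}(k+\mu-i)$ plus the two boundary vanishings, and both vanishing arguments (Carlitz's formula~\eqref{E:bracCarlitz} with Theorem~\ref{T:Sivanish} for $\Phi$; the generating-function degree count for $\Psi$) all check out, as does the $i=0$ base case via Proposition~\ref{P:symmrec2}. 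The recasting of the remaining identity as an equality of Taylor coefficients of two rational functions is also correct, and is arguably a cleaner formulation than the paper's, which instead eliminates spurious variables via the polynomials $G_{i,k,\ell}$ of Lemma~\ref{L:Gikl} and then shifts variables with Lemma~\ref{L:ehdiff}.

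The genuine gap is that this coefficient identity --- the computational heart of the theorem, where the paper expends Lemma~\ref{L:Gikl}, Lemma~\ref{L:ehdiff}, and the chain~\eqref{E:lambdainter}--\eqref{E:lambdafinal} --- is left unproved. ``Match the partial-fraction expansions using the $LDU$ relations'' is a hope, not an argument: the two rational functions have poles at entirely different points, the left one is not even proper when $k<i$, and nothing in Remark~\ref{R:EdHd} translates between the variable sets $\{\theta^{q^\nu}-\theta^{q^\mu}\}$ and $\{\theta^{q^\nu}\}$ by itself. The identity is true, and the clean way to close it from where you stand is the M\"obius substitution $S=T/(1+\theta^{q^\mu}T)$: since $1-(\theta^{q^\nu}-\theta^{q^\mu})T=(1-\theta^{q^\nu}S)/(1-\theta^{q^\mu}S)$, the left-hand rational function becomes $(1+\theta^{q^\mu}T)^{i-k-1}\,G\bigl(T/(1+\theta^{q^\mu}T)\bigr)$ where $G$ is the right-hand one (the exponent $i-k-1$ comes from cancelling $\prod_{\nu=0}^{k+\mu}(1-\theta^{q^\nu}S)$ against both numerator and denominator, using $k+\mu\geqslant i$), and then writing $G(S)=\sum_m g_m S^m$ gives $[T^{i-k}]\sum_m g_m T^m(1+\theta^{q^\mu}T)^{i-k-1-m}=\sum_m g_m\binom{i-k-1-m}{i-k-m}\theta^{q^\mu(i-k-m)}=g_{i-k}$, since every binomial coefficient with $m<i-k$ vanishes as an integer. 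Until some such argument is supplied, the proof is incomplete at precisely the step that distinguishes this theorem from a formal rearrangement.
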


\begin{remark}
(a) Also in~\cite[\S 8.10]{Thakur}, Thakur gives a proof that $P_m(x,z) \in K[x,z]$ for general $m$ using the same methods, though the intricate calculations present difficulties to conclude that the coefficients are in $A$.  Following suggestions of the referee, we prove Theorem~\ref{T:Anderson} and thus that $P_m(x,z) \in A[x,z]$ for all~$m$, though at the expense of the explicitness of Theorem~\ref{T:Thakur}.  (b) Multivariable versions of Theorem~\ref{T:Anderson} and parts of Theorem~\ref{T:Thakur} were worked out by Angl\`{e}s, Pellarin, and Tavares Ribeiro~\cite{AnglesPellarinTavares18} based on earlier work of Pellarin~\cite{Pellarin12}, and it would be interesting to investigate their constructions using the techniques in this section.  (c)~Further results on connections between polynomial power sums and log-algebraicity results can be found in~\cite{AnglesTavares17}, \cite{CEP18}, \cite{DemeslayPhD}, \cite{GreenP18}, \cite{Perkins14a}, \cite{Perkins14c}.  (d)~While Thakur's methods do not readily transfer to the setting of tensor powers of the Carlitz module, generalizations of our proof here of Theorem~\ref{T:Thakur} to higher tensor powers will be the subject of~\cite[Ch.~9]{PLogAlg}.
\end{remark}

\begin{example} \label{Ex:Thakurexamples}
As observed by Thakur~\cite[Rmk.~8.10.1]{Thakur}, we have $P_1(x,z) = xz$, $P_q(x,z) = x^q z - x^q z^q$, and (for $q > 2$) $P_{q+1}(x,z) = x^{q+1}z - x^{2q} z^q$.  If we consider $m = 2q$ for $q >2$, then Theorem~\ref{T:Thakur}(b) implies
\begin{align*}
  P_{2q}(x,z) &= C_{e_{11}(\theta) e_{11}(\theta)}(x^2z)
  - 2 C_{e_{11}(\theta)}\bigl( C_{\theta}(x)\cdot xz \bigr) + C_{\theta}(x)^2 \cdot z \\
  &= C_{\theta^2}(x^2 z) - 2C_{\theta}\bigl( (\theta x^2 + x^{q+1}) z\bigr) + (\theta x + x^q)^2 z \\
  &= x^{2q} z - \bigl( (\theta^q -\theta) x^{2q} + 2x^{q^2+q} \bigr)z^q + x^{2q^2} z^{q^2},
\end{align*}
which agrees with~\cite[Eq.~(27)]{And96}.  For other examples using Theorem~\ref{T:Thakur}, we find for $q>2$,
\begin{align*}
  P_{q^2}(x,z) &= C_{\theta^2}(x) \cdot z - C_{\theta^q + \theta}\bigl( C_{\theta}(x) \cdot z \bigr) + C_{\theta^{q+1}}(xz), \\
  P_{q^2+1}(x,z) &= C_{\theta^2}(x)\cdot xz - C_{\theta^q + \theta}\bigl( C_{\theta}(x)\cdot xz \bigr) + C_{\theta^{q+1}}(x^2 z), \\
  P_{q^2+q}(x,z) &= \begin{aligned}[t]
  C_{\theta}&(x)\cdot C_{\theta^2}(x)\cdot z - C_{\theta}\bigl( C_{\theta^2}(x)\cdot xz \bigr) - C_{\theta^q+\theta} \bigl( C_{\theta}(x)^2 \cdot z \bigr) \\& {}+ C_{\theta^{q+1}+\theta^2} \bigl( C_{\theta}(x)\cdot xz \bigr)+ C_{\theta^{q+1}} \bigl( C_{\theta}(x)\cdot xz \bigr) - C_{\theta^{q+2}}(x^2z),
  \end{aligned} \\
  P_{2q^2}(x,z) &= \begin{aligned}[t]
  C_{\theta^2}&(x)^2 \cdot z - C_{\theta^q+\theta} \bigl( C_{\theta_2}(x) \cdot C_{\theta}(x) \cdot z \bigr)
  + C_{\theta^{2q} + 2\theta^{q+1} + \theta^2}\bigl( C_{\theta}(x)^2\cdot z \bigr) \\
  &{}+C_{\theta^{q+1}} \bigl( C_{\theta^2}(x)\cdot z \bigr) - C_{\theta^{2q+1} + \theta^{q+2}} \bigl( C_{\theta}(x)\cdot xz \bigr)
  +C_{\theta^{2q+2}}(xz),
  \end{aligned}
  \\
  P_{q^3}(x,z) &= \begin{aligned}[t]
  C_{\theta^3}&(x)\cdot z - C_{\theta^{q^2} + \theta^q + \theta} \bigl( C_{\theta^2}(x)\cdot z \bigr)
    + C_{\theta^{q^2+q} + \theta^{q^2+1} + \theta^{q+1}} \bigl(C_{\theta}(x)\cdot z \bigr) \\
    & {}- C_{\theta^{q^2+q+1}}(xz).
  \end{aligned}
\end{align*}
\end{example}

The remainder of this section is devoted to a new proof of Theorem~\ref{T:Thakur}.  We first need some lemmas on symmetric polynomials.

\begin{lemma} \label{L:ehdiff}
Let $d \geqslant 1$, and let $T$ be a variable independent from $x_1, \dots, x_d$.
\begin{enumerate}
\item[(a)] For $0 \leqslant k \leqslant d$,
\[
  e_{d,d-k}(T-x_1, \dots, T-x_d) = \sum_{j=k}^{d} (-1)^{d-j} \binom{j}{k} e_{d,d-j}(x_1,\dots, x_d) T^{j-k}.
\]
\item[(b)] For $k \geqslant 0$,
\[
  h_{d,k}(T-x_1, \dots, T-x_d) = \sum_{j=0}^k (-1)^j \binom{d+k-1}{k-j} h_{d,j}(x_1, \dots, x_d) T^{k-j}.
\]
\end{enumerate}
\end{lemma}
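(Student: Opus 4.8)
The plan is to prove both identities by manipulating generating functions, exploiting the defining relations \eqref{E:eijdef} and \eqref{E:hijdef} together with the binomial theorem for $(1+(T-x_\ell)t)$ and $\tfrac{1}{1-(T-x_\ell)t}$. For part (a), I would start from
\[
  \sum_{k=0}^{d} e_{d,d-k}(T-x_1,\dots,T-x_d)\,s^{d-k}
  = \prod_{\ell=1}^{d}\bigl(1+(T-x_\ell)s\bigr).
\]
The cleaner route, however, is to recognize that $e_{d,d-k}(y_1,\dots,y_d)$ is (up to sign) a coefficient of the polynomial $\prod_\ell (u-y_\ell)$: indeed $\prod_{\ell=1}^d(u-y_\ell) = \sum_{m=0}^d (-1)^{d-m} e_{d,d-m}(y_1,\dots,y_d)\,u^{m}$. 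Applying this with $y_\ell = T-x_\ell$ gives $\prod_\ell(u-T+x_\ell)$, and applying it with $y_\ell = x_\ell$ and $u$ replaced by $u-T$ gives $\prod_\ell(u-T+x_\ell) = \sum_{j}(-1)^{d-j}e_{d,d-j}(x_1,\dots,x_d)(u-T)^j$. Expanding $(u-T)^j = \sum_{k}\binom{j}{k}u^k(-T)^{j-k}$ and comparing the coefficient of $u^k$ on both sides yields
\[
  (-1)^{d-k}e_{d,d-k}(T-x_1,\dots,T-x_d)
  = \sum_{j=k}^{d}(-1)^{d-j}e_{d,d-j}(x_1,\dots,x_d)\binom{j}{k}(-T)^{j-k},
\]
and dividing by $(-1)^{d-k}$ and simplifying the sign $(-1)^{-j+k}(-1)^{j-k}=1$ on the right gives exactly the stated formula. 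This is essentially a change-of-basis identity between the bases $\{u^k\}$ and $\{(u-T)^k\}$ of polynomials of degree $\le d$, so there is no real obstacle here beyond bookkeeping of signs.

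For part (b), the analogous device is to use the generating function in full. Writing $g(t) := \prod_{\ell=1}^d \tfrac{1}{1-(T-x_\ell)t} = \sum_{k\ge 0} h_{d,k}(T-x_1,\dots,T-x_d)\,t^k$, I would factor $1-(T-x_\ell)t = (1-Tt)\bigl(1 - \tfrac{-x_\ell t}{1-Tt}\bigr)$, so that
\[
  g(t) = \frac{1}{(1-Tt)^d}\cdot \prod_{\ell=1}^d \frac{1}{1 - \bigl(\tfrac{-x_\ell}{1-Tt}\bigr)t}
  = \frac{1}{(1-Tt)^d}\sum_{j\ge 0} h_{d,j}(-x_1,\dots,-x_d)\,\frac{t^{j}}{(1-Tt)^{j}}.
\]
Now $h_{d,j}(-x_1,\dots,-x_d) = (-1)^j h_{d,j}(x_1,\dots,x_d)$ by homogeneity, and $\tfrac{1}{(1-Tt)^{d+j}} = \sum_{n\ge 0}\binom{d+j+n-1}{n}T^n t^n$. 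Collecting the coefficient of $t^k$ forces $n = k-j$, giving
\[
  h_{d,k}(T-x_1,\dots,T-x_d) = \sum_{j=0}^{k}(-1)^j h_{d,j}(x_1,\dots,x_d)\binom{d+k-1}{k-j}T^{k-j},
\]
which is the claimed identity. The only slightly delicate point is checking the binomial coefficient $\binom{(d+j)+(k-j)-1}{k-j} = \binom{d+k-1}{k-j}$ lands correctly, which it does since the top entry is independent of $j$.

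I expect the main obstacle — such as it is — to be purely notational: keeping the signs straight in (a) and confirming the collapse of the double sum to a single binomial coefficient in (b). Both parts are formal consequences of the generating-function definitions \eqref{E:eijdef}, \eqref{E:hijdef}, the homogeneity of $e_{dj}$ and $h_{dj}$, and the generalized binomial theorem; no external input is needed. I would present (a) via the polynomial-basis-change argument and (b) via the partial-fractions/geometric-series manipulation above, and then remark that (a) also follows from (b) by the involution $e \leftrightarrow h$ via $\cE_d\cH_d = I_d$ if a unified treatment is preferred.
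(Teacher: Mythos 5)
Your overall strategy is sound and, for part (b), genuinely different from the paper's: the paper proves (b) by a double induction on $(d,k)$ using the recursion \eqref{E:hijrec1} together with Pascal's rule, whereas your factorization $1-(T-x_\ell)t=(1-Tt)\bigl(1-\tfrac{-x_\ell t}{1-Tt}\bigr)$ combined with homogeneity of $h_{d,j}$ and the negative binomial series gives the identity in one pass; the computation checks out, including the collapse $\binom{(d+j)+(k-j)-1}{k-j}=\binom{d+k-1}{k-j}$. For part (a) the paper instead applies the hyperderivative $\pd_T^k$ to $P(T)=\prod_\ell(T-x_\ell)$ and uses the product rule to identify $\pd_T^k(P)$ with $e_{d,d-k}(T-x_1,\dots,T-x_d)$; your change-of-basis argument between $\{u^k\}$ and $\{(u-T)^k\}$ is an equivalent idea, thematically less tied to the rest of the paper but perfectly legitimate.

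However, part (a) as written contains a sign error that makes your displayed intermediate equation false. Substituting $u\mapsto u-T$ into $\prod_\ell(u-x_\ell)=\sum_j(-1)^{d-j}e_{d,d-j}(x_1,\dots,x_d)\,u^j$ produces an expansion of $\prod_\ell(u-T-x_\ell)$, not of $\prod_\ell(u-T+x_\ell)$. The correct expansion of the latter is $\prod_\ell\bigl((u-T)-(-x_\ell)\bigr)=\sum_j(-1)^{d-j}e_{d,d-j}(-x_1,\dots,-x_d)(u-T)^j=\sum_j e_{d,d-j}(x_1,\dots,x_d)(u-T)^j$, with no sign in front (check $d=1$: $u-T+x_1\neq -x_1+(u-T)$). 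With this corrected right-hand side, comparing coefficients of $u^k$ gives $(-1)^{d-k}e_{d,d-k}(T-x_1,\dots,T-x_d)=\sum_{j\geqslant k}(-1)^{j-k}\binom{j}{k}e_{d,d-j}(x_1,\dots,x_d)T^{j-k}$, and dividing by $(-1)^{d-k}$ produces exactly the stated factor $(-1)^{d-j}$. As you wrote it, the signs would instead cancel completely and you would not recover $(-1)^{d-j}$, so the claim that your bookkeeping ``gives exactly the stated formula'' does not hold for the equation you displayed. This is a one-line repair, not a conceptual gap, but it needs to be made.
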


\begin{proof}
Both identities are proved by taking hyperderivatives with respect to~$T$.  For (a), consider the polynomial
\[
P(T) = (T-x_1) \cdots (T-x_d) = \sum_{j=0}^d (-1)^{d-j} e_{d,d-j}(x_1, \dots, x_d) T^j.
\]
By the product rule (see \cite[\S 2.2]{Jeong11} or \cite[\S 2.3]{PLogAlg})
\begin{align*}
  \pd_{T}^k (P(x)) &= \sum_{\substack{k_1, \dots, k_d \geqslant 0 \\ k_1 + \dots + k_d = k }} \pd_{T}^{k_1} (T-x_1) \cdots \pd_T^{k_d}(T-x_d) \\
  &= e_{d,d-k}(T-x_1, \dots, T-x_d).
\end{align*}
On the other hand,
\[
  \pd_{T}^k (P(x)) = \sum_{j=k}^d (-1)^{d-j} \binom{j}{k} e_{d,d-j}(x_1, \dots, x_d) T^{j-k}.
\]

For (b) we proceed by a double induction.  We first note that the result holds for all $k \geqslant 0$ for $h_{1,k} = (T-x_1)^k$ by the binomial theorem.  Now suppose for some $d_0 \geqslant 2$ that the result holds for all $(d,k)$ in the set $\{ (d,k) \mid d \leqslant d_0 - 1,\ k \geqslant 0 \}$.  Now $h_{d_0,0}(x_1, \dots, x_d) = 1$, so the result also holds for $(d,k) = (d_0,0)$, so suppose further that there is some $k_0 \geqslant 1$ so that the result holds for all $(d_0,k)$ with $k \leqslant k_0 - 1$.  If we let $\ox_{i}$ denote the tuple $(x_1, \dots, x_i)$, then using~\eqref{E:hijrec1} the induction hypothesis implies
\begin{align*}
  h_{d_0,k_0}(T-x_1, \dots, T-x_d) &=
  \begin{aligned}[t]
  h_{d_0-1,k_0}(&T-x_1, \dots, T-x_{d_0-1}) \\
  &{} + (T-x_{d_0}) h_{d_0,k_0-1} (T-x_1, \dots, T-x_{d_0})
  \end{aligned}
  \\
  &= \begin{aligned}[t]
  &\sum_{j=0}^{k_0} (-1)^{j} \binom{d_0+k_0-2}{k_0-j} h_{d_0-1,j}(\ox_{d_0-1}) T^{k_0-j} \\
  &{}+ (T-x_{d_0}) \sum_{j = 1}^{k_0} (-1)^{j-1} \binom{d_0 + k_0-2}{k_0-j} h_{d_0,j-1}(\ox_{d_0}) T^{k_0-j}.
  \end{aligned}
\end{align*}
Collecting terms and using~\eqref{E:hijrec1} again, we find that
\begin{multline*}
  h_{d_0,k_0}(T-x_1, \dots, T-x_d) = \sum_{j=0}^{k_0} (-1)^j \biggl( \binom{d_0+k_0-2}{k_0-j} +
   \binom{d_0+k_0-2}{k_0-j-1} \biggr) \\
    {} \times h_{d_0,j}(\ox_{d_0}) T^{k_0-j},
\end{multline*}
and the result follows in the case $(d,k) = (d_0,k_0)$.
\end{proof}

Now for $i \geqslant 1$ and $1 \leqslant \ell \leqslant k \leqslant i-1$, define
\begin{equation} \label{E:Gikldef}
  G_{i,k,\ell} = \sum_{j=k}^i (-1)^{i-j} e_{i-1,i-j}(y_1, \dots, y_\ell, x_{\ell+1}, \dots, x_{i-1}) \cdot
  h_{k,j-k}(x_1, \dots, x_k),
\end{equation}
where $x_1, \dots, x_{i-1}, y_1, \dots, y_{\ell}$ are independent variables over $\ZZ$.  To emphasize the order of the variables when making substitutions, we will write
\[
G_{i,k,\ell} = G_{i,k,\ell}(x_1, \dots, x_k; x_{k+1}, \dots, x_{i-1}; y_1, \dots, y_{\ell}).
\]
Ostensibly these polynomials are in $\ZZ[x_1, \dots, x_{i-1}, y_1, \dots, y_{\ell}]$, but in fact they do not contain the variables $x_{\ell+1}, \dots, x_{k}$, as shown in the following lemma.

\begin{lemma} \label{L:Gikl}
For $i \geqslant 1$ and $1 \leqslant \ell \leqslant k \leqslant i-1$,
\begin{multline*}
  G_{i,k,\ell}(x_1, \dots, x_k; x_{k+1}, \dots, x_{i-1}; y_1, \dots, y_{\ell}) \\
  = G_{i-(k-\ell),\ell,\ell}(x_1, \dots, x_{\ell}; x_{k+1}, \dots, x_{i-1}; y_1, \dots, y_{\ell}).
\end{multline*}
\end{lemma}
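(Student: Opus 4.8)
The plan is to recognize both sides as coefficients of a single power series assembled from the generating functions \eqref{E:eijdef} and \eqref{E:hijdef}. Put $N = i-k \geqslant 1$ and reindex the sum in \eqref{E:Gikldef} by $p = i-j$, so that $0 \leqslant p \leqslant N$ and $j-k = N-p$:
\[
  G_{i,k,\ell} = \sum_{p=0}^{N} (-1)^p\, e_{i-1,p}(y_1,\dots,y_\ell,x_{\ell+1},\dots,x_{i-1}) \cdot h_{k,N-p}(x_1,\dots,x_k).
\]
This is precisely the coefficient of $t^N$ in the product of power series
\[
  \Phi_{i,k,\ell}(t) := \Biggl( \sum_{p\geqslant 0} (-1)^p e_{i-1,p}(y_1,\dots,y_\ell,x_{\ell+1},\dots,x_{i-1}) t^p \Biggr)\Biggl( \sum_{q\geqslant 0} h_{k,q}(x_1,\dots,x_k) t^q \Biggr),
\]
and by \eqref{E:eijdef} with $t \mapsto -t$ together with \eqref{E:hijdef} this factors as
\[
  \Phi_{i,k,\ell}(t) = \prod_{a=1}^\ell (1-y_a t) \cdot \prod_{b=\ell+1}^{i-1}(1-x_b t) \cdot \prod_{r=1}^k \frac{1}{1-x_r t}.
\]

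The main step is the observation that for each index $b$ with $\ell < b \leqslant k$ the factor $(1-x_b t)$ in the middle product cancels against $1/(1-x_b t)$ in the last product, so that
\[
  \Phi_{i,k,\ell}(t) = \prod_{a=1}^\ell (1-y_a t)\cdot \prod_{b=k+1}^{i-1}(1-x_b t)\cdot \prod_{r=1}^\ell \frac{1}{1-x_r t}.
\]
(If $\ell = k$ there is nothing to cancel and the asserted identity is trivial, so we may assume $\ell < k$.) Setting $i' = i-(k-\ell)$, I will recognize the right-hand side as exactly $\Phi_{i',\ell,\ell}(t)$ built from the first argument list $(x_1,\dots,x_\ell)$, the relabeled middle block $(x_{k+1},\dots,x_{i-1})$, and the list $(y_1,\dots,y_\ell)$; here one checks that $e_{i'-1,\cdot}$ then has the correct number $i'-1 = \ell + (i-1-k)$ of arguments and $h_{\ell,\cdot}$ has the arguments $x_1,\dots,x_\ell$. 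Hence $\Phi_{i,k,\ell}(t) = \Phi_{i-(k-\ell),\ell,\ell}(t)$ as elements of $\power{\ZZ[x_1,\dots,x_{i-1},y_1,\dots,y_\ell]}{t}$, and extracting the coefficient of $t^N$ from both sides—noting that $N = i-k = i'-\ell$ is precisely the exponent that recovers $G_{i',\ell,\ell}$ from $\Phi_{i',\ell,\ell}(t)$—yields the lemma.

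There is no genuine obstacle once the generating-function framework is in place: the identity is forced by the telescoping cancellation of the shared linear factors $(1-x_b t)$. The only point demanding care is the bookkeeping of the index ranges, and in particular verifying that, after cancellation, the argument lists of $e_{i'-1,\cdot}$ and $h_{\ell,\cdot}$ coincide with those in the definition of $G_{i-(k-\ell),\ell,\ell}$ under the relabeling $x_{\ell+1},\dots,x_{i'-1}\leftrightarrow x_{k+1},\dots,x_{i-1}$ of the middle variables.
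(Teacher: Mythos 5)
Your proof is correct, and it takes a genuinely different route from the paper's. The paper proves the lemma by showing that $G_{i,k,\ell}$ does not actually involve the variables $x_{\ell+1},\dots,x_k$: by symmetry it suffices to treat $x_k$, and the paper computes $\partial/\partial x_k(G_{i,k,\ell})$ using the recurrences \eqref{E:eijrec1} and \eqref{E:hijrec1}, rearranges, and observes that the resulting inner sums telescope to terms with negative indices, hence vanish; substituting $x_{\ell+1}=\cdots=x_k=0$ then yields the right-hand side. Your argument instead packages $G_{i,k,\ell}$ as the coefficient of $t^{i-k}$ in the rational function $\prod_a(1-y_at)\prod_{b>\ell}(1-x_bt)\prod_{r\leqslant k}(1-x_rt)^{-1}$, where the cancellation of the shared factors $(1-x_bt)$ for $\ell<b\leqslant k$ is immediate, and the index bookkeeping $i-k=i'-\ell$ checks out exactly as you state. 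Your version is shorter and more transparent, and it actually proves more (the two generating functions agree in every degree, not just degree $i-k$); it also sidesteps the characteristic-zero point implicit in the paper's derivative argument (harmless there since the $G$'s live over $\ZZ$, but worth noting). The paper's computation, by contrast, is self-contained at the level of the recurrences it has already set up and needs no passage through power series. Both are complete proofs; yours is the cleaner of the two.
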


\begin{proof}
A short calculation reveals that the right-hand side of this formula is obtained from the left by substituting in $x_{\ell+1} = \cdots = x_{k} = 0$, so the proof reduces to showing that $G_{i,k,\ell}$ does not actually contain $x_{\ell+1}, \dots, x_k$.  As the defining expression for $G_{i,k,\ell}$ is symmetric in $x_{\ell+1}, \dots, x_k$, it suffices to show that it does not contain $x_k$ when $\ell < k$.  (When $\ell=k$, there is nothing to prove.)  Define the following tuples:
\[
  \oy_{\ell} = (y_1, \dots, y_{\ell}), \quad \ox_{\ell,k} = (x_{\ell+1}, \dots, \widehat{x_k}, \dots, x_{i-1}),
  \quad \ox_{j} = (x_1, \dots, x_{j}).
\]
Then, since $h_{kj}(x_1, \dots, x_k) = \sum_{n=0}^j h_{k-1,j-n}(\ox_{k-1}) x_k^n$ (by~\eqref{E:hijrec1}),
\begin{align*}
  \frac{\pd}{\pd x_k} ( G_{i,k,\ell}) &=
  \begin{aligned}[t]
  \sum_{j=k}^i (-1)^{i-j} &\biggl( e_{i-2,i-j-1}(\oy_{\ell},\ox_{\ell,k}) \cdot
    h_{k,j-k}(\ox_k) \\
    &{}+ e_{i-1,i-j}(\oy_{\ell}, x_{\ell+1}, \dots, x_{i-1}) \cdot \sum_{n=0}^{j-k} n\cdot h_{k-1,j-k-n}(\ox_{k-1}) x_{k}^{n-1} \biggr)
  \end{aligned}
  \\
  &= \begin{aligned}[t]
  \sum_{j=k}^i (-1)^{i-j} & \biggl( e_{i-2,i-j-1}(\oy_{\ell},\ox_{\ell,k}) \sum_{n=0}^{j-k} h_{k-1,j-k-n}(\ox_{k-1}) x_k^{n} \\
  &{}+ \bigl( e_{i-2,i-j}(\oy_{\ell},\ox_{\ell,k}) + x_k e_{i-2,i-j-1}(\oy_{\ell},\ox_{\ell,k})) \\
  &{}\times \sum_{n=0}^{j-k} n \cdot h_{k-1,j-k-n}(\ox_{k-1}) x_k^{n-1} \biggr),
  \end{aligned}
\end{align*}
after applying \eqref{E:eijrec1}.  By rearranging terms and reordering the sums we finally obtain
\begin{multline*}
  \frac{\pd}{\pd x_k} ( G_{i,k,\ell}) = \sum_{n=0}^{i-k} \Biggl( \sum_{j=n+k}^i (-1)^{i-j} \Bigl( e_{i-2,i-j-1}(\oy_{\ell}, \ox_{\ell,k}) h_{k-1,j-k-n} (\ox_{k-1}) \\
  {}+ e_{i-2,i-j} (\oy_{\ell}, \ox_{\ell,k}) h_{k-1,j-k-n-1}(\ox_{k-1}) \Bigr) \Biggr) (n+1) x_k^n.
\end{multline*}
The inner sum telescopes leaving only
\begin{multline*}
  \frac{\pd}{\pd x_k} ( G_{i,k,\ell}) = (-1)^{i-n-k} e_{i-2,i-n-k}(\oy_{\ell},\ox_{\ell,k}) \cdot h_{k-1,-1}(\ox_{k-1}) \\
  {}+ e_{i-2,-1}(\oy_{\ell},\ox_{\ell,k}) \cdot h_{k-1,i-k-n}(\ox_{k-1}),
\end{multline*}
and since both of these terms are zero, as they have terms with negative indices, we see that $(\pd/\pd x_k)(G_{i,k,\ell}) \equiv 0$ identically.  Thus $G_{i,k,\ell}$ does not contain the variable~$x_k$.
\end{proof}

\begin{proof}[Proof of Theorem~\ref{T:Thakur}(a)]
Although part (a) of Theorem~\ref{T:Thakur} is a special case of part (b), once complete the argument for (a) will simplify the argument for (b).  For $i \geqslant 0$, we let
\begin{equation} \label{E:lambdadef}
  \lambda_i(q^{\mu}) = \sum_{a \in A_{i+}} \frac{C_a(x)^{q^\mu}}{a},
\end{equation}
so that
\begin{equation} \label{E:lambdasum}
P_{q^\mu}(x,z) = \exp_C \biggl( \sum_{i=0}^{\infty} \lambda_i (q^{\mu}) z^{q^i} \biggr).
\end{equation}
The major focus of the proof is to find a simplification for $\lambda_i(q^{\mu})$.  Using Proposition~\ref{P:newbrac}, we see that
\begin{align*}
  \lambda_i(q^{\mu}) &= \sum_{a \in A_{i+}} \frac{1}{a} \sum_{k=0}^i \brac{a}{k}^{q^{\mu}} x^{q^{k+\mu}} \\
  &= \sum_{a \in A_{i+}} \frac{1}{a} \sum_{k=0}^{i} \sum_{j=k}^{i} \pd^j_{\theta}(a)^{q^{\mu}} \cdot
  h_{k,j-k}([1], \dots, [k])^{q^{\mu}} \cdot x^{q^{k+\mu}}.
\end{align*}
By reordering the sum and applying the calculation from Example~\ref{Ex:two}, we see that
\begin{align} \label{E:newbracsub}
  \lambda_i(q^{\mu}) &= \sum_{k=0}^i \sum_{j=k}^i h_{k,j-k}\bigl( [1]^{q^\mu}, \dots, [k]^{q^\mu} \bigr) \cdot
  x^{q^{k+\mu}} \sum_{a \in A_{i+}} \frac{ \pd_{\theta}^j(a)^{q^{\mu}}}{a} \\
  &= \begin{aligned}[t]
  \sum_{k=0}^i & \sum_{j=k}^i h_{k,j-k}\bigl( [1]^{q^\mu}, \dots, [k]^{q^\mu} \bigr) \cdot
  x^{q^{k+\mu}} \\
  & {}\times \frac{(-1)^{i-j}}{L_i} \cdot e_{i,i-j} \bigl( \theta - \theta^{q^{\mu}}, \theta^q - \theta^{q^{\mu}},
  \dots, \theta^{q^{i-1}} - \theta^{q^{\mu}} \bigr).
  \end{aligned} \notag
\end{align}
Just as in Example~\ref{Ex:two}, there are the two cases $i \leqslant \mu$ and $i > \mu$.  We will consider here the case $i > \mu$.  The case where $i \leqslant \mu$ is similar with the same resulting formula, and for the sake of space we leave it to the reader.  Using the formula in Example~\ref{Ex:two}(b), we then see that
\begin{multline} \label{E:simplifysub}
  \lambda_i(q^{\mu}) = \sum_{k=0}^i \sum_{j=k}^i (-1)^{i-j} e_{i-1,i-j} \bigl( -[\mu], \dots, -[1]^{q^{\mu-1}},
  [1]^{q^{\mu}}, \dots, [i-\mu-1]^{q^{\mu}} \bigr) \\
  {}\times h_{k,j-k} \bigl( [1]^{q^{\mu}}, \dots, [k]^{q^{\mu}} \bigr) \cdot \frac{x^{q^{k+\mu}}}{L_i}.
\end{multline}
By Proposition~\ref{P:symmrec2}, the inner sum vanishes when $k \leqslant i - \mu -1$, and so using~\eqref{E:Gikldef} we have
\begin{multline*}
  \lambda_i(q^{\mu}) = \sum_{k=i-\mu}^i G_{i,k,k-i+\mu+1} \bigl( [k]^{q^{\mu}}, \dots, [1]^{q^{\mu}};
  -[1]^{q^{\mu-1}}, \dots, -[i-k-1]^{q^{\mu+k-i+1}}; \\ -[i-k]^{q^{\mu+k-i}}, \dots, -[\mu] \bigr) \cdot \frac{x^{q^{k+\mu}}}{L_i}.
\end{multline*}
Lemma~\ref{L:Gikl} then implies that if we set $\ell_k \assign k-i+\mu+1$, then
\begin{align*}
  \lambda_i(q^{\mu}) &=
  \begin{aligned}[t]
  \sum_{k=i-\mu}^i G_{\mu+1,\ell_k,\ell_k} \bigl( [k]^{q^{\mu}},  \dots, [i-\mu]^{q^{\mu}};
  -[1]^{q^{\mu-1}}, & \dots, -[i-k-1]^{q^{\mu+k-i+1}}; \\
  &-[i-k]^{q^{\mu+k-i}}, \dots, -[\mu] \bigr) \cdot \frac{x^{q^{k+\mu}}}{L_i}.
  \end{aligned}
  \\
  &= \begin{aligned}[t]
  \sum_{\ell=1}^{\mu+1} G_{\mu+1,\ell,\ell} \bigl( [i-\mu+\ell-1]^{q^{\mu}}, \dots, & [i - \mu]^{q^{\mu}};
  -[1]^{q^{\mu-1}}, \dots, -[\mu-\ell]^{q^{\ell}}; \\
  & -[\mu-\ell+1]^{q^{\ell-1}}, \dots, -[\mu] \bigr) \cdot \frac{x^{q^{i+\ell-1}}}{L_i}.
  \end{aligned}
\end{align*}
Although this looks complicated, the important thing is that the number of variables in $G_{\mu+1,\ell,\ell}$ is bounded independent of~$i$.  Moreover, applying the definition of $G_{\mu+1,\ell,\ell}$ from~\eqref{E:Gikldef} and reordering the sum, we find
\begin{multline} \label{E:lambdainter}
  \lambda_i(q^{\mu}) =
    \sum_{j=1}^{\mu+1} \sum_{\ell=1}^{j} (-1)^{\mu+1-j} e_{\mu,\mu+1-j} \bigl( -[1]^{q^{\mu-1}}, -[2]^{q^{\mu-2}}, \dots, -[\mu] \bigr) \\
     {} \times h_{\ell,j-\ell} \bigl( [i-\mu+\ell-1]^{q^{\mu}}, \dots, [i-\mu]^{q^{\mu}} \bigr) \cdot
    \frac{x^{q^{i+\ell-1}}}{L_i}.
\end{multline}
Now by Proposition~\ref{P:bracthetam} and Lemma~\ref{L:ehdiff}(b), we see that
\begin{align*}
  h_{\ell,j-\ell} &\bigl( [i-\mu+\ell-1]^{q^{\mu}}, \dots, [i-\mu]^{q^{\mu}} \bigr) \\
  &= h_{\ell,j-\ell} \bigl( \theta^{q^{i+\ell-1}} - \theta^{q^{\mu}}, \dots, \theta^{q^{i}} - \theta^{q^{\mu}} \bigr) \\
  &= (-1)^{j-\ell} \sum_{n=0}^{j-\ell} (-1)^n \binom{j-1}{j-\ell-n} h_{\ell,n} \bigl(\theta, \theta^q, \dots, \theta^{q^{\ell-1}} \bigr)^{q^i} \cdot \theta^{(j-\ell-n)q^{\mu}} \\
  &= \sum_{n=0}^{j-\ell} (-1)^{j-\ell+n} \binom{j-1}{j-\ell-n} \theta^{(j-\ell-n)q^{\mu}} \cdot
  \brac{\theta^{\ell+n-1}}{\ell-1}^{q^i}.
\end{align*}
Therefore, after some reordering and reindexing of sums,
\begin{align} \label{E:hsum}
  \sum_{\ell=1}^j h_{\ell,j-\ell} &\bigl( [i-\mu+\ell-1]^{q^{\mu}}, \dots, [i-\mu]^{q^{\mu}} \bigr) \cdot
  \frac{x^{q^{i+\ell-1}}}{L_i} \\
  &= \sum_{n=0}^{j-1} \sum_{\ell=1}^{j-n} (-1)^{j-\ell+n} \binom{j-1}{j-\ell-n} \theta^{(j-\ell-n)q^{\mu}} \cdot \brac{\theta^{\ell+n-1}}{\ell-1}^{q^i} \cdot
  \frac{x^{q^{i+\ell-1}}}{L_i} \notag \\
  &= \sum_{n=0}^{j-1} \sum_{d=n}^{j-1} (-1)^{j-d-1} \binom{j-1}{j-d-1} \theta^{(j-d-1)q^{\mu}} \cdot \brac{\theta^{d}}{d-n}^{q^i} \cdot
  \frac{x^{q^{i+d-n}}}{L_i} \notag \\
  &= \sum_{d=0}^{j-1} (-1)^{j-d-1} \binom{j-1}{d} \theta^{(j-d-1)q^{\mu}} \sum_{n=0}^d
  \brac{\theta^d}{d-n}^{q^i} \cdot \frac{ \bigl(x^{q^{d-n}}\bigr)^{q^i}}{L_i}. \notag \\
  &= \sum_{d=0}^{j-1} (-1)^{j-d-1} \binom{j-1}{d} \theta^{(j-d-1)q^{\mu}} \cdot
  \frac{C_{\theta^d}(x)^{q^i}}{L_i}. \notag
\end{align}
We then substitute \eqref{E:hsum} into~\eqref{E:lambdainter}, and find
\begin{align} \label{E:lambdafinal}
  \lambda_i(q^{\mu}) &=
  \begin{aligned}[t]
     \sum_{j=1}^{\mu+1} &\ e_{\mu,\mu+1-j}([1]^{q^{\mu-1}}, \dots, [\mu]) \\
     & {} \times \sum_{d=0}^{j-1} (-1)^{j-d-1} \binom{j-1}{d} \theta^{(j-d-1)q^{\mu}} \cdot
     \frac{C_{\theta^d}(x)^{q^i}}{L_i}
  \end{aligned}
  \\
  &= \sum_{j=0}^{\mu} \sum_{d=0}^{j} (-1)^{j-d} e_{\mu,\mu-j}([1]^{q^{\mu-1}},\dots, [\mu])
     \binom{j}{d} \theta^{(j-d)q^{\mu}} \cdot \frac{C_{\theta^d}(x)^{q^i}}{L_i} \notag \\
  &= \sum_{d=0}^{\mu} (-1)^{\mu-d} e_{\mu,\mu-d} \bigl( \theta^{q^{\mu}} - [1]^{q^{\mu-1}}, \theta^{q^{\mu}}
  -[2]^{q^{\mu-2}}, \dots, \theta^{q^{\mu}} - [\mu] \bigr) \cdot \frac{C_{\theta^d}(x)^{q^i}}{L_i}  \notag \\
  &= \sum_{d=0}^{\mu} (-1)^{\mu-d} e_{\mu,\mu-d} \bigl( \theta^{q^{\mu-1}}, \theta^{q^{\mu-2}}, \dots, \theta) \cdot \frac{C_{\theta^d}(x)^{q^i}}{L_i}, \notag
\end{align}
where in the third equality we have applied Lemma~\ref{L:ehdiff}(a).  From this we see that
\begin{equation}
  \sum_{i=0}^{\infty} \lambda_i(q^{\mu}) z^{q^i} =
  \sum_{d=0}^{\mu} (-1)^{\mu-d} e_{\mu,\mu-d} \bigl(\theta, \theta^q, \dots, \theta^{q^{\mu-1}}) \cdot \log_C \bigl(
  C_{\theta^d}(x) z \bigr),
\end{equation}
which after applying $\exp_C(z)$ to both sides yields the desired result.
\end{proof}

\begin{proof}[Proof of theorem~\ref{T:Thakur}(b)]
The proof of part (b) runs along the same lines as part (a), but with more bookkeeping.  Nevertheless, we have carried out much of the difficult calculations in (a).  As in~\eqref{E:lambdadef}, we set
\[
  \lambda_i(m) = \sum_{a \in A_{i+}} \frac{C_a(x)^m}{a},
\]
so that $P_m(x,z) = \exp_C \bigl( \sum_{i=0}^{\infty} \lambda_i(m) z^{q^i} \bigr)$.  Now since $m = q^{\mu_1} + \dots + q^{\mu_s}$, we see that
\begin{align} \label{E:lambdaim}
  \lambda_i(m) &= \sum_{a \in A_{i+}} \Biggl( \sum_{k_1=0}^i \brac{a}{k_1}^{q^{\mu_1}} x^{q^{k_1+\mu_1}} \Biggr) \cdots \Biggl( \sum_{k_s=0}^i \brac{a}{k_s}^{q^{\mu_s}} x^{q^{k_s+\mu_s}} \Biggr) \frac{1}{a} \\
  &= \sum_{k_1=0}^i \cdots \sum_{k_s=0}^i \Biggl( \sum_{a \in A_{i+}} \brac{a}{k_1}^{q^{\mu_1}} \cdots
    \brac{a}{k_s}^{q^{\mu_s}} \cdot \frac{1}{a} \Biggr) x^{q^{k_1+\mu_1} + \cdots + q^{k_s+\mu_s}}. \notag
\end{align}
We apply Proposition~\ref{P:newbrac} to the inner sum and find as in~\eqref{E:newbracsub},
\begin{align} \label{E:bracsum}
   \sum_{a \in A_{i+}} &\ \brac{a}{k_1}^{q^{\mu_1}} \cdots \brac{a}{k_s}^{q^{\mu_s}} \cdot \frac{1}{a} \\
   &= \begin{aligned}[t]
     \sum_{j_1=k_1}^i \cdots \sum_{j_s=k_s}^i &\ h_{k_1,j_1-k_1} \bigl( [1], \dots, [k_1] \bigr)^{q^{\mu_1}} \cdots h_{k_s,j_s-k_s} \bigl([1], \dots, [k_s] \bigr)^{q^{\mu_s}} \\
     &{} \times \sum_{a \in A_{i+}} \frac{\pd_{\theta}^{j_1}(a)^{q^{\mu_1}} \cdots \pd_{\theta}^{j_s}(a)^{q^{\mu_s}}}{a}. \notag
   \end{aligned}
\end{align}
Now the value of the final hyperderivative sum has been obtained in Theorem~\ref{T:hypersum}, and we can use the methods of Example~\ref{Ex:two} to simplify it as we did in~\eqref{E:newbracsub} and~\eqref{E:simplifysub}.  Again the cases where $i \leqslant \mu_r$ are similar with the same resulting formula, but for the purpose of space we leave the details to the reader.  We assume then that $i > \mu_r$ for each $r=1, \dots, s$, and obtain that
\begin{align*}
  \lambda_i(m) &= \begin{aligned}[t]
    \sum_{k_1=0}^i &{}\cdots \sum_{k_s=0}^i \frac{x^{q^{k_1+\mu_1} + \cdots + q^{k_s+\mu_s}}}{L_i}
    \Biggl( \sum_{j_1=0}^{k_1} \cdots \sum_{j_s=0}^{k_s} \cdot \prod_{r=1}^{s} (-1)^{i-j_r} \\
    &{}\times e_{i-1,i-j_r} \bigl(-[\mu_r], \dots, -[1]^{q^{\mu_r-1}}, [1]^{q^{\mu_r}}, \dots, [i-\mu_r-1]^{q^{\mu_r}} \bigr) \\
    &{}\times h_{k_r,j_r-k_r} \bigl( [1]^{q^{\mu_r}}, \dots, [k_r]^{q^{\mu_r}} \bigr) \Biggr)
  \end{aligned}
  \\
  &= \begin{aligned}[t]
    \frac{1}{L_i} \prod_{r=1}^{s} &\Biggl( \sum_{k_r=0}^{i} \sum_{j_r=k_r}^{i} (-1)^{i-j_r} e_{i-1,i-j_r} \bigl(
    -[\mu_r], \dots, [i-\mu_r-1]^{q^{\mu_r}}\bigr) \\
    & {}\times h_{k_r,j_r-k_r} \bigl( [1]^{q^{\mu_r}}, \dots, [k_r]^{q^{\mu_r}} \bigr) \cdot x^{q^{k_r+\mu_r}} \Biggr).
  \end{aligned}
\end{align*}
The inner double sum has already been evaluated in the proof of part (a), starting with~\eqref{E:simplifysub}.  Therefore, using~\eqref{E:lambdafinal}, we obtain
\begin{equation}
  \lambda_i(m) = \frac{1}{L_i} \prod_{r=1}^s \sum_{d_r=0}^{\mu_r} (-1)^{\mu_r-d_r} e_{\mu_r,\mu_r-d_r} \bigl( \theta^{q^{\mu_r-1}}, \theta^{q^{\mu_r-2}}, \dots, \theta \bigr) \cdot C_{\theta^{d_r}}(x)^{q^i},
\end{equation}
and so
\begin{multline}
  \sum_{i=0}^{\infty} \lambda_i(m) z^{q^i} = \sum_{d_1=0}^{\mu_1} \cdots \sum_{d_s=0}^{\mu_s} (-1)^{\sum \mu_r - \sum d_r} \\
  {}\times \prod_{r=1}^s e_{\mu_r,\mu_r-d_r} \bigl(\theta, \theta^q, \dots, \theta^{q^{\mu_r-1}} \bigr)
  \cdot \log_C \Biggl( \Biggl( \prod_{r=1}^{s} C_{\theta^{d_r}}(x) \Biggr) \cdot z \Biggr),
\end{multline}
which yields the desired result after exponentiation.
\end{proof}

\section{Proof of Anderson's theorem} \label{S:Anderson}

At the suggestion of the referee, in this section we demonstrate how the techniques in this paper in conjunction with results of Pellarin and Perkins~\cite{PellarinPerkins20} can be used to prove Anderson's Theorem~\ref{T:Anderson}.  In particular we show that $P_m(x,z) = \cP(x^m,z) \in A[x,z]$ for all $m \geqslant 0$.

For $s$, $i \geqslant 0$, we let
\begin{equation} \label{E:Ssidef}
  S_{s,i} \assign \sum_{a \in A_{i+}} \frac{a(t_1) \cdots a(t_s)}{a}
\end{equation}
and
\begin{equation} \label{E:Fsidef}
  F_{s,i+1} \assign \sum_{j = 0}^i S_{s,j}.
\end{equation}
The proof of Theorem~\ref{T:Thakur} hinged on the hyperderivative formulas in Theorem~\ref{T:hypersum}, which were restricted to $1 \leqslant s \leqslant q-1$.  However, as mentioned in Remark~\ref{R:othertechniques} it is possible to derive Theorem~\ref{T:hypersum} from Proposition~\ref{P:AnglesPellarin}.  More specifically, in the notation of the statement of Theorem~\ref{T:hypersum}, one finds from Lemma~\ref{L:Taylor} that
\begin{equation} \label{E:hypersumred1}
  \sum_{a \in A_{i+}} \frac{\pd_{\theta}^{j_1}(a)^{q^{\mu_1}} \cdots \pd_{\theta}^{j_s}(a)^{q^{\mu_s}}}{a}
  = \bigl( \pd_{t_1}^{j_1} \circ \cdots \circ \pd_{t_s}^{j_s}\bigr) \bigl( S_{s,i} \bigr) \big|_{t_1=\theta^{q^{\mu_1}}, \ldots, t_s=\theta^{q^{\mu_s}}}.
\end{equation}
However, Proposition~\ref{P:AnglesPellarin} with $1 \leqslant s \leqslant q-1$, combined with the product rule, yields
\begin{equation} \label{E:Ssired1}
  \bigl( \pd_{t_1}^{j_1} \circ \cdots \circ \pd_{t_s}^{j_s}\bigr) \bigl( S_{s,i} \bigr) = \frac{1}{L_i} \prod_{r=1}^{s} (-1)^{i-j_r} e_{i,i-j_r} \bigl( \theta-t_r, \theta^q-t_r, \ldots, \theta^{q^{i-1}}-t_r \bigr),
\end{equation}
from which Theorem~\ref{T:hypersum} follows after substitution.

Now the formula in~\eqref{E:hypersumred1} holds for any $s \geqslant 1$, and it is the subsequent formula in \eqref{E:Ssired1} for $1 \leqslant s \leqslant q-1$ that allows for the proof of Theorem~\ref{T:Thakur}.  There are formulas for $S_{s,i}$ that hold for arbitrary $s$ in \cite{PellarinPerkins20} that lose some of the precision of Proposition~\ref{P:AnglesPellarin} but that can still be used to derive formulas like \eqref{E:Ssired1} in these more general cases.  Because of this diminished precision the resulting identities for~$P_m(x,z)$ are not as explicit as in Thakur's Theorem~\ref{T:Thakur}, but we retain enough information to prove Anderson's Theorem~\ref{T:Anderson}.

Now as in Theorem~\ref{T:Thakur}, we can express $m = q^{\mu_1} + \dots + q^{\mu_s}$, but since we will allow arbitrary~$s$, it will be simpler to take $\mu_1 = \cdots = \mu_s = 0$.  Thus, $m=s$ throughout this section.  (This also bypasses the need for the $G_{i,k,\ell}$ polynomials of \S\ref{S:Thakur}.)  Because Theorem~\ref{T:Thakur} already covers the case $s \leqslant q-1$, we will also assume that $s \geqslant q$.

Before presenting the proof of Theorem~\ref{T:Anderson}, we focus on identities for $S_{s,i}$ and $F_{s,i+1}$ derived from~\cite{PellarinPerkins20}.  Indeed~\cite[Thm.~7]{PellarinPerkins20} provides a formula for $S_{s,i}$ when $i \geqslant \lfloor (s-1)/(q-1) \rfloor$ and arbitrary $s$, but in order to show that $P_s(x,z) \in A[x,z]$ we need somewhat more refined information about the coefficients that make up the identities for $S_{s,i}$ as well as identities that are valid for all $i \geqslant 0$.  Thus, instead of \cite[Thm.~7]{PellarinPerkins20} we build on identities and techniques from the proof of \cite[Thm.~2]{PellarinPerkins20}.

\begin{remark}
Demeslay~\cite[Thm.~3.3.6]{DemeslayPhD} has derived remarkable identities for $S_{s,i}$ for all $s \geqslant 1$, which could shed light on the developments here.  However, Demeslay's results are derived from multivariable log-algebraicity identities of Angl\`{e}s, Pellarin, and Tavares Ribeiro~\cite{AnglesPellarinTavares18}, which themselves are generalizations of Anderson's Theorem~\ref{T:Anderson}.  Because our purpose in this section is to give an independent proof of Theorem~\ref{T:Anderson}, we do not explore Demeslay's results here.  For more information about Demeslay's identities, the reader is also directed to Gezmi\c{s}~\cite[\S 2.1--2]{Gezmis19}.
\end{remark}

We review some of the essential notation from Pellarin and Perkins~\cite{PellarinPerkins20}, but for expedience we will assume that the reader has some facility with the notation and constructions there.  When possible we have tried to use their notation, unless we have already defined notation for the same object (e.g., their $b_i(t)$ is $\mu_i(t)$ of \eqref{E:mudef} in the present paper).  Recalling the polynomials $E_i(z) \in K[z]$ from~\eqref{E:EbracCarlitz}, for $j = j_0 + j_1 q + \cdots + j_u q^{u}$ with $0 \leqslant j_k \leqslant q-1$, we set
\[
  G_j(z) \assign E_0^{j_0} \cdots E_{u}^{j_{u}} \in K[z],
\]
with $G_0 = 1$.  Since $\deg_z(G_j) = j$, the polynomials $\{ G_j \}$ form a $K$-basis of $K[z]$.  For $\uell = (\ell_1, \dots, \ell_{\nu}) \in \ZZ_{\geqslant 0}^{\nu}$, we can uniquely define a sequence $\{ \epsilon_{\uell,j} \} \subseteq K$ so that
\begin{equation} \label{E:Eproddec}
  E_{\ell_1} \cdots E_{\ell_{\nu}} = \sum_{j \geqslant 0} \epsilon_{\uell,j} G_j.
\end{equation}
We note that $\epsilon_{\uell,j} = 0$ for $j > \deg_z(E_{\ell_1} \cdots E_{\ell_{\nu}}) = q^{\ell_1} + \dots + q^{\ell_{\nu}}$.  As necessary we allow $\nu$ to vary.

Now fix $\sigma \in \ZZ$ with $s < \sigma$ so that $\sigma = 1 + \rho(q-1)$ with $\rho \geqslant 2$, and let $i \geqslant 0$.  As in the proof of~\cite[Thm.~7]{PellarinPerkins20}, we observe that $S_{s,i}$ is the coefficient of $t_{s+1}^i \cdots t_{\sigma}^i$ in $F_{\sigma,i+1}$.  Moreover, from~\cite[\S 2.1.2]{PellarinPerkins20}, Pellarin and Perkins prove
\begin{equation} \label{E:Fsigmasum1}
  F_{\sigma,i+1} = -\sum_{h=0}^{\lfloor \log_q(\sigma) \rfloor - 1} \frac{1}{L_{i+1+h}}
  \sum_{(\ell_1, \ldots, \ell_{\sigma}) \leqslant i} \epsilon_{(\ell_1, \ldots, \ell_{\sigma}),q^{i+1+h}} \mu_{\ell_1}(t_1) \cdots \mu_{\ell_{\sigma}}(t_{\sigma}), \quad i \geqslant 0,
\end{equation}
where $\uell \leqslant i$ indicates that each entry of $\uell$ is at most~$i$.  If $i \geqslant \rho-1$, then as Pellarin and Perkins observe, $F_{\sigma,i+1}|_{t_i = \theta^{q^{\nu}}} = 0$ for $0 \leqslant \nu \leqslant i-\rho$, and so this identity simplifies as
\begin{align} \label{E:Fsigmasum2}
    F_{\sigma,i+1} = -&\sum_{h=0}^{\lfloor \log_q(\sigma) \rfloor - 1} \frac{1}{L_{i+1+h}} \\
 &{} \times \sum_{i+1 -\rho \leqslant (\ell_1, \ldots, \ell_{\sigma}) \leqslant i} \epsilon_{(\ell_1, \ldots, \ell_{\sigma}),q^{i+1+h}} \mu_{\ell_1}(t_1) \cdots \mu_{\ell_{\sigma}}(t_{\sigma}), \quad i \geqslant \rho-1. \notag
\end{align}
Our preliminary goal is to build on~\cite[Lem.~9]{PellarinPerkins20} to obtain more detailed information about the coefficients $\epsilon_{(\ell_1, \ldots, \ell_{\sigma}),q^{k}}$.

To this end we develop a kind of combinatorial game for working with decompositions of products of the form $E_{\ell_1} \cdots E_{\ell_{\nu}}$ in \eqref{E:Eproddec}.  Let
\[
  \cS \assign \bigl\{ (w_0, w_1, \ldots ) \in \ZZ_{\geqslant 0}^{\infty} \mid \forall\, k \gg 0,\ w_k = 0 \bigr\},
\]
and for a new variable $Y$, let $\cW$ be the free $A[Y]$-module on $\cS$.  For $\bw = (w_k) \in \cS$, we say $\bw$ is reduced if $w_k \leqslant q-1$ for all $k$.  Otherwise, we let $\kappa=\kappa(\bw)$ denote the smallest index such that $w_{\kappa} \geqslant q$.  We then define functions $u$, $v : \cS \to \cS$ and $m : \cS \to \cW$ by
\begin{align*}
  u(\bw) &\assign \begin{cases}
  \bw & \textup{if $\bw$ is reduced,} \\
  (w_0, \dots, w_{\kappa-1}, w_{\kappa}-(q-1), w_{\kappa+1}, \ldots ) & \textup{otherwise,}
  \end{cases} \\
  v(\bw) &\assign \begin{cases}
  \bw & \textup{if $\bw$ is reduced,} \\
  (w_0, \dots, w_{\kappa-1}, w_{\kappa}-q, w_{\kappa+1}+1, w_{\kappa+2}, \ldots) & \textup{otherwise,}
  \end{cases} \\
  m(\bw) &\assign \begin{cases}
  \bw & \textup{if $\bw$ is reduced,} \\
  u(\bw) + \bigl( Y^{q^{\kappa+1}} - \theta \bigr) v(\bw) & \textup{otherwise.}
  \end{cases}
\end{align*}
We extend $A[Y]$-linearly to obtain maps $u$, $v$, $m : \cW \to \cW$.  For $\bw = (w_k) \in \cS$ we further define the weight and size of $\bw$ to be the non-negative integers
\[
  \wt(\bw) = \sum_{k \geqslant 0} w_k q^k, \quad \sz(\bw) = \sum_{k \geqslant 0} w_k.
\]
It is readily apparent that if $\bw$ is not reduced, then
\begin{align} \label{E:szwt}
  \wt(u(\bw)) &= \wt(\bw) - (q-1)q^{\kappa} & \sz(u(\bw)) &= \sz(\bw) - (q-1), \\
  \wt(v(\bw)) &= \wt(\bw) & \sz(v(\bw)) &= \sz(\bw) - (q-1). \notag
\end{align}
We note that for $B > 0$,
\[
  \#\{ \bw \in \cS \mid \wt(\bw) < B\} < \infty.
\]
For arbitrary $\bx \in \cW$, we set $\wt(\bx)$ and $\sz(\bx)$ to be the maximum weight and size respectively of the generators from $\cS$ in the support of $\bx$, and sets of bounded weight in $\cW$ are supported on a finite set of generators in~$\cS$.

We think of $m$ as being the primary `move' in our game.  For any $\bx \in \cW$, if we let $m^j(\bx) \assign (m \circ \cdots \circ m)(\bx)$ be the $j$-fold iterate of $m$ applied to $\bx$, the sequence
\[
  m(\bx), m^2(\bx), m^3(\bx), \ldots
\]
is a sequence of bounded weight.  Moreover, it eventually stabilizes, since after finitely many applications of $m$ the size of $m^j(\bx)$ is necessarily strictly less than the size of $\bx$ but the weight of $m^j(\bx)$ is no more than the weight of $\bx$.  Thus eventually each generator from $\cS$ in the support of $m^j(\bx)$ is reduced, and we let $M(\bx)\in \cW$ denote this stabilization of this sequence.

\begin{example} \label{Ex:Mexamples}
The following calculations hold.  We let `$\ozero$' denote a sequence of infinitely many $0$'s.
\begin{align*}
M(q, \ozero) &= (1, \ozero) + \bigl( Y^{q} - \theta \bigr) (0,1,\ozero), \\
M(2q-1,\ozero) &= (1,\ozero) + \bigl(Y^{q} - \theta \bigr) (0,1,\ozero) + (Y^q-\theta)(q-1,1,\ozero), \\
M(0,q,\ozero) &= (0,1,\ozero) + \bigl(Y^{q^2}-\theta \bigr) (0,0,1,\ozero), \\
M(0,2q-1,\ozero) &=
  (0,1,\ozero) + \bigl(Y^{q^2} - \theta \bigr) (0,0,1,\ozero)
   + (Y^q-\theta)(0,q-1,1,\ozero), \\
M(q^2,\ozero) &= \begin{aligned}[t]
  (1,\,&\ozero) + \bigl( \bigl(Y^{q}-\theta \bigr) + \bigl(Y^{q}-\theta \bigr)^q \bigr) (0,1,\ozero)\\
  &{} + \bigl( Y^{q}-\theta \bigr)^q \bigl(Y^{q^2} - \theta \bigr) (0,0,1,\ozero),
  \end{aligned} \\
M(0,q^2,\ozero) &= \begin{aligned}[t]
  (0,\, &1,\ozero) + \bigl( \bigl(Y^{q^2}-\theta \bigr) + \bigl(Y^{q^2}-\theta \bigr)^q \bigr) (0,0,1,\ozero)\\
  &{} + \bigl( Y^{q^2}-\theta \bigr)^q \bigl(Y^{q^3} - \theta \bigr) (0,0,0,1,\ozero).
  \end{aligned}
\end{align*}
For $n \geqslant 0$,
\[
  M(q^n, \ozero) = (1, \ozero)
  + \sum_{k=1}^n \Biggl( \sum_{0 \leqslant j_k < \cdots < j_1 \leqslant n}
  \bigl( Y^{q}-\theta \bigr)^{q^{j_1}} \cdots \bigl(Y^{q^k}-\theta \bigr)^{q^{j_k}} \Biggr)
  (\underbrace{0, \ldots,0}_{k}, 1,\ozero).
\]
\end{example}

For $\bw = (w_k) \in \cS$, we set
\begin{equation} \label{E:ewdef}
  e(\bw) \assign \prod_{k \geqslant 0} E_k^{w_k},
\end{equation}
and we extend it $A[Y]$-linearly to a map $e : \cW \to K[z]$ by setting also $Y=\theta$.  We note that if $\bw \in \cS$ is reduced, then $e(\bw) = G_{\wt(\bw)}$.  Thus for $\bx = \sum_{\bw \in \cS} a_{\bw} \bw \in \cW$, with $a_{\bw} \in A[Y]$, if each $\bw$ in the support of $\bx$ is reduced, then
\begin{equation} \label{E:xexpansion}
  e(\bx) = \sum_{\bw \in \cS} a_{\bw}(\theta) G_{\wt(\bw)}.
\end{equation}
The motivation for this entire construction is to obtain identities for $\epsilon_{\uell,j}$  (see~\eqref{E:epsellj}) by combining \eqref{E:xexpansion} with the following lemma, which itself follows directly from~\cite[Prop.~8(3)]{PellarinPerkins20}.

\begin{lemma} \label{L:estabilize}
For any $\bx \in \cW$, we have
\[
  e(\bx) = e(m(\bx)) = e(M(\bx)).
\]
\end{lemma}

It will be useful to have some additional operators, inspired by the calculations in Example~\ref{Ex:Mexamples}.  We define shift operators $\alpha$, $\beta : \cS \to \cS$ so that
\[
\alpha(w_0, w_1, \ldots) \assign (0, w_0, w_1, \ldots), \quad
\beta(w_0, w_1, \ldots) \assign (w_1, w_2, \ldots),
\]
and then extend $A[Y]$-linearly to $\alpha$, $\beta : \cW \to \cW$.  Certainly, for all $n \geqslant 0$, we have $\beta^n \circ \alpha^n = \Id_{\cW}$, and if for $\bw \in \cS$, $w_0= \cdots = w_{n-1}=0$, then $\alpha^n \circ \beta^n(\bw) =\bw$.  We note that for $\bw \in \cS$ and $n \geqslant 0$,
\begin{equation} \label{E:Malphaw}
  M( \alpha^n(\bw)) = \alpha^n \bigl( M(\bw)\big|_{Y \leftarrow Y^{q^n}} \bigr) = \alpha^n \bigl( M(\bw) \bigr)\big|_{Y \leftarrow Y^{q^n}}.
\end{equation}
A word of caution is that this identity does not extend $A[Y]$-linearly to all of $\cW$, though it will not impact us here.  If $w_0 = \cdots = w_{n-1}=0$, then it also follows that
\begin{equation} \label{E:Mbetaw}
   M(\beta^n(\bw)) = \beta^n \bigl( M(\bw) \big|_{Y \leftarrow Y^{1/q^n}} \bigr) = \beta^n \bigl( M(\bw) \bigr) \big|_{Y \leftarrow Y^{1/q^n}}.
\end{equation}

Furthermore, we let $\cI \assign \cup_{\sigma \geqslant 1} \ZZ_{\geqslant 0}^{\sigma}$, taken as a disjoint union, and define $\xi : \cI \to \cS$ by setting
\[
  \xi(\ell_1, \dots, \ell_{\sigma}) \assign (\xi_0, \xi_1, \ldots),
\]
where $\xi_k \assign \# \{ i \mid \ell_i = k \}$.  (Previously we had assumed $\sigma \equiv 1 \pmod{q-1}$, but we do not need to continue with that assumption until later.)  Our main computational tool is then the following.  For $\uell \in \cI$ and for $j \geqslant 0$, expressed as $j = j_0 + j_1 q + \dots + j_u q^u$, $0 \leqslant j_i \leqslant q-1$, we set
\[
  m_{\uell,j}(Y) \assign \textup{coefficient of $(j_0, j_1, \ldots)$ in $M(\xi(\uell))$,}
\]
which is an element of $A[Y]$.  It then follows from \eqref{E:xexpansion} and Lemma~\ref{L:estabilize} that
\begin{equation} \label{E:epsellj}
  \epsilon_{\uell,j} = m_{\uell,j}(\theta).
\end{equation}
As it turns out, the polynomials $m_{\uell,j}(Y)$ are the same as another class of polynomials $c_{\uell,j}(Y) \in A[Y]$ defined in~\cite{PellarinPerkins20}.

\begin{lemma}[{Pellarin-Perkins~\cite[Lem.~9]{PellarinPerkins20}}] \label{L:Lemma9}
Let $\uell \in \cI$.  For each $j \geqslant 0$, there exists a polynomial $c_{\uell,j}(Y) \in A[Y]$, all but finitely many of which are non-zero, so that for all $n \geqslant 0$,
\[
  e(\alpha^n(\xi(\uell))) = \sum_{j \geqslant 0} c_{\uell,j}\bigl( \theta^{q^n} \bigr) G_{jq^n}.
\]
\end{lemma}

\begin{corollary} \label{C:mandc}
With notation as above, $m_{\uell,j}(Y) = c_{\uell,j}(Y)$.
\end{corollary}

\begin{proof}
It suffices to show that $m_{\uell,j}(Y)$ and $c_{\uell,j}(Y)$ agree at $Y = \theta^{q^n}$ for all $n \geqslant 0$.  By~\eqref{E:Eproddec} and~\eqref{E:epsellj}, we see that $m_{\uell,j}(\theta) = c_{\uell,j}(\theta)$.  Moreover, for $n \geqslant 1$, letting $\uell + n$ denote the tuple $(\ell_1 + n, \ldots, \ell_{\sigma}+n)$ and using~\eqref{E:Malphaw} and Lemma~\ref{L:Lemma9}, we have
\begin{align} \label{E:cmequal}
c_{\uell,j}\bigl(\theta^{q^n} \bigr) &= \epsilon_{\uell+n,jq^n} \\
&= \textup{coefficient of $\alpha^n(j_1, \ldots, j_u, \ozero)$ in $M(\alpha^n(\xi(\uell)))$ with $Y=\theta$,}
\notag \\
&= \textup{coefficient of $(j_0, \ldots, j_u, \ozero)$ in $M(\xi(\uell))$ with $Y= \theta^{q^n}$,} \notag \\
&= m_{\uell,j} \bigl(\theta^{q^n} \bigr). \notag
\end{align}
\end{proof}

Henceforth we will use $c_{\uell,j}(Y)$ exclusively to denote $m_{\uell,j}(Y)$.  To complete our calculations for \eqref{E:Fsigmasum1}--\eqref{E:Fsigmasum2}, we have the following lemma, whose proof is a variant of the proof of Lemma~\ref{L:Lemma9} in~\cite{PellarinPerkins20}.

\begin{lemma} \label{L:finaldivisibility}
Let $\uell \in \cI$, and suppose $\xi(\uell) = (\xi_0, \xi_1, \ldots)$ with $\xi_0 = \cdots = \xi_{k_0-1} = 0$ and $\xi_{k_0}\neq 0$ for some $k_0 \geqslant 0$.  Then for any $k > k_0$,
\[
\bigl(Y^{q^{k_0+1}}-\theta \bigr) \bigl(Y^{q^{k_0+2}}-\theta \bigr) \cdots \bigl(Y^{q^k}-\theta \bigr) \quad
\textup{divides} \quad c_{\uell,q^k}(Y)
\]
in $A[Y]$.
\end{lemma}

\begin{proof}
For $k \geqslant 0$, let $\bs_k \in \cS$ be given by $\bs_k \assign (0, \ldots, 0, 1, \ozero)$ where the~$1$ is in the $k$-th entry of $\bs_k$.  The key thing to keep to in mind is that by the definition of $m_{\uell,q^k}$, it follows that $c_{\uell,q^k}(Y)$ is the coefficient of $\bs_k$ in $M(\xi(\uell))$.  We proceed by induction on the size $\sz(\xi(\uell)) = \sum_i \xi_i$.  If $\sz(\xi(\uell)) = 1$, then by the definition of $k_0$ we have $\xi(\uell) = \bs_{k_0}$, and also $M(\xi(\uell)) = \xi(\uell)$.  Thus $c_{\uell,q^k}(Y) = 0$ identically for every $k > k_0$.

Now suppose $\sz(\xi(\uell)) \geqslant 2$ and that the statement holds for elements of size $< \sz(\xi(\uell))$.  If $1 \leqslant \xi_{k_0} \leqslant q-1$, then by the moves of our game, every element $\bw \in \cS$ in the support of $M(\xi(\uell))$ starts as
\[
  \bw = (\underbrace{0, \ldots, 0}_{k_0}, \xi_{k_0}, \ldots),
\]
and so we have $c_{\uell,q^k}(Y)=0$ identically for all $k > k_0$.

Therefore, suppose that $\xi_{k_0} \geqslant q$, and apply $m$ to $\xi(\uell)$:
\begin{align} \label{E:mxiell}
  m(\xi(\uell)) &= \begin{aligned}[t]
    \bigl(\underbrace{0,\ldots,0}_{k_0},\ &\xi_{k_0} - (q-1), \xi_{k_0+1}, \ldots \bigr) \\
  &{} + \bigl(Y^{q^{k_0+1}} - \theta \bigr) \bigl( \underbrace{0, \ldots, 0}_{k_0}, \xi_{k_0} - q, \xi_{k_0+1}+1, \ldots \bigr)
  \end{aligned} \\
  &\rightassign \by + \bigl(Y^{q^{k_0+1}} - \theta \bigr)\bz. \notag
\end{align}
As noted in~\eqref{E:szwt}, the sizes of each term on the right is $\sz(\xi(\uell)) - (q-1)$, so we can apply our induction hypothesis.  Since $\xi_{k_0}-(q-1) \geqslant 1$, the induction hypothesis applied to $\by$ implies that for $k > k_0$, the coefficient of $\bs_k$ in $M(\by)$ is divisible by $(Y^{q^{k_0+1}} - \theta) \cdots (Y^{q^k}-\theta)$.

We consider $M(\bz)$.  If $\xi_{k_0} > q$, then the argument in the previous paragraph holds as well for $M(\bz)$.
So finally we suppose that $\xi_{k_0}=q$.  If $k = k_0+1$, then there is nothing to show since the $Y^{q^{k_0+1}}-\theta$ in front in~\eqref{E:mxiell} is the divisor we want and it persists throughout all calculations of $M(\xi(\uell)) = M(m(\xi(\uell)))$.  On the other hand, if $k > k_0 + 1$, since $\xi_{k_0+1}+1 \neq 0$ we can apply induction with $k_0$ replaced by $k_0+1$, and so the coefficient of $\bs_k$ in $M(\bz)$ is divisible by $(Y^{q^{k_0+2}} - \theta) \cdots (Y^{q^k}-\theta)$.  Again adding on the $Y^{q^{k_0+1}}-\theta$ term from~\eqref{E:mxiell}, we are done.
\end{proof}

Using Lemma~\ref{L:finaldivisibility} together with \eqref{E:Fsigmasum1}--\eqref{E:Fsigmasum2}, we now derive identities for $S_{s,i}$ for $i \geqslant 0$.  As earlier in the section we let $\sigma = 1 + \rho(q-1)$ with $\rho \geqslant 2$ and assume $s < \sigma$.  Since $S_{s,i}$ is the coefficient of $t_{s+1}^i \cdots t_{\sigma}^i$ in $F_{\sigma,i+1}$, we see from~\eqref{E:Fsigmasum1} that
\[
  S_{s,i} = - \sum_{h=0}^{\lfloor \log_q(\sigma) \rfloor - 1} \frac{1}{L_{i+1+h}}
  \sum_{\uell \leqslant i} \epsilon_{(\uell,\oi),q^{i+1+h}}
  \mu_{\ell_1}(t_1) \cdots \mu_{\ell_s}(t_s),
\]
where $\uell \in \ZZ_{\geqslant 0}^s$ and $\oi$ represents a string of $i$'s repeated $\sigma-s$ times (so $(\uell,\oi) \in \ZZ_{\geqslant 0}^{\sigma}$).  As we saw in~\eqref{E:Fsigmasum2}, if $i - \rho + 1 \not\leqslant \uell$, then $\epsilon_{(\uell,\oi),q^{i+1+h}} = 0$.  Thus if we let $\delta(\uell)$ denote the minimum of the entries of $\uell$, then for all $i \geqslant 0$,
\begin{equation} \label{E:Ssiintermediate}
  S_{s,i} = - \sum_{h=0}^{\lfloor \log_q(\sigma) \rfloor - 1} \frac{1}{L_{i+1+h}}
  \sum_{\nu=\max(1,\rho-i)}^\rho\; \sum_{\substack{i -\rho + \nu \leqslant \uell \leqslant i \\ \delta(\uell) = i-\rho+\nu}} \epsilon_{(\uell,\oi),q^{i+1+h}} \mu_{\ell_1}(t_1) \cdots \mu_{\ell_s}(t_s).
\end{equation}
If $\delta(\uell) = i - \rho + \nu \geqslant 0$, then Lemma~\ref{L:Lemma9} with $n = i - \rho + \nu$ yields (as in~\cite[\S 2.1.2]{PellarinPerkins20}, and see also~\eqref{E:cmequal} above)
\begin{equation} \label{E:epstoc}
  \epsilon_{(\uell,\oi),q^{i+1+h}} = c_{(\ell_1 - i + \rho - \nu, \ldots, \ell_s - i + \rho - \nu,\rho - \nu, \ldots, \rho - \nu),q^{h+\rho-\nu+1}}(Y)|_{Y = \theta^{q^{i-\rho + \nu}}}.
\end{equation}
This prompts the following definition.  For $1 \leqslant \nu \leqslant \rho$, suppose $\ueta \in \ZZ_{\geqslant 0}^{s}$ satisfies $0 \leqslant \ueta \leqslant \rho - \nu$ with $\delta(\ueta) = 0$.  Then for $0 \leqslant h \leqslant \lfloor \log_q(s) \rfloor -1$, Lemma~\ref{L:finaldivisibility} enables us to define a polynomial in~$A[Y]$,
\begin{equation} \label{E:Deltadef}
  \Delta_{\ueta,\nu,h}(Y) \assign \frac{c_{(\ueta,\rho - \nu, \ldots \rho - \nu),q^{h+\rho-\nu+1}}(Y)}{(Y^q - \theta) \cdots (Y^{q^{h+\rho-\nu+1}} - \theta)} \in A[Y].
\end{equation}
Moreover, for $\uell = (\ell_1, \dots, \ell_s) \leqslant i$, for $1 \leqslant \nu \leqslant \rho$ with $\delta(\uell) = i - \rho + \nu \geqslant 0$, and for $0 \leqslant h \leqslant \lfloor \log_q(\sigma) \rfloor - 1$, we let $\eta_i(\uell) \assign \uell - (i-\rho+\nu) = (\ell_1-i+\rho-\nu, \dots, \ell_s - i + \rho -\nu) \in \{0,\dots, \rho-\nu\}^{s}$.  We then find from~\eqref{E:epstoc} that
\[
  -\frac{\epsilon_{(\uell,\oi),q^{i+1+h}}}{L_{i+1+h}} = (-1)^{h+\rho-\nu} \frac{ \Delta_{\eta_i(\uell),\nu,h}\bigl( \theta^{q^{i-\rho+\nu}} \bigr)}{L_{i - \rho + \nu}}.
\]
Combining this identity with~\eqref{E:Ssiintermediate} we prove the following lemma, after which we can finally prove Theorem~\ref{T:Anderson}.

\begin{lemma} \label{L:Ssi}
Let $\sigma = 1+\rho(q-1)$ with $\rho \geqslant 2$, and let $s < \sigma$.  Then for $i \geqslant 0$,
\begin{align*}
  S_{s,i} &= \sum_{h=0}^{\lfloor \log_q(\sigma) \rfloor -1} \sum_{\nu=\max(1,\rho - i)}^{\rho} (-1)^{h+\rho-\nu}
  \sum_{\substack{i - \rho + \nu \leqslant \uell \leqslant i \\ \delta(\uell) = i - \rho + \nu }}
  \frac{\Delta_{\eta_i(\uell),\nu,h}\bigl( \theta^{q^{i-\rho+\nu}} \bigr)}{L_{i-\rho+\nu}} \cdot
  \mu_{\ell_1}(t_1) \cdots \mu_{\ell_s}(t_s), \\
  &= \begin{aligned}[t]
    \sum_{h=0}^{\lfloor \log_q(\sigma) \rfloor -1} \sum_{\nu=\max(1,\rho - i)}^{\rho} (-1)^{h+\rho-\nu}
    \sum_{\substack{0 \leqslant \ueta \leqslant \rho -\nu \\ \delta(\ueta) = 0}}
    &\frac{\Delta_{\ueta,\nu,h}\bigl( \theta^{q^{i-\rho+\nu}} \bigr)}{L_{i-\rho+\nu}} \\
    &{} \times \mu_{\eta_1+i-\rho+\nu}(t_1) \cdots \mu_{\eta_s + i - \rho + \nu}(t_s).
  \end{aligned}
\end{align*}
\end{lemma}

\begin{proof}[Proof of Theorem~\ref{T:Anderson}]
As in the beginning of the present section, we take $m=s$, and for $i \geqslant 0$, we consider $\lambda_i(s)$ with $\mu_1 = \cdots = \mu_s = 0$ in~\eqref{E:lambdaim} so that $P_s(x,z) = \exp_C(\sum_{i=0}^{\infty} \lambda_i(s) z^{q^i})$.  The formula in~\eqref{E:bracsum} remains valid in this more general case, and we see from~\eqref{E:hypersumred1} and the argument in~\eqref{E:Ssired1} that
\begin{align*}
  \sum_{a \in A_{i+}} \frac{\pd_{\theta}^{j_1}(a) \cdots \pd_{\theta}^{j_s}(a)}{a} &=
  \bigl( \pd_{t_1}^{j_1} \circ \cdots \circ \pd_{t_s}^{j_s}\bigr) \bigl( S_{s,i} \bigr) \big|_{t_1=\theta, \ldots, t_s=\theta} \\
  &=\begin{aligned}[t]
  \sum_{h=0}^{\lfloor \log_q(\sigma) \rfloor -1} &\sum_{\nu=\max(1,\rho - i)}^{\rho} (-1)^{h+\rho-\nu}
  \sum_{\substack{i - \rho + \nu \leqslant \uell \leqslant i \\ \delta(\uell) = i - \rho + \nu }}
  \frac{\Delta_{\eta_i(\uell),\nu,h}\bigl( \theta^{q^{i-\rho+\nu}} \bigr)}{L_{i-\rho+\nu}} \\
  &{}\times \prod_{r=1}^s (-1)^{\ell_r-j_r} e_{\ell_r-1,\ell_r-j_r}([1], \dots, [\ell_r-1]).
  \end{aligned}
\end{align*}
We note that $e_{\ell_r-1,\ell_r-j_r}=0$ unless $j_r \leqslant \ell_r \leqslant i$, and so when we substitute this expression into~\eqref{E:bracsum} and reorder the sum, we obtain an interior sum of the form
\begin{multline*}
  \prod_{r=1}^{s} \sum_{j_r=k_r}^{\ell_r} (-1)^{\ell_r-j_r} e_{\ell_r-1,\ell_r-j_r}([1],\dots, [\ell_r-1]) \cdot h_{k_r,j_r-k_r}([1], \dots, [k_r])\\
  = \begin{cases}
  1 & \textup{if $k_1=\ell_1, \ldots, k_s=\ell_s$,} \\
  0 & \textup{otherwise,}
  \end{cases}
\end{multline*}
where the equality follows from Proposition~\ref{P:symmrec2}.  Combining all of this with~\eqref{E:lambdaim} and the definition of $\eta_i(\uell)$, we obtain
\begin{align*}
  \lambda_i(s) &= \sum_{h=0}^{\lfloor \log_q(\sigma) \rfloor -1} \sum_{\nu=\max(1,\rho - i)}^{\rho} (-1)^{h+\rho-\nu}
  \sum_{\substack{i - \rho + \nu \leqslant \uell \leqslant i \\ \delta(\uell) = i - \rho + \nu }}
  \frac{\Delta_{\eta_i(\uell),\nu,h}\bigl( \theta^{q^{i-\rho+\nu}} \bigr)}{L_{i-\rho+\nu}} \cdot x^{q^{\ell_1} + \cdots + q^{\ell_s}} \\
  &= \sum_{h=0}^{\lfloor \log_q(\sigma) \rfloor -1} \sum_{\nu=\max(1,\rho - i)}^{\rho} (-1)^{h+\rho-\nu}
  \sum_{\substack{0 \leqslant \ueta \leqslant \rho -\nu \\ \delta(\ueta) = 0 }}
  \frac{\Delta_{\ueta,\nu,h} \bigl(\theta^{q^{i-\rho+\nu}} \bigr)}{L_{i -\rho + \nu}} \cdot \bigl( x^{q^{\eta_1} + \dots + q^{\eta_s}} \bigr)^{q^{i-\rho + \nu}}.
\end{align*}
By reordering the sum and making the substitution $i \leftarrow i + \rho -\nu$, we find
\[
\sum_{i = 0}^{\infty} \lambda_i(s) z^{q^i} =
\sum_{h=0}^{\lfloor \log_q(\sigma) \rfloor -1} \sum_{\nu=1}^{\rho} (-1)^{h+\rho-\nu}
  \sum_{\substack{0 \leqslant \ueta \leqslant \rho -\nu \\ \delta(\ueta) = 0 }}\; \sum_{i \geqslant 0}
  \frac{\Delta_{\ueta,\nu,h} \bigl(\theta^{q^{i}} \bigr)}{L_{i}} \cdot \bigl( x^{q^{\eta_1} + \dots + q^{\eta_s}} \bigr)^{q^{i}} \bigl(z^{q^{\rho-\nu}}\bigr)^{q^i}.
\]
If $\Delta_{\ueta,\nu,h}(Y) = \sum_{k=0}^d a_{\ueta,\nu,h,k}Y^k$ with $a_{\ueta,\nu,h,k} \in A$, where $d$ is taken to be the maximum of the degrees in $Y$ of the polynomials $\Delta_{\ueta,\nu,h}(Y)$ that appear in the sum above, then this becomes
\begin{align} \label{E:finalstep}
\sum_{i=0}^{\infty} \lambda_i(s) z^{q^i}
  =\sum_{h=0}^{\lfloor \log_q(\sigma) \rfloor -1} &\sum_{\nu=1}^{\rho} (-1)^{h+\rho-\nu}
  \sum_{\substack{0 \leqslant \ueta \leqslant \rho -\nu \\ \delta(\ueta) = 0 }} \\
  &{}\times \sum_{k=0}^d
  a_{\ueta,\nu,h,k} \log_C \Bigl( \theta^k \cdot x^{q^{\eta_1} + \dots + q^{\eta_s}} \cdot z^{q^{\rho-\nu}}\Bigr), \notag
\end{align}
and the result follows upon exponentiation.
\end{proof}

\begin{example}
We demonstrate elements of the proof of Theorem~\ref{T:Anderson} in the case $s = q+1$ for $q>2$.  We take $\sigma = 2q-1$, and so $\rho = 2$.  We note that $h=0$ throughout this calculation.  For $\nu = 2$ and for $\ueta = (0,\dots, 0) \in \ZZ_{\geqslant 0}^{q+1}$, we see that $\xi(\ueta,0, \ldots, 0) = (2q-1,\ozero) \in \cS$, and the calculations in Example~\ref{Ex:Mexamples} together with \eqref{E:Deltadef} show that
\[
  \Delta_{(0,\ldots,0),2,0}(Y) = 1.
\]
Likewise for $\nu=1$ and for $\ueta = (0,\dots,0,1) \in \ZZ_{\geqslant 0}^{q+1}$, we see that $\xi(\ueta,1, \ldots, 1) = (q,q-1,\ozero)\in \cS$.  Similar to the calculations in Example~\ref{Ex:Mexamples}, we find
\[
  M(q,q-1,\ozero) = (1,q-1,\ozero) + (Y^q-\theta)(0,1,\ozero) + (Y^q-\theta)(Y^{q^2}-\theta)(0,0,1,\ozero),
\]
from which we find using the coefficient of $(0,0,1,\ozero)$ and \eqref{E:Deltadef} that also
\[
  \Delta_{(0,\ldots,0,1),1,0}(Y) = 1.
\]
We can permute $(0,\ldots,0,1)$ in $q+1$ ways and obtain the same polynomial, but other than that there are no other non-zero $\Delta_{\ueta,\nu,0}(Y)$ in this case.  Then~\eqref{E:finalstep} becomes
\[
  \sum_{i=0}^{\infty} \lambda_i(q+1) z^{q^i} = \log_C \bigl( x^{q+1} z \bigr) - (q+1)\log_C\bigl( x^{q\cdot 1 + q} z^q \bigr),
\]
and so $P_{q+1}(x,z) = x^{q+1}z - x^{2q}z^q$, which agrees with Example~\ref{Ex:Thakurexamples}.
\end{example}

\begin{example}
We also consider the case $s=2q$ for $q > 2$.  We take $\sigma = 3q-2$, and so $\rho=3$.  As in the previous example $h=0$ throughout.  There are four types of choices of $\ueta \in \ZZ_{\geqslant 0}^{2q}$ that produce non-zero polynomials $\Delta_{\ueta,\nu,0}(Y)$.  For $\nu=3$, we take $\ueta_0 = (0,\ldots, 0)$, for which $\xi(\ueta,0,\ldots,0) = (3q-2,\ozero) \in \cS$.  We calculate
\[
  M(3q-2,\ozero) = (1, \ozero) + (Y^q-\theta)(0,1,\ozero) + 2(Y^q-\theta)(q-1,1,\ozero) + (Y^q-\theta)^2(q-2,2,\ozero),
\]
and using the coefficient of $(0,1,\ozero)$ in \eqref{E:Deltadef} we see that $\Delta_{\smash{\ueta}_0,3,0}(Y) = 1$.

For $\nu=2$, we let $\smash{\ueta}_{a,b} = (0,\dots, 0,1,\dots,1)$, with $0$ given $a$ times and $1$ given $b$ times and $a+b=2q$.  For $\xi(\smash{\ueta}_{q,q}) = (q,2q-2,\ozero)$, we find
\begin{multline*}
  M(q,2q-2,\ozero) = (1,q-1,\ozero) + (Y^{q^2}-\theta)(1,q-2,1,\ozero) + (Y^q-\theta)(0,1,\ozero) \\
   {}+ (Y^q-\theta)(Y^{q^2}-\theta)(0,0,1,\ozero) + (Y^q-\theta)^2(0,q-1,1,\ozero),
\end{multline*}
and using the coefficient of $(0,0,1,\ozero)$ in \eqref{E:Deltadef} we have $\Delta_{\smash{\ueta}_{q,q},2,0}(Y) = 1$.  Similarly, we have $\Delta_{\smash{\ueta}_{2q-1,1},2,0}(Y) = 1$ and $\Delta_{\smash{\ueta}_{2q,0},2,0}(Y) = Y^q-\theta$, but we omit the details.

Each of these four types of $\Delta_{\ueta,\nu,0}(Y)$ polynomials are unchanged by permuting the entries of $\ueta$, but beyond that all other $\Delta_{\ueta,\nu,0}(Y)=0$, including for all $\ueta$ with $\nu=1$.  Taking into account the possible permutations, $\Delta_{\smash{\ueta}_0,3,0}(Y)$ occurs once; $\Delta_{\smash{\ueta}_{q,q},2,0}(Y)$ occurs $\binom{2q}{q} \equiv 2 \pmod{p}$ times; $\Delta_{\smash{\ueta}_{2q-1,1},2,0}(Y)$ occurs $\binom{2q}{1} \equiv 0 \pmod{p}$ times; and finally, $\Delta_{\smash{\ueta}_{2q,0},2,0}(Y)$ occurs once.  Assembling all of this information into~\eqref{E:finalstep}, we have
\[
  \sum_{i=0}^{\infty} \lambda_i(2q) z^{q^i} = \log_C \bigl(x^{2q}z\bigr) - 2 \log_C \bigl(x^{q^2+q}z^q \bigr)
  + 0 + \theta \log_C\bigl( x^{2q}z^q \bigr) - \log_C \bigl( \theta^q x^{2q}z^q \bigr).
\]
After exponentiating and simplifying, we find
\begin{align*}
  P_{2q}(x,z) &= x^{2q}z - 2 x^{q^2+q}z^q + C_{\theta} \bigl(x^{2q}z^q \bigr) - \theta^{q} x^{2q} z^q \\
  &= x^{2q}z - \bigl(2x^{q^2+q} - \theta x^{2q} + \theta^q x^{2q} \bigr)z^q + x^{2q^2} z^{q^2},
\end{align*}
which agrees with Example~\ref{Ex:Thakurexamples}.
\end{example}

\end{document}